\documentclass[11pt,leqno,a4paper]{amsart}

\usepackage[margin=25mm]{geometry}

\usepackage[title]{appendix}
\usepackage{amssymb}
\usepackage{amsmath}
\usepackage{amsthm}
\usepackage{tikz}
\usepackage{tikz-cd}
\usepackage{mathtools}
\usepackage{framed}
\usepackage{comment}
\usepackage{booktabs}
\definecolor{darkblue}{rgb}{0,0,0.6}
\usepackage[breaklinks, ocgcolorlinks,colorlinks=true, citecolor=darkblue, filecolor=darkblue, linkcolor=darkblue, urlcolor=darkblue]{hyperref}
\usepackage[capitalize]{cleveref}

\newcommand{\Q}{\mathbb{Q}}
\newcommand{\R}{\mathbb{R}}

\newcommand{\Z}{\mathbb{Z}}
\renewcommand{\H}{\mathbb{H}}
\newcommand{\F}{\mathbb{F}}

\renewcommand{\mod}{\,\,\text{\normalfont mod}\,}

\DeclareMathOperator{\Aut}{Aut}

\DeclareMathOperator{\id}{id}

\DeclareMathOperator{\image}{image}

\DeclareMathOperator{\IM}{im}

\DeclareMathOperator{\Ker}{Ker}
\DeclareMathOperator{\coker}{coker}

\DeclareMathOperator{\GL}{GL}

\DeclareMathOperator{\ord}{ord}

\DeclareMathOperator{\Inn}{Inn}
\DeclareMathOperator{\Out}{Out}
\DeclareMathOperator{\SF}{SF}

\DeclareMathOperator{\HT}{HT}

\DeclareMathOperator{\FF}{F}
\DeclareMathOperator{\LL}{\Lambda}

\DeclareMathOperator{\per}{period}
\DeclareMathOperator{\lcm}{lcm}
\DeclareMathOperator{\colim}{colim}
\DeclareMathOperator{\hAut}{hAut}

\newtheorem{thm}{Theorem}[section]

\newtheorem*{thm*}{Theorem}
\newtheorem{prop}[thm]{Proposition}
\newtheorem*{prop*}{Proposition}
\newtheorem{lemma}[thm]{Lemma}
\newtheorem{corollary}[thm]{Corollary}
\newtheorem*{corollary*}{Corollary}

\newtheorem{question}[thm]{Question}
\newtheorem*{question*}{Question}

\newtheorem*{problem*}{Problem}

\newcommand{\customthmname}{}
\newtheorem{innercustomthm}[thm]{\customthmname}

\newenvironment{customtheorem}[1]
  {\renewcommand{\customthmname}{#1}\begin{innercustomthm}}
  {\end{innercustomthm}}

\newtheorem{thmx}{Theorem}

\theoremstyle{definition}

\newtheorem*{theorem*}{Theorem}

\theoremstyle{remark}
\newtheorem{remark}[thm]{Remark}
\newtheorem*{remark*}{Remark}
\newtheorem{construction}[thm]{Construction}

\newtheoremstyle{custom}{}{}{\itshape}{}{\bfseries}{.}{.5em}{\thmnote{#3}#1}
\theoremstyle{custom}

\usepackage{rotating}

\usepackage{color}
\usepackage{enumerate}

\usepackage{cite}
\usepackage[nodisplayskipstretch]{setspace}
\usepackage{placeins}

\usepackage{listings}
\lstdefinestyle{mystyle}{
    basicstyle=\ttfamily\footnotesize,
    breakatwhitespace=false,         
    breaklines=true,                 
    captionpos=b,                    
    keepspaces=true,                                   
    showspaces=false,                
    showstringspaces=false,
    showtabs=false,                  
    tabsize=2
}
\lstset{style=mystyle}

\renewcommand{\l}{\Lambda}

\newenvironment{clist}[1]
{\begin{enumerate}[\normalfont #1]}
{\end{enumerate}}

\newcommand{\wh}{\widehat}
\newcommand{\wt}{\widetilde}

\usepackage{mathrsfs}
\usepackage{adjustbox}
\usepackage{nccmath}
\usepackage{esvect}

\makeatletter
\@namedef{subjclassname@2020}{%
  \textup{2020} Mathematics Subject Classification}
\makeatother

\usetikzlibrary{decorations.markings}

\usepackage{array}

\newcolumntype{P}[1]{>{\centering\arraybackslash}m{#1}}

\begin{document}

\title{Swan modules and homotopy types after a single stabilisation}

\author{Tommy Hofmann} 
\address{Naturwissenschaftlich-Technische Fakult\"{a}t, Universit\"{a}t Siegen,  Walter-Flex-Strasse \newline \indent 3, 57068 Siegen, Germany}
\email{tommy.hofmann@uni-siegen.de}

\author{John Nicholson}
\address{School of Mathematics and Statistics, University of Glasgow, United Kingdom}
\email{john.nicholson@glasgow.ac.uk}

\subjclass[2020]{Primary 55P15; Secondary 20C05, 57Q12}


\begin{abstract}
We study Swan modules, which are a special class of projective modules over integral group rings, and their consequences for the homotopy classification of CW-complexes.
We show that there exists a non-free stably free Swan module, thus resolving Problem A4 in the 1979 Problem List of C.\,T.\,C. Wall.
As an application we show that, in all dimensions $n \equiv 3$ mod $4$, there exist finite $n$-complexes which are homotopy equivalent after stabilising with multiple copies of $S^n$, but not after a single stabilisation. This answers a question of M.\,N. Dyer. 

We also resolve a question of S. Plotnick concerning Swan modules associated to group automorphisms and, as an application, obtain a short and direct proof that there exists a group with $k$-periodic cohomology which does not have free period $k$.
In contrast to the original proof of R.\,J.~Milgram, our proof circumvents the need to compute the Swan finiteness obstruction.
\end{abstract}

\maketitle

\vspace{-5mm}

\section{Introduction}

A \textit{Swan module} over a finite group $G$ is a $\Z G$-module
defined by a left ideal $(N,r) \le \Z G$ where $N = \sum_{g \in G} g$ and $r \in \Z$ is an integer coprime to $|G|$. 
These modules were introduced by Swan \cite{Sw60-II}, who showed that $(N,r)$ is projective and is determined up to isomorphism by the value of $r \bmod |G|$. Consequently, we often write $r \in (\Z/|G|)^\times$.
Swan modules play an important role in topology through the classification of spherical space forms \cite{Sw60,TW71,LT73} and more general CW-complexes \cite{Jo03a, Ni20b, Ni20a}.
They have also found applications in algebraic number theory, particularly through determining the Galois module structure of the ring of integers of a number field \cite{Ta81,Ta84,GRRS99}.

A classical question, which arose in work of Dyer \cite[Note (c) p276]{Dy76} and Plotnick \cite[p98]{Pl82}, is whether every stably free Swan module is free.
A finite group $G$ for which this holds is said to have the \textit{weak cancellation property} \cite[p582]{Jo04}.
The case where $G$ has periodic cohomology, which is most relevant for applications to topology, appeared in Wall's problem list \cite[Problem A4]{Wa79} (see also \cite[Section 2]{Jo04}, \cite[Question 7.2]{Ni20a}).
We will show:

\begin{thmx} \label{thmx:SF-vs-F}
There exists a finite group $G$ and $r \in (\Z/|G|)^\times$ such that $(N, r)$ is a non-free stably free $\Z G$-module. Furthermore, $G$ can be taken to have periodic cohomology.
\end{thmx}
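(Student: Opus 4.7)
The plan is to reformulate the existence of a non-free stably free Swan module as the strictness of an explicit containment between two subgroups of $(\Z/|G|)^\times$, and then to establish this strictness for a judiciously chosen small finite group with periodic cohomology. Concretely, the isomorphism class of $(N, r)$ depends only on the coset of $r$ in $(\Z/|G|)^\times / U(G)$, where $U(G)$ is the subgroup of units $r$ for which $(N, r) \cong \Z G$; meanwhile the stable isomorphism class is captured by the Swan map $S \colon (\Z/|G|)^\times \to \wt{K}_0(\Z G)$, $r \mapsto [(N, r)] - [\Z G]$. Thus $(N, r)$ is free iff $r \in U(G)$ and stably free iff $r \in \ker(S)$; since $U(G) \subseteq \ker(S)$ automatically, the weak cancellation property for $G$ is equivalent to $U(G) = \ker(S)$, and Theorem~A reduces to exhibiting a periodic-cohomology $G$ with $U(G) \subsetneq \ker(S)$ and a witness $r \in \ker(S) \setminus U(G)$.

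For the choice of $G$, I would look among the classical sources of finite groups with periodic cohomology, most plausibly one of the Milnor $Q$-groups $Q(8a, b, c)$ or the binary polyhedral groups. The Wedderburn decomposition of $\Q G$ for such a $G$ contains several totally definite quaternion factors, so the Eichler condition fails; the Dirichlet unit theorem then forces those factors to contribute only torsion units, and so $(\Z G)^\times$ — and hence $U(G)$, which is the image of $(\Z G)^\times$ under the natural map classifying automorphisms of Swan modules — becomes tightly controlled. On the other side, $\ker(S)$ is computable via the classical description of the Swan subgroup $\image(S) \subseteq \wt{K}_0(\Z G)$ as a quotient of $(\Z/|G|)^\times$ by the local reduced-norm image coming from a maximal order $\M \supset \Z G$. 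The structural reason to expect a strict inclusion in some example is precisely that failure of the Eichler condition leaves extra room in $\ker(S)$ which $U(G)$ need not fill.

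The hard part is the final verification. Checking $r \in \ker(S)$ is a finite computation once the Swan subgroup is made explicit. The obstacle is checking $r \notin U(G)$: this requires ruling out the existence of any unit of $\Z G$ whose augmentation-type invariant equals $r$. My plan is to reduce this to a finite linear-algebra problem over $\Z/|G|$ by combining a complete list of generators of $(\Z G)^\times$ modulo a subgroup of controlled augmentation (torsion units from $G$, Bass cyclic units, and Dirichlet-theorem generators for the Eichler-good Wedderburn factors), and then computing the subgroup they generate inside $(\Z/|G|)^\times$. For the group sizes likely needed to exhibit the phenomenon, this verification is expected to require computer-algebra support. Everything else in the proof is essentially a formal consequence of Swan's theorem and the structure theory of finite groups with periodic cohomology.
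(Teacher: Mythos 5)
Your reformulation is exactly the paper's: writing $\FF(G)$ for the units $r$ with $(N,r)$ free and $\SF(G)=\ker(S)$ for the stably free ones, the task is to exhibit a periodic-cohomology $G$ with $\FF(G)\subsetneq\SF(G)$ and a witness $r$. Your instinct to look among groups failing the Eichler condition is also right; the paper takes $G=Q_{56}$ (a quaternion $2$-group extended by $C_7$, so of periodic cohomology) and $r=15$, and computes $\SF(Q_{8p})$ from Bentzen--Madsen's calculation of the Swan subgroup, which is precisely the reduced-norm/maximal-order description you invoke.

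The genuine gap is in your plan for showing $r\notin\FF(G)$. First, $\FF(G)$ is not an image of $(\Z G)^\times$: by Swan's Lemma 6.3 it is $\IM(\varepsilon\colon(\Z G/N)^\times\to(\Z/|G|)^\times)$, and the surjection $\Z G\to\Z G/N$ does not induce a surjection on units. (Indeed, the augmentation image of $(\Z G)^\times$ in $(\Z/|G|)^\times$ is just $\{\pm1\}$, so taking units of $\Z G$ computes the wrong group entirely.) Second, even after correcting the ring, the proposed generating set --- trivial units, Bass cyclic units, and Dirichlet generators ``for the Eichler-good Wedderburn factors'' --- is not adequate, because the whole point is that the decisive factors are the Eichler-\emph{bad} totally definite quaternion ones, where Bass cyclic units do not give a complete set and where the gluing data between factors is exactly what governs freeness. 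The paper avoids both issues by instead applying Swan's pullback-square machinery: via Milnor patching over the fibre square $\Z Q_{8p}\to\Z D_{4p}\times_{\F_2 D_{4p}}\Lambda$, the freeness of $(N,r)$ is encoded by an explicit double coset in $\Z D_{4p}^\times\backslash\F_2 D_{4p}^\times/\Lambda^\times$, which is then evaluated by computing unit groups of cyclotomic orders (for $p=7$) in a second pullback square for $\Lambda$. That reduction is the key step missing from your outline, and without it the verification that $15\notin\FF(Q_{56})$ does not go through.
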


In particular, we can take $G = Q_{56}$ to be the quaternion group of order $56$ and $r=15$.
To find such an example, one can in principle use algorithms of Bley--Johnston for deciding freeness of $\Z G$-lattices \cite{MR2422318,MR2813368}. Whilst this was our initial approach, these computations did not terminate beyond establishing freeness in certain cases. We therefore devised a heuristic procedure (Algorithm \ref{alg}) which either proves freeness or provides heuristic evidence of non-freeness.
We searched across groups of small order and the algorithm singled out $(N,15)$ over $Q_{56}$ as the unique candidate among the cases for which the search terminated (see \cref{appendix:heuristics}). The rigorous proof
that $(N,15)$ is non-free is then achieved as a result of extensive computations of unit groups of rings which arise in two pullback squares (see \cref{eq:square1,eq:square2}).

We say two finite $n$-complexes $X$ and $Y$ are \textit{stably equivalent} if $X \vee kS^n \simeq Y \vee kS^n$ for some $k \ge 0$. Is one stabilisation always enough, i.e.~does $k=1$ always suffice? This question, with various conditions on the CW-complexes, appeared in \cite[Problem (c)]{Dy79a}, \cite[p124]{HMS93} and \cite[Problem B2]{Ni21b}.
The analogous question for simple homotopy equivalence ($\simeq_s$) appeared in \cite[Question 1]{Dy81}.
Using the examples from \cref{thmx:SF-vs-F}, we will show:

\begin{thmx} \label{thmx:CW}
Let $n \ge 1$ with $n \equiv 3 \mod 4$.
Then there exist finite $n$-complexes $X$ and $Y$ such that $X \vee 2S^n \simeq Y \vee 2S^n$ but $X \vee S^n \not \simeq Y \vee S^n$.
Furthermore, we can assume that $X$ and $Y$ have finite fundamental group, $(n-1)$-connected universal covers, and satisfy $X \vee 2S^n \simeq_s Y \vee 2S^n$.
\end{thmx}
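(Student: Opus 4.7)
The plan is to apply \cref{thmx:SF-vs-F} through the classical correspondence between projective $\Z G$-modules and finite $n$-complexes with fundamental group $G$ of periodic cohomology whose period divides $n+1$. I take $G = Q_{56}$, which has period $4$, together with the stably free non-free Swan module $P := (N, 15)$ produced by \cref{thmx:SF-vs-F}.

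Since $n \equiv 3 \mod 4$, Swan's theory of periodic resolutions produces finite $n$-complexes $X$ with $\pi_1(X) = G$ and $(n-1)$-connected universal cover, classified up to homotopy equivalence by their top homotopy module $\pi_n(X)$, which ranges over projective $\Z G$-modules in a single stable class (the rigidity being due to Swan, Wall, and Dyer). Wedging with $S^n$ adds a free direct summand $\Z G$ to this top module. I would therefore construct $X$ and $Y$ of this form with $\pi_n(X) \cong P$ and $\pi_n(Y) \cong \Z G$, so that
\[
\pi_n(X \vee k S^n) \cong P \oplus (\Z G)^k \quad \text{and} \quad \pi_n(Y \vee k S^n) \cong (\Z G)^{k+1},
\]
and rigidity reduces the question $X \vee k S^n \simeq Y \vee k S^n$ to the module-theoretic question $P \oplus (\Z G)^k \cong (\Z G)^{k+1}$.

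Stable freeness of $P$ yields $P \oplus (\Z G)^2 \cong (\Z G)^3$, so $X \vee 2 S^n \simeq Y \vee 2 S^n$. Conversely, to distinguish $X \vee S^n$ from $Y \vee S^n$ it remains to rule out $P \oplus \Z G \cong (\Z G)^2$. I expect this depth-two stability assertion---strictly stronger than the bare non-freeness statement of \cref{thmx:SF-vs-F}---to be the main obstacle, and to follow by a direct extension of the unit-group calculations in the pullback squares \eqref{eq:square1} and \eqref{eq:square2} already carried out for \cref{thmx:SF-vs-F}, tracking the $\GL_2$ structure rather than just $\GL_1$. The simple homotopy equivalence $X \vee 2 S^n \simeq_s Y \vee 2 S^n$ follows by arranging the witnessing isomorphism $P \oplus (\Z G)^2 \cong (\Z G)^3$ to have trivial Whitehead torsion, using vanishing of the relevant obstruction in $\Wh(G)$.
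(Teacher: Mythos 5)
Your proposal rests on a classification claim that is incorrect, and this undermines the entire argument. You assert that finite $(G,n)$-complexes with $(n-1)$-connected universal cover are ``classified up to homotopy equivalence by their top homotopy module $\pi_n(X)$.'' This is false: by the MacLane--Whitehead theorem (Lemma \ref{lemma:triples} in the paper) the homotopy type is determined by the \emph{triple} $(\pi_1(X), \pi_n(X), k^{n+1}(X))$, and the $k$-invariant $k^{n+1}(X) \in H^{n+1}(G;\pi_n(X))$ is essential. Moreover, when $G$ has free period $k = n+1$, every minimal finite $(G,n)$-complex has $\pi_n \cong \Z$ (and $X \vee k S^n$ has $\pi_n \cong \Z \oplus \Z G^k$); $\pi_n$ is never a non-trivial rank-one projective module like $(N,15)$. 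So your proposed $X$ with $\pi_n(X) \cong P = (N,15)$ does not exist in the relevant class, and even if you adjusted the construction, distinct homotopy types in $\HT_{\min}(G,n)$ have \emph{isomorphic} $\pi_n$ --- they are separated precisely by the $k$-invariant. Indeed the paper's Corollary \ref{cor:ThmB-1} highlights exactly this phenomenon.

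The paper's actual route is the opposite of yours: via Construction \ref{construction:Swan} one builds $X$ and $Y = X_{15}$ with $\pi_n(X) \cong \pi_n(Y) \cong \Z$ and $k$-invariants $1$ and $15 \in (\Z/56)^\times$. The classification of $\HT_{\min}(G,n)$ is by $\SF(G)/{\pm\Aut_k(G)}$, and after one wedge with $S^n$ the classification becomes $\SF(G)/(\Aut_k(G)\cdot \FF(G))$ because $\Aut_{\Z G}(\Z \oplus \Z G)$ contributes units through $\FF(G)$ (Theorem \ref{thm:main-homotopy-classification}). Non-freeness of $(N,15)$, i.e.\ $15 \notin \FF(Q_{56})$, combined with $(N,9)$ being free so that $\Aut_{4i}(Q_{56}) \subseteq \FF(Q_{56})$, is exactly what makes this quotient have size two. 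The stable freeness of $(N,15)$ is used to guarantee $15 \in \SF(Q_{56})$, not to force a cancellation $P \oplus \Z G \not\cong \Z G^2$; that module-theoretic statement is neither needed nor the right obstruction here. Finally, the simple homotopy equivalence $X \vee 2S^n \simeq_s Y \vee 2S^n$ is not obtained by choosing a good Whitehead torsion representative for a module isomorphism --- it is cited from Dyer's stabilization theorem \cite[Theorem 3]{Dy81}, which holds for any stably homotopy equivalent finite complexes with finite fundamental group.
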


The fact that $G$ can be taken to be finite is of interest since it was shown by Browning \cite[Theorem 5.4]{Br78} (see also \cite[Corollary 4.7]{Ni20a}) that, for $n \ge 2$ even, one stabilisation is enough for finite $n$-complexes with finite fundamental group and $(n-1)$-connected universal covers. 
We show that this also holds when $n \equiv 1 \mod 4$ (\cref{thm:all-dimensions}). Hence the dimension restrictions in \cref{thmx:CW} are optimal for finite $n$-complexes satisfying these conditions. 

The examples in \cref{thmx:CW} are obtained by taking $X$ to be the closed $n$-manifold $S^n/Q_{56}$ and defining $Y = X_{15}$ via the following general construction (see \cref{prop:construction-info} for more details):

\begin{construction} \label{construction:Swan}
Let $G$ be a finite group and let $(N,r)$ be a stably free Swan module for some $r \in (\Z/|G|)^\times$. Given a (connected) finite $n$-complex $X$ with fundamental group $G$ and $(n-1)$-connected universal cover, we construct a finite $n$-complex $X_r$ as follows:
\begin{clist}{\hspace{4mm}1.}
\item
Let $m : \Z \to \Z$ be multiplication by a representative of $r^{-1}$, let $\varepsilon : C_0(\wt X) \twoheadrightarrow H_0(\wt X) \cong \Z$ be the standard map and define
\[ m^*(C_*(\wt X)) = (C_n(\wt X) \xrightarrow{\partial_n} \cdots \xrightarrow{\partial_2} C_1(\wt X) \xrightarrow{(\partial_1,0)} C_0(\wt X) \times_{\varepsilon,m} \Z). \]
We have $C_0(\wt X) \times_{\varepsilon,m} \Z \cong (I_G,r^{-1}) \oplus \Z G^k \cong (N,r) \oplus \Z G^k$ for some $k \ge 0$ by \cite[Proposition 3.4 \& Lemma 4.12]{Ni20a}.
This is stably free so, by adding extensions of the form $\Z G \xrightarrow[]{\cong} \Z G$, this is chain homotopy equivalent to an exact sequence $C_*'$ of free $\Z G$-modules.
\item
Since $n \ge 3$, there exists a finite $n$-complex $X_r$ with fundamental group $G$ such that $C_*(\wt X_r)$ is chain homotopy equivalent to $C_*'$ (see \cite[Corollary 8.27]{Jo12a}).
\end{clist}
The complex $X_r$ has fundamental group $G$, $(n-1)$-connected universal cover, $\pi_n(X_r) \cong \pi_n(X)$ as $\Z G$-modules, and $k$-invariant $k^{n+1}(X_r) = r \cdot k^{n+1}(X) \in \Z/|G|$ ($\cong H^{n+1}(G;\pi_n(X))$).
\end{construction}

To prove that $X = S^n/Q_{56}$ and $Y = X_{15}$ are not homotopy equivalent, it remains to show that $1, 15 \in (\Z/56)^\times$ do not coincide under the combined action of $\Aut(Q_{56})$ and $\Aut_{\Z G}(\pi_n(X))$ on the $k$-invariants. Our proof uses that $\pi_n(X) \cong \Z$ and so $\Aut_{\Z G}(\pi_n(X)) = \{\pm \id\}$.
Note that $\Aut_{\Z G}(\pi_n(X))$ is typically hard to compute so, whilst this construction applies to any $X$ and stably free Swan module $(N,r)$, showing that $X_r \not \simeq X$ is difficult in general.

We now give two consequences of \cref{thmx:CW}. The first follows directly from the properties of \cref{construction:Swan} listed above.

\begin{corollary} \label{cor:ThmB-1}
  Let $n > 2$ be even. Then there exist homotopically distinct finite $n$-complexes $X$ and $Y$ with fundamental group $Q_{56}$ and $(n-1)$-connected universal covers such that $\pi_n(X) \cong \pi_n(Y)$ as $\Z Q_{56}$-modules. The same holds for $n = 2$ provided $Q_{56}$ has the {\normalfont D2} property.
\end{corollary}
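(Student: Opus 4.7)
The plan is to apply \cref{construction:Swan} with $G = Q_{56}$ and $r = 15$, using \cref{thmx:SF-vs-F} to provide a non-free stably free Swan module $(N, 15)$. For $n \geq 3$, one produces a finite $n$-complex $X$ with $\pi_1(X) \cong G$ and $(n-1)$-connected universal cover by attaching cells of dimensions $3, \ldots, n$ to a finite presentation $2$-complex of $G$ (killing $\pi_2, \ldots, \pi_{n-1}$ of the universal cover at each stage); for $n = 2$, such an $X$ exists by the D2 property assumption.

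Setting $Y := X_{15}$ as in \cref{construction:Swan}, the listed properties of the construction immediately furnish $\pi_1(Y) \cong G$, $(n-1)$-connected $\wt Y$, and $\pi_n(Y) \cong \pi_n(X)$ as $\Z G$-modules. The entire content of the corollary therefore reduces to verifying that $X \not\simeq Y$ for an appropriately chosen $X$.

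For this we use the fourth property $k^{n+1}(Y) = 15 \cdot k^{n+1}(X)$ in $H^{n+1}(G; \pi_n(X))$. By the standard homotopy classification of finite $n$-complexes with prescribed algebraic $n$-type, a homotopy equivalence $X \simeq Y$ would force $1$ and $15$ to lie in a common orbit of the natural action of $\Aut(G) \times \Aut_{\Z G}(\pi_n(X))$ on $H^{n+1}(G; \pi_n(X))$. It therefore suffices to exhibit an $X$ for which this orbit condition fails.

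The main obstacle is making this choice of $X$ explicit, since the cohomology $H^{n+1}(G; \pi_n(X))$ is sensitive to the residue of $n$ modulo $4$ (using the $4$-periodicity of $G$). A natural approach, paralleling the proof of \cref{thmx:CW}, is to take $X$ for which $\pi_n(X)$ is stably isomorphic to the trivial module $\Z$, so that $\Aut_{\Z G}(\pi_n(X)) = \{\pm\id\}$, and for which $k^{n+1}(X)$ generates a cyclic group of order $|G|$; the remaining question then reduces to exactly the same arithmetic verification as in \cref{thmx:CW}, namely that $15 \not\equiv \pm\alpha(1) \pmod{56}$ for every $\alpha \in \Aut(Q_{56})$.
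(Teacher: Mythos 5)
The overall architecture of your argument is fine: apply \cref{construction:Swan} with $G = Q_{56}$ and $r = 15$ to a suitable finite $(G,n)$-complex $X$, set $Y = X_{15}$, observe that the listed properties give $\pi_n(Y) \cong \pi_n(X)$ with $(n-1)$-connected universal covers, and then show $X \not\simeq Y$ by arguing that $1$ and $15$ lie in different orbits of the action of $\Aut(G)\times\Aut_{\Z G}(\pi_n(X))$ on $k$-invariants. This is indeed how the claim reduces to an orbit computation.

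However, the final step contains a genuine gap. You propose to choose $X$ so that $\pi_n(X)$ is stably isomorphic to the trivial module $\Z$, and then deduce $\Aut_{\Z G}(\pi_n(X)) = \{\pm\id\}$. For a finite $(G,n)$-complex $X$ one always has $\pi_n(X) \cong \Omega^{n+1}\Z$, an $(n+1)$-st syzygy of $\Z$, so a free resolution
\[
0 \to \pi_n(X) \to C_n(\wt X) \to \cdots \to C_0(\wt X) \to \Z \to 0.
\]
If $\pi_n(X)$ were stably isomorphic to $\Z$, splicing this sequence with itself would produce a periodic free resolution of $\Z$ of period $n+1$, so $Q_{56}$ would have free period $n+1$. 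But $n+1$ is \emph{odd} when $n$ is even, whereas $Q_{56}$ has $4$-periodic cohomology and in particular all of its free periods are multiples of $4$ (cf.\ the discussion following \cref{lemma:canc-except-periodic-case}). Hence no such $X$ exists for even $n > 2$, and the reduction to ``$15 \not\equiv \pm\alpha(1) \pmod{56}$'' does not go through. (As a secondary point, even when $\pi_n(X)$ is stably isomorphic to $\Z$, the conclusion $\Aut_{\Z G}(\pi_n(X)) = \{\pm\id\}$ requires $\pi_n(X) \cong \Z$ on the nose; for $\pi_n(X) \cong \Z \oplus \Z G$, \cref{lemma:deg-aut} gives image $\FF(G)$, which is strictly larger.)

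What is actually needed in even dimensions is control of the image of the degree map $\deg : \Aut_{\Z G}(\pi_n(X)) \to (\Z/56)^\times$ for a module $\pi_n(X)$ that is \emph{not} stably $\Z$, together with the corresponding statement for the $\Aut(Q_{56})$-action; one must then show that $15$ does not lie in the resulting subgroup of $(\Z/56)^\times$. The decisive arithmetic input is $15 \notin \FF(Q_{56})$, i.e.\ that $(N,15)$ is not free (\cref{thmx:SF-vs-F} and \cref{cor:N15}), rather than the $\pm\Aut(Q_{56})$-orbit condition you reduce to. Your write-up never reaches this point, so as it stands the proposal does not establish the corollary.
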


Such examples have only previously been known for finite abelian fundamental groups \cite{SD79}. The case $n=2$ would require part (2) of \cref{construction:Swan} to apply in that setting, which does so if and only if $Q_{56}$ has the D2 property (see, for example, \cite[Proposition 5.1]{Ni20a}). The D2 property for quaternion groups was studied by the authors in \cite{HN25}. In the notation given there, it would suffice to construct a finite presentation $\mathcal{P}$ for $Q_{56}$ such that $\Psi(\mathcal{P}) = [(N,15)]$.

We next consider applications to manifolds. It is an open problem to determine whether there exist closed smooth $2n$-manifolds $M$ and $N$ such that $M \# 2(S^n \times S^n) \cong N \# 2(S^n \times S^n)$ are diffeomorphic but $M \# (S^n \times S^n) \not \cong N \# (S^n \times S^n)$, with particular interest in the case where $n=2$ and $M$ and $N$ are homeomorphic.
This question admits many variations for manifolds with boundary, and it was shown by Kang that examples exist for homeomorphic compact smooth $4$-manifolds with boundary \cite{Ka22}. 
Let $\natural$ denote boundary connected sum.

\begin{corollary} \label{cor:ThmB-2}
Let $n \ge 1$ with $n \equiv 3 \mod 4$. Then there exist smooth compact $(2n+1)$-manifolds $M$ and $N$ such that $M \, \natural \, 2(S^n \times D^{n+1}) \cong N \, \natural \, 2(S^n \times D^{n+1})$ but $M \, \natural \, (S^n \times D^{n+1}) \not \cong N \, \natural \, (S^n \times D^{n+1})$. In fact, we have $M \, \natural \, (S^n \times D^{n+1}) \not \simeq N \, \natural \, (S^n \times D^{n+1})$.
\end{corollary}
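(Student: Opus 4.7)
The plan is to realise the finite $n$-complexes $X, Y$ furnished by \cref{thmx:CW} as the spines of smooth compact $(2n+1)$-manifolds via regular neighborhoods, and then to identify the boundary connected sum with $S^n \times D^{n+1}$ with the wedge with $S^n$ at the level of homotopy types. I would start by embedding $X$ and $Y$ piecewise-linearly in $\R^{2n+1}$ (possible for any $n$-complex by general position) and taking smooth regular neighborhoods $M$ and $N$. These are compact smooth $(2n+1)$-manifolds with $M \simeq X$ and $N \simeq Y$.

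Since $S^n \times D^{n+1}$ deformation retracts onto its $S^n$-core and the boundary connected sum glues along a contractible $(2n)$-disk in each boundary, this yields a homotopy equivalence
\[
M \,\natural\, k(S^n \times D^{n+1}) \simeq X \vee k S^n,
\]
and analogously for $N$, for every $k \ge 0$. The non-homotopy-equivalence statement after one stabilisation then follows immediately from \cref{thmx:CW}: a homotopy equivalence $M \,\natural\, (S^n \times D^{n+1}) \simeq N \,\natural\, (S^n \times D^{n+1})$ would force $X \vee S^n \simeq Y \vee S^n$, a contradiction; in particular the manifolds cannot be diffeomorphic.

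For the diffeomorphism after two stabilisations, the input is the simple homotopy equivalence $X \vee 2S^n \simeq_s Y \vee 2S^n$ from \cref{thmx:CW}. I would appeal to Wall's classification of thickenings: in the stable range $q \ge 2\dim K + 1$ (here $q = 2n+1$, exactly at the edge), diffeomorphism classes of thickenings of an $n$-complex are determined by the simple homotopy type of the spine together with a stable tangential invariant. Both of the manifolds in question can be realised as smooth regular neighborhoods of embeddings of $X \vee 2S^n$ and $Y \vee 2S^n$ in $\R^{2n+1}$, so they are parallelizable and their tangential invariants are trivial. The hard part will be the careful bookkeeping at the edge of the stable range, i.e., verifying that these invariants match under the chosen simple homotopy equivalence on spines. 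An alternative route avoids this subtlety: work in the PL category, where uniqueness of PL regular neighborhoods in codimension $n+1 \ge 4$ (which holds as $n \equiv 3 \bmod 4$ forces $n \ge 3$) produces a PL-homeomorphism, which can then be smoothed to give the required diffeomorphism.
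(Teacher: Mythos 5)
Your approach is essentially the paper's: embed $X$ and $Y$ in $\R^{2n+1}$, take smooth regular neighbourhoods $M$ and $N$, identify $M \,\natural\, k(S^n \times D^{n+1})$ with $X \vee kS^n$, and deduce the non-homotopy-equivalence after one stabilisation from \cref{thmx:CW}. Two points, one an omission and one an error, are worth flagging.

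First, you should record that the identification $M \,\natural\, k(S^n \times D^{n+1}) \simeq X \vee kS^n$ is a \emph{simple} homotopy equivalence, not merely a homotopy equivalence: the boundary connected sum deformation retracts onto the wedge of the core with $k$ copies of $S^n$, and a deformation retraction has trivial Whitehead torsion. This is not cosmetic --- it is exactly what you need for the diffeomorphism step, since the relevant input to Wall's thickening theory is the simple homotopy type of the spine.

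Second, the ``hard part'' you anticipate at the edge of the stable range does not arise, and your proposed PL alternative does not in fact avoid it. The correct citation is Wall's theorem on trivial thickenings \cite[p76]{Wa66}: for $q = 2\dim K + 1$ with $q \geq 7$, a $q$-thickening of $K$ which embeds in $\R^q$ is unique up to diffeomorphism and depends only on the simple homotopy type of $K$. Here $n \equiv 3 \bmod 4$ forces $n \geq 3$, so $2n+1 \geq 7$ and the theorem applies directly; both $M \,\natural\, 2(S^n \times D^{n+1})$ and $N \,\natural\, 2(S^n \times D^{n+1})$ are regular neighbourhoods in $\R^{2n+1}$ of simple homotopy equivalent spines, hence both trivial thickenings, hence diffeomorphic. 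By contrast, ``uniqueness of PL regular neighbourhoods'' is a statement about a \emph{single} embedded complex; it gives you nothing here, because $X \vee 2S^n$ and $Y \vee 2S^n$ are distinct complexes that are simple homotopy equivalent but in general not PL homeomorphic. Making your PL route rigorous would require connecting the two spines inside $\R^{2n+1}$ by a chain of elementary expansions and collapses, each embeddable, and verifying that regular neighbourhoods are preserved along the way --- which is precisely the content (and proof) of Wall's trivial thickening theorem, so you have not saved any work.
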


This follows directly from \cref{thmx:CW} by taking $M$ and $N$ to be smooth regular neighbourhoods of embeddings of $X$ and $Y$ into $\R^{2n+1}$ respectively. Then $X \vee kS^n \simeq_s M \, \natural \, k(S^n \times D^{n+1})$ for all $k \ge 0$ (and similarly for $Y$). This implies that $M \, \natural \, (S^n \times D^{n+1}) \not \simeq N \, \natural \, (S^n \times D^{n+1})$. Since $M \, \natural \, 2(S^n \times D^{n+1})$ and $N \, \natural \, 2(S^n \times D^{n+1})$ are both simple homotopy equivalent to $X \vee 2S^n$ and embed in $\R^{2n+1}$ with $2n+1 \ge 7$, they are trivial thickenings and so are diffeomorphic \cite[p76]{Wa66}. 

We have $\partial(M \, \natural \, k(S^n \times D^{n+1})) = \partial M \# k(S^n \times S^n)$ for all $k \ge 0$ (and similarly for $N$) and so $\partial M \# 2(S^n \times S^n) \cong \partial N \# 2(S^n \times S^n)$. This leads to a potential strategy, in dimensions $2n \equiv 6 \mod 8$, for tackling the original problem for closed manifolds. However, determining whether $\partial M \# (S^n \times S^n)$ and $\partial N \# (S^n \times S^n)$ are homotopy equivalent for a given pair $M, N$ appears to be a difficult problem. Further discussion is beyond the scope of this article.

\begin{question}
Let $n \ge 1$ with $n \equiv 3 \mod 4$. Do there exist compact $(2n+1)$-manifolds $M$ and $N$ as in \cref{cor:ThmB-2} such that $\partial M \# (S^n \times S^n) \not \cong \partial N \# (S^n \times S^n)$?
\end{question}

We now give our third main result which, whilst it concerns Swan modules, is not related directly to Theorems \ref{thmx:SF-vs-F} or \ref{thmx:CW}. Recall that a finite group $G$ has \textit{free period $k$} if there exists a $k$-periodic resolution of finitely generated free $\Z G$-modules. Such groups necessarily have $k$-periodic cohomology. The question of whether or not the converse holds originated in work of Swan \cite{Sw60-II} and was a major motivating question during the early development of algebraic K-theory, particularly in light of the fact that finite $3$-manifold groups necessarily have free period $4$. The question featured in Wall's problem list \cite[Problem A3]{Wa79} and was eventually resolved by Milgram \cite{Mi85} who showed that certain groups with $4$-periodic cohomology of the form $Q(2^na,b,c)$ do not have free period $4$ (see also \cite{Da81,DM85}).

If $G$ is a finite group with $k$-periodic cohomology, then $H^k(G;\Z) \cong \Z/|G|$ and the functor $H^k(-;\Z)$ induces a map $\psi_k : \Aut(G) \to (\Z/|G|)^\times$. It was shown independently by Dyer~\cite[Note (b) p276]{Dy76} and Davis~\cite{Da83} that if $G$ has free period $k$, then $(N,\psi_k(\theta))$ is stably free for all $\theta \in \Aut(G)$.
Plotnick \cite[p98]{Pl82} asked whether this holds in general (see also \cite[p488]{Da83}, \cite[Question 7.3]{Ni20a}). 
By work of Bentzen--Madsen \cite[p448]{BM83}, this holds for $Q(8,p,q)$ where $p, q$ are distinct odd primes (see \cite[Corollary 4.4]{BM83}, \cite[p231]{Ma83}).
We will show:

\begin{thmx} \label{thmx:automorphism}
Let $k \ge 1$ with $k \equiv 4 \mod 8$. Then there exists a finite group $G$ with $k$-periodic cohomology and an automorphism $\theta \in \Aut(G)$ such that $(N,\psi_k(\theta))$ is not stably free.
\end{thmx}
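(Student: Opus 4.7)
The plan is to leverage the Swan module computations over $\Z Q_{56}$ that underpin Theorem A.

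First, the pullback-square analysis used in the proof of Theorem A should identify the full kernel $K \le (\Z/56)^\times$ of the Swan homomorphism $\sigma_{Q_{56}}\colon (\Z/56)^\times \to \wt K_0(\Z Q_{56})$, not merely the fact that $15 \in K$. In particular $K$ is a proper subgroup of $(\Z/56)^\times$, so any $r \in (\Z/56)^\times \setminus K$ gives a Swan module $(N,r)$ that is not stably free.

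Second, I would compute the image of $\psi_4\colon \Aut(Q_{56}) \to (\Z/56)^\times$ directly. The automorphism group $\Aut(Q_{4n})$ has a standard description, and the induced action of each $\theta$ on a chosen generator of $H^4(Q_{4n};\Z) \cong \Z/4n$ can be written down explicitly in terms of how $\theta$ restricts to the cyclic subgroup of order $2n$. For $n = 14$ this produces a concrete subgroup of $(\Z/56)^\times$; the key step is to exhibit one specific automorphism $\theta$ whose image $\psi_4(\theta)$ lies outside $K$. This would settle Theorem D in the case $k = 4$.

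Third, to pass from $k = 4$ to every $k \equiv 4 \pmod 8$, I would use that the periodicity isomorphism on $H^*(Q_{56};\Z)$ is given by cup product with a fixed generator of $H^4$, whence $\psi_k(\theta) = \psi_4(\theta)^{k/4}$ for every $k$ divisible by $4$. Since $k/4$ is odd whenever $k \equiv 4 \pmod 8$, the image of $\psi_k(\theta)$ in the quotient $(\Z/56)^\times / K$ equals the image of $\psi_4(\theta)$ raised to an odd power. Choosing $\theta$ in the second step so that the class of $\psi_4(\theta)$ in that quotient has even order (order $2$ is the expected behaviour, since the obstruction group tends to be $2$-primary in this setting), non-triviality persists for every odd exponent and hence for every admissible $k$.

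The main obstacle is the arithmetic in the second step: aligning the explicit description of $\ker \sigma_{Q_{56}}$ from the proof of Theorem A with the image of $\psi_4$ in $(\Z/56)^\times$, and locating a single $\theta$ that simultaneously lies outside $K$ and has even order in $(\Z/56)^\times / K$. Both subgroups are constrained by the pullback-square unit computations and by the structure of $\Aut(Q_{56})$ respectively, so whether a suitable witness exists is not clear in advance; this is the delicate combinatorial step around which the proof must be organised.
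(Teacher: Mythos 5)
Your plan cannot work with $Q_{56}$: the group $Q_{56}$ embeds in $S^3$ (as a binary dihedral group) and so acts freely on $S^3$, hence it has free period $4$. By the Davis--Dyer result (Proposition~\ref{prop:davis-dyer} in the paper), any group with free period $k$ satisfies $\Aut_k(G)\le\SF(G)$, so for $Q_{56}$ one has $\psi_4(\theta)\in\SF(Q_{56})$ for every automorphism $\theta$. Indeed the paper computes $\Aut_4(Q_{56})=\{1,9,25\}$, which is contained in $\SF(Q_{56})=\{r : r\equiv\pm1\bmod 8\}$. So the ``delicate combinatorial step'' you flag as the main obstacle in step two is not merely unclear --- the witness you are looking for provably does not exist. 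This is precisely why Theorem~\ref{thmx:automorphism} yields a new proof of the existence of groups with periodic cohomology that fail to have free period $k$: any group that works for Theorem~\ref{thmx:automorphism} cannot have free period $k$.

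The paper instead takes $G=Q(16,5,1)$, a group from the Milnor family $Q(2^na,b,c)$, which has $4$-periodic cohomology but is not a subgroup of $S^3$. It computes $\IM(\psi_4)=((\Z/80)^\times)^2=\{1,9,41,49\}$ explicitly (Theorem~\ref{thm:image-psi-4}), and then uses \textsc{Magma} implementations of the Bley--Boltje and Bley--Wilson algorithms to verify that $[(N,9)]\ne 0$ in $\wt K_0(\Z Q(16,5,1))$, which shows $(N,9)$ is not stably free. Your third step --- passing from $k=4$ to all $k\equiv 4\bmod 8$ via $\psi_k(\theta)=\psi_4(\theta)^{k/4}$ and using oddness of $k/4$ --- does match the paper's Lemma~\ref{lemma:psi_k-properties} argument, but it can only be carried out once the correct group is in place. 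Your conflation of $\ker\sigma_G$ with $\FF(G)$ in places also needs care: stable freeness is governed by $\SF(G)=\ker(S)$, whereas the pullback-square computations in Section~\ref{s:swan-modules} were primarily set up to detect membership in the smaller subgroup $\FF(G)$ of \emph{free} Swan modules, which is a different question.
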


In particular, for each $k$, we can take $G = Q(16,5,1)$ and $\theta \in \Aut(G)$ such that $\psi_k(\theta) = 9$.
By combining this with Dyer's result that $(N,\psi_k(\theta))$ is stably free for all $\theta \in \Aut(G)$ provided $G$ has free period $k$, we obtain:

\begin{corollary}
Let $k \ge 1$ with $k \equiv 4 \mod 8$. Then there exists a finite group with $k$-periodic cohomology which does not have free period $k$.
\end{corollary}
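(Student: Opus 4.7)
The plan is to prove this corollary as a direct contrapositive of the Dyer--Davis theorem, using Theorem \ref{thmx:automorphism} to supply the obstruction. First, I fix $k \ge 1$ with $k \equiv 4 \mod 8$ and apply Theorem \ref{thmx:automorphism} to obtain a finite group $G$ with $k$-periodic cohomology together with an automorphism $\theta \in \Aut(G)$ for which the Swan module $(N,\psi_k(\theta))$ is not stably free. The claim is that this same $G$ does not have free period $k$.

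To close the argument, I would suppose for contradiction that $G$ does have free period $k$, i.e.\ that $\Z$ admits a $k$-periodic resolution by finitely generated free $\Z G$-modules. By the theorem of Dyer \cite[Note (b) p276]{Dy76} and Davis \cite{Da83} recalled immediately before the corollary, this would force $(N,\psi_k(\phi))$ to be stably free for every $\phi \in \Aut(G)$. Specialising to $\phi = \theta$ contradicts the conclusion of Theorem \ref{thmx:automorphism}, so $G$ cannot have free period $k$. The explicit witness promised by Theorem \ref{thmx:automorphism} is $G = Q(16,5,1)$ together with $\theta$ chosen so that $\psi_k(\theta) = 9$, so that a single fixed group provides an example for every admissible $k$.

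There is essentially no obstacle remaining at this stage, since all of the substantive content has been absorbed into Theorem \ref{thmx:automorphism} and the Dyer--Davis implication is quoted from the literature. The only minor point worth recording is that $Q(16,5,1)$ has basic cohomological period $4$, so every positive multiple of $4$ is a period; in particular every $k \equiv 4 \mod 8$ is a period, and the statement of the corollary is non-vacuous for each such $k$.
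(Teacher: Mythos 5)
Your proof is correct and is essentially the argument the paper itself uses: \cref{thmx:automorphism} supplies $G = Q(16,5,1)$ and $\theta$ with $(N,\psi_k(\theta))$ not stably free, and the Dyer--Davis implication (\cref{prop:davis-dyer}) then rules out free period $k$ by contrapositive. The extra observation that $Q(16,5,1)$ has cohomological period $4$, so every $k \equiv 4 \bmod 8$ is indeed a period, is a worthwhile sanity check but adds nothing beyond what the paper already records.
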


This was previously established by Milgram \cite{Mi85}, who used that, if $G$ has $k$-periodic cohomology, there exists an element $\sigma_k(G) \in \wt K_0(\Z G)/\{ [(N,r)] : r \in (\Z/|G|)^\times\}$ known as the \textit{Swan finiteness obstruction} which vanishes if and only if $G$ has free period $k$. 
He used pullback squares to show that $\sigma_4(Q(8,p,q)) \ne 0$ for certain primes $p,q$. 
Using an analogous approach, Davis \cite{Da81} showed that $\sigma_4(Q(16,p,1)) \ne 0$ for certain primes $p$, including the case $p = 5$. The extension to the case $k =4i$ for $i$ odd follows from Wall's theorem that $2 \cdot \sigma_4(G) = 0$ whenever $G$ has $4$-periodic cohomology \cite{Wa79-paper}, combined with the standard fact that $\sigma_{4i}(G) = i \cdot \sigma_4(G)$.

In contrast, we make no use of the Swan finiteness obstruction or Wall's theorem and instead we need only show that $[(N,9)] \ne 0 \in \wt K_0(\Z Q(16,5,1))$.
Whilst it is possible to prove this by hand, we use algorithms of Bley--Boltje \cite{MR2282916} and Bley--Wilson \cite{MR2564571} which explicitly compute $\wt K_0(\Z G)$ for a finite group $G$.
Both algorithms have been implemented in \textsc{Magma}~\cite{MR1484478} and the code we used can be found in \cref{appendix:magma}.

We now give two more applications of \cref{thmx:automorphism}. The first concerns projective resolutions. 

\begin{corollary} \label{cor:theta-resolutions}
Let $k \ge 1$ with $k \equiv 4 \mod 8$.
Then there exists a finite group $G$ and an exact sequence of finitely generated projective $\Z G$-modules
\[ \mathscr{R} = (0 \to \Z  \to P_{k-1} \to P_{k-2} \to \cdots \to P_1 \to P_0 \to \Z \to 0) \] 
whose Euler class $e(\mathscr{R}) = \sum_{i=0}^k (-1)^i [P_i] \in \wt K_0(\Z G)$ does not coincide with $e(\mathscr{R}_\theta)$ for some $\theta \in \Aut(G)$, where $\mathscr{R}_\theta = \{(P_i)_\theta\}_{i=0}^k$ denotes $\mathscr{R}$ with $G$-action reparametrised by $\theta$. 
\end{corollary}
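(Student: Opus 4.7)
The plan is to derive the corollary from \cref{thmx:automorphism} by comparing the Euler class of a projective resolution with that of its $\theta$-reparametrisation, using an algebraic analogue of \cref{construction:Swan}.

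First I would apply \cref{thmx:automorphism} to obtain a finite group $G$ with $k$-periodic cohomology and $\theta \in \Aut(G)$ such that $(N,\psi_k(\theta))$ is not stably free, i.e.\ $[(N,\psi_k(\theta))] \ne 0 \in \wt K_0(\Z G)$. Since $G$ has $k$-periodic cohomology, there exists a projective resolution
\[ \mathscr{R} = (0 \to \Z \to P_{k-1} \to \cdots \to P_0 \to \Z \to 0) \]
of length $k$, which after stabilising I may assume has $P_0$ free. Its extension class in $\Ext^k_{\Z G}(\Z,\Z) = H^k(G;\Z) \cong \Z/|G|$ is a unit $c$. The reparametrisation $\mathscr{R}_\theta$ is again a projective resolution of the trivial module $\Z$ (which is fixed by $\theta$), and by naturality of cohomology its extension class equals $\theta^{*}(c) = \psi_k(\theta) \cdot c$, using the defining property of $\psi_k$.

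The crucial step is then to establish
\[ e(\mathscr{R}_\theta) - e(\mathscr{R}) \,=\, [(N,\psi_k(\theta))] \in \wt K_0(\Z G); \]
granted this, the conclusion is immediate from \cref{thmx:automorphism}. To prove the formula I would transcribe \cref{construction:Swan} to the algebraic setting: choose an integer $m$ representing $\psi_k(\theta)^{-1}$ modulo $|G|$, and replace $P_0$ in $\mathscr{R}$ by the pullback $P_0 \times_{\varepsilon,m} \Z$, forming a new exact complex $\mathscr{R}'$. By Proposition~3.4 and Lemma~4.12 of \cite{Ni20a} this pullback is isomorphic to $(N,\psi_k(\theta)) \oplus \Z G^j$ for some $j \ge 0$, so $e(\mathscr{R}') - e(\mathscr{R}) = [(N,\psi_k(\theta))]$ in $\wt K_0(\Z G)$. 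A direct calculation shows that $\mathscr{R}'$ has extension class $\psi_k(\theta) \cdot c$, so $\mathscr{R}'$ and $\mathscr{R}_\theta$ are chain equivalent by the homological-algebra comparison theorem and hence have equal Euler classes, giving the displayed formula.

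The main obstacle is precisely this Euler-class formula: one must verify that the pullback construction of \cref{construction:Swan} carries over to a general projective resolution and that it genuinely multiplies the extension class by $\psi_k(\theta)$. Modulo this essentially standard algebraic content, the corollary is a direct consequence of \cref{thmx:automorphism}.
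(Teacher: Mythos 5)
Your approach is essentially the same as the paper's: deduce the corollary from \cref{thmx:automorphism} via the formula $e(\mathscr{R}_\theta) - e(\mathscr{R}) = [(N,\psi_k(\theta))]$, which the paper obtains by citing \cite[Proposition 3.4 \& Section 6.2]{Ni20a}, while you reconstruct its proof by transcribing \cref{construction:Swan} to projective resolutions. Your sketch is sound in outline; the one step worth tightening is the passage from ``same extension class'' to ``equal Euler class,'' which holds because a quasi-isomorphism between bounded complexes of projective modules is a chain homotopy equivalence (rather than following directly from the comparison theorem for projective resolutions, which applies to genuine resolutions, not to $k$-extensions with projective middle terms and the same Yoneda class).
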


We have $e(\mathscr{R}_\theta) = e(\mathscr{R})_\theta$ and so this equivalently says that $e(\mathscr{R}) \in \wt K_0(\Z G)$ is not invariant under the action of $\Aut(G)$.
The result follows from the fact that $e(\mathscr{R}_\theta) - e(\mathscr{R}) = [(N,\psi_k(\theta)]$ by \cite[Proposition 3.4 \& Section 6.2]{Ni20a}.
This gives the first example where the action of $\Aut(G)$ on projective modules defined in \cite[Theorem B]{Ni20a} does not coincide with the usual action $P \mapsto P_\theta$, which has consequences for the homotopy classification of finite $n$-complexes with $(n-1)$-connected universal cover (see \cite[Section 1.2]{Ni20a} for further discussion).

Our final application resolves a question of Plotnick in the affirmative \cite[p98]{Pl82}.

\begin{corollary}
Let $n \ge 1$ with $n \equiv 2 \mod 8$. Then there exists an $n$-complex $X$ such that not every automorphism of $\pi_1(X)$ is realised as $\pi_1(f)$ for $f$ a (pointed) self homotopy equivalence of $X \vee k S^n$ for some $k \ge 0$, i.e.~$\colim_{k \to \infty} \pi_1(\hAut(X \vee k S^n)) \subsetneq \Aut(\pi_1(X))$. 
\end{corollary}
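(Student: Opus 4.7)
The plan is to reduce the statement to non-realisability of Euler classes in $\wt K_0(\Z G)$, using \cref{thmx:automorphism} together with \cref{cor:theta-resolutions}. Since $n \equiv 2 \pmod 8$, set $m := n + 2$, so $m \equiv 4 \pmod 8$. Applying \cref{thmx:automorphism} with that value of the period furnishes a finite group $G$ with $m$-periodic cohomology and an automorphism $\theta \in \Aut(G)$ such that $[(N,\psi_m(\theta))] \ne 0$ in $\wt K_0(\Z G)$, and \cref{cor:theta-resolutions} provides a projective resolution
\[
\mathscr{R} : 0 \to \Z \to P_{m-1} \to \cdots \to P_0 \to \Z \to 0
\]
with $e(\mathscr{R}_\theta) - e(\mathscr{R}) = [(N,\psi_m(\theta))] \ne 0$ in $\wt K_0(\Z G)$.

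Next I would produce the complex $X$ by realising a free stabilisation of $\mathscr{R}$ as the universal-cover chain complex of an $n$-complex. Inserting identity maps $\Z G \xrightarrow{\id} \Z G$ between consecutive terms allows us to assume that $P_0,\dots,P_n$ are free of finite rank; write $F_i := P_i$ for those, and $P := P_{n+1}$ which remains projective. For $n \ge 3$, the free chain complex $F_n \to F_{n-1} \to \cdots \to F_0$ is realised as $C_*(\wt X)$ of a finite $(n-1)$-connected $n$-complex $X$ with $\pi_1(X) = G$ (e.g.\ by \cite[Corollary 8.27]{Jo12a}), and $\pi_n(X)$ fits into a short exact sequence $0 \to \Z \to P \to \pi_n(X) \to 0$. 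The remaining case $n = 2$ requires the D2 property for $G$, to be handled separately via the framework of \cite{HN25}.

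Suppose for contradiction that $\theta$ lies in the image of $\colim_{k \to \infty} \pi_1(\hAut(X \vee k S^n)) \to \Aut(G)$, realised by a pointed self-homotopy equivalence $f$ of $X \vee k S^n$ for some $k \ge 0$. Lifting $f$ to universal covers yields a $\theta$-equivariant chain homotopy equivalence between $C_*(\widetilde{X \vee k S^n})$ and its $\theta$-reparametrisation. Free-stabilising $\mathscr{R}$ by adding $\Z G^k$ in degrees $n$ and $n+1$, with the new differential an identity between those summands, yields another period-$m$ projective resolution of $\Z$ with the same Euler class, whose $n$-truncation is exactly $C_*(\widetilde{X \vee k S^n})$. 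The comparison theorem for projective resolutions then lifts $f_*$ to a chain homotopy equivalence between the stabilised $\mathscr{R}$ and $\mathscr{R}_\theta$, forcing $e(\mathscr{R}) = e(\mathscr{R}_\theta)$ and therefore $[(N,\psi_m(\theta))] = 0$, contradicting \cref{thmx:automorphism}.

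The main obstacle is making the lifting step precise: verifying that a $\theta$-equivariant chain homotopy equivalence of the stabilised $n$-truncation extends uniquely to an equivalence of the full periodic projective resolutions $\mathscr{R}$ and $\mathscr{R}_\theta$, and that this preserves the Euler class in $\wt K_0(\Z G)$. This should follow from the standard comparison of projective resolutions applied to the periodic tails $P \to \pi_n(X)$ and $P_\theta \to \pi_n(X)_\theta$, together with careful tracking of how stabilisation by $S^n$-summands affects the top projective module $P$ without altering its class in $\wt K_0(\Z G)$.
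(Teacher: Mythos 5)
Your approach is genuinely different from the paper's: the paper constructs $X$ as a punctured orbit complex $Y/Q(16,5,1) \setminus \{\text{pt}\}$ and invokes the main theorem of Plotnick \cite{Pl82} as a black box, whereas you try to derive the conclusion directly from \cref{thmx:automorphism} and \cref{cor:theta-resolutions} by realising a truncation of the periodic resolution $\mathscr{R}$ and arguing via Euler classes. That is a reasonable idea, but the pivotal step is not correct as written, and indeed it is where the genuine mathematical content resides.

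The problem is the claim that a $\theta$-equivariant chain homotopy equivalence of the $n$-truncation ``lifts by the comparison theorem'' to an equivalence of the full period-$(n+2)$ resolutions and hence forces $e(\mathscr{R}) = e(\mathscr{R}_\theta)$. The comparison theorem does let you extend the chain map one more step: since $P_{n+1}$ is projective and the given equivalence induces an isomorphism $\alpha \colon \pi_n \to (\pi_n)_\theta$ on $H_n$ of the truncations, you can choose $\wt f_{n+1}\colon P_{n+1} \to (P_{n+1})_\theta$ covering $\alpha$. But that extension need not be a chain homotopy equivalence of the full complexes, because the restriction of $\wt f_{n+1}$ to the kernel $\Z \hookrightarrow P_{n+1}$ need not be $\pm 1$. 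Equivalently, Schanuel's lemma applied to the two extensions $0 \to \Z \to P_{n+1} \to \pi_n \to 0$ and $0 \to \Z \to (P_{n+1})_\theta \to (\pi_n)_\theta \to 0$ only yields $\Z \oplus (P_{n+1})_\theta \cong \Z \oplus P_{n+1}$ as $\Z G$-modules, and since $\Z$ is not projective this does \emph{not} imply $[P_{n+1}] = [(P_{n+1})_\theta]$ in $\wt K_0(\Z G)$. In fact the paper's own framework (\cite[Proposition 3.4 \& Section 6.2]{Ni20a}, together with $\Ext^1_{\Z G}(\pi_n,\Z) \cong \Z/|G|$) shows that the class of the top projective term in a period-$(n+2)$ resolution is determined by the $n$-truncation only \emph{up to a Swan module class}, so the difference $[P_{n+1}] - [(P_{n+1})_\theta]$ is a priori exactly the kind of element you are trying to prove is zero. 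Pinning it down requires relating the ``degree'' of $\alpha$ (its action on $\wh H^0(G;\pi_n) \cong \Z/|G|$, which lies in $\SF(G)$ or a related subgroup) to $\psi_{n+2}(\theta)$ via the $k$-invariant formalism, which is precisely the content of Plotnick's theorem. Your closing paragraph correctly flags this as the ``main obstacle,'' but the suggestion that it ``should follow from the standard comparison of projective resolutions'' is not right; without the $k$-invariant/degree analysis (or an appeal to \cite{Pl82}) the argument does not close.
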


In particular, we can take $X$ to be the $n$-complex homotopy equivalent to $Y/Q(16,5,1)$ with one point removed, where $Y$ is an $(n+1)$-complex homotopy equivalent to $S^{n+1}$ on which $Q(16,5,1)$ acts freely. The result then follows from the main theorem in \cite{Pl82}.

\subsection*{Organisation of the paper} The article will be structured as follows. 
In \cref{s:swan-modules}, we give the necessary background on Swan modules before studying these modules over quaternion groups $Q_{8p}$ for $p$ an odd prime, leading to a proof of \cref{thmx:SF-vs-F}.
In \cref{s:CW}, we give more details on \cref{construction:Swan} and the homotopy types of CW-complexes, and we prove \cref{thmx:CW} and \cref{thm:all-dimensions}. In \cref{s:Swan-module-automorphisms}, we prove \cref{thmx:automorphism}. In \cref{appendix:heuristics}, we discuss our heuristic procedure for identifying potential candidates for non-free stably free Swan modules. In \cref{appendix:magma}, we record the \textsc{Magma} computations which we use in the proof of \cref{thmx:automorphism}.

\subsection*{Acknowledgements} 
TH gratefully acknowledges support by the Deutsche Forschungsgemeinschaft -- Project-ID 286237555 -- TRR 195; and Project-ID 539387714.
JN was supported by a Rankin-Sneddon Research Fellowship from the University of Glasgow.
We would like to thank Werner Bley for advice on \textsc{Magma} computations, and Jim Davis and Mark Powell for helpful comments on the manuscript.

\section{Swan modules over quaternion groups} \label{s:swan-modules}

The primary aim of this section is to establish the existence of non-free stably free Swan modules, and hence to prove \cref{thmx:SF-vs-F}.
Preliminaries on Swan modules are given in \cref{ss:swan-prelim}.
In \cref{ss:swan-stablyfree} and \cref{ss:swan-free} we study stably free and free Swan modules for quaternion groups respectively. In \cref{ss:swan-proof}, we use this to prove \cref{thmx:SF-vs-F}.

\subsection*{Conventions} 
For a ring $R$, all $R$-modules will be assumed to be finitely generated left $R$-modules. If $M$ is an $R$-module and $f : R \to S$ is a ring homomorphism, the $S$-module obtained via extension of scalars will be denoted by $f_\#(M)$ or $S \otimes_R M$.

\subsection{Preliminaries on Swan modules}
\label{ss:swan-prelim}

Let $G$ be a finite group. Recall from the introduction that a Swan module is a $\Z G$-module of the form $(N,r) = \Z G \cdot N + \Z G \cdot r \le \Z G$ where $N = \sum_{g \in G} g \in \Z G$ and $r \in \Z$ is such that $(r,|G|)=1$. If $r \equiv r' \mod |G|$, then $(N,r) \cong (N,r')$ \cite[Lemma 6.1 (a)]{Sw60-II} and so $(N,r)$ is well-defined for $r \in (\Z/|G|)^\times$. If $r, r' \in (\Z/|G|)^\times$, then $(N,r) \oplus (N,r') \cong (N,rr') \oplus \Z G$ \cite[Lemma 6.1 (c)]{Sw60-II}. This implies that Swan modules are projective since $(N,r) \oplus (N,r^{-1}) \cong \Z G^2$. Hence we obtain a group homomorphism
\[ S : (\Z/|G|)^\times \to \wt K_0(\Z G), \quad r \mapsto [(N,r)] \]
which we will refer to as the \textit{Swan map}. Define the \textit{Swan subgroup} to be $T(G) = \IM(S)$.

To determine when Swan modules are free or stably free, we must determine the sets:
\[ \FF(G) = \{ r \in (\Z/|G|)^\times : (N,r) \cong \Z G\}, \quad 
\SF(G) = \ker(S_G)
\]
which are both subgroups of $(\Z/|G|)^\times$; in the case of $\FF(G)$, this follows from the fact that $(N,r) \otimes (N,r') \cong (N,rr')$.
We have $\FF(G) \le \SF(G)$ and $T(G) \cong (\Z/|G|)^\times/\SF(G)$. By \cite[Lemma 6.3]{Sw60-II}, we have that
\[ \FF(G) = \IM(\varepsilon : (\Z G/N)^\times \to (\Z/|G|)^\times) \]
where $\varepsilon$ is induced by the augmentation map. This can be made explicit as follows. 

\begin{lemma} \label{lemma:ZG/N-unit-to-isomorphism}
Let $G$ be a finite group, let $r \in (\Z/|G|)^\times$ and let $u_r \in (\Z G/N)^\times$ be such that $\varepsilon(u_r) = r$. 
Let $s = r^{-1} \in (\Z/|G|)^\times$, let $t \in \Z$ be such that $rs = 1 + t|G|$ and let $u_s = u_r^{-1} \in (\Z G/N)^\times$.
Let $\wt u_r, \wt u_s \in \Z G$ be the unique representatives of $u_r, u_s \in \Z G/N$ such that $\varepsilon(\wt u_r ) = r$, $\varepsilon(\wt u_s) = s$.
Then there are mutually inverse isomorphisms of $\Z G$-modules:
\[ f : \Z G \to (N,r), \,\, 1 \mapsto \wt u_s r - tN, \qquad g : (N,r) \to \Z G, \,\, N \mapsto N, \, r \mapsto \wt u_r. \]
\end{lemma}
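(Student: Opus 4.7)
The plan is straightforward: verify that $f$ and $g$ are well-defined $\Z G$-linear maps and that their compositions are the identities. Two elementary facts about $N \in \Z G$ will be used repeatedly: $N$ is central, so $aN = \varepsilon(a)N$ for all $a \in \Z G$; and $N^2 = |G| \cdot N$.

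Well-definedness of $f$ is immediate since it is specified on the generator $1$ of the free module $\Z G$, and the assigned image $\wt u_s r - tN$ lies in $(N,r)$ (as $\wt u_s r \in \Z G \cdot r$ and $tN \in \Z G \cdot N$). For $g$, I would realise $(N,r)$ as the image of the $\Z G$-linear surjection $\psi \colon \Z G \oplus \Z G \twoheadrightarrow (N,r)$ defined by $\psi(a,b) = aN + br$, and check that the auxiliary map $\phi \colon \Z G \oplus \Z G \to \Z G$, $\phi(a,b) = aN + b\wt u_r$, vanishes on $\ker(\psi)$. If $(a,b) \in \ker(\psi)$, centrality of $N$ reduces the equation $aN + br = 0$ to $\varepsilon(a)N + br = 0$, and comparing coefficients of group elements forces $b = kN$ with $\varepsilon(a) = -rk$ for some $k \in \Z$. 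Substituting gives $\phi(a,b) = -rkN + k\varepsilon(\wt u_r)N = -rkN + krN = 0$, so $\phi$ descends to the desired $g$.

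The decisive calculation for the two compositions is the identity $\wt u_s \wt u_r = \wt u_r \wt u_s = 1 + tN$ in $\Z G$. Since $u_s u_r = u_r u_s = 1$ in $\Z G/N$, the products lift to $1 + mN$ for some $m \in \Z$; equating augmentations, $\varepsilon(\wt u_s \wt u_r) = sr = 1 + t|G|$ and $\varepsilon(1 + mN) = 1 + m|G|$ force $m = t$. With this in hand, $g(f(1)) = -tN + \wt u_s \wt u_r = 1$ is immediate, while $f(g(N)) = N(\wt u_s r - tN) = srN - t|G|N = N$, using $rs - t|G| = 1$ and $N^2 = |G|N$, and $f(g(r)) = \wt u_r(\wt u_s r - tN) = (1+tN)r - trN = r$.

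No step is a serious obstacle; the proof is essentially a bookkeeping exercise. The subtlest point is the analysis of $\ker(\psi)$ underlying the construction of $g$, which relies on $r$ being a nonzero integer commuting with every element of $\Z G$.
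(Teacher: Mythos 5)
Your proof is correct and fills in exactly the verification that the paper compresses to ``we can check directly'' with a pointer to Swan's Lemma 6.3. The well-definedness check for $g$ via the kernel of $\psi \colon (a,b) \mapsto aN + br$, the identity $\wt u_s \wt u_r = \wt u_r \wt u_s = 1 + tN$, and the three composition computations are all accurate and constitute the same direct-verification approach the paper has in mind.
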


\begin{proof}
We can check directly that $f \circ g = \id$ and $g \circ f = \id$ (see also \cite[Lemma 6.3]{Sw60-II}).
\end{proof}

Let $G$ be a finite group with $k$-periodic cohomology. Then $H^k(G;\Z) \cong \Z/|G|$ (see \cite[XII.11.1]{CE56}) and the functor $H^k(-;\Z)$ induces a map
\[ \psi_k : \Aut(G) \to (\Z/|G|)^\times \]
where we have used the identifications $\Aut(H^k(G;\Z)) \cong \Aut(\Z/|G|) \cong (\Z/|G|)^\times$. We define $\Aut_k(G) := \IM(\psi_k) \le (\Z/|G|)^\times$. 
Since $G$ also has $ik$-periodic cohomology for all $i \ge 1$, we can consider $\psi_{ik}$ for any $i \ge 1$.
The following can be found in \cite[Lemma 6.7]{Ni20a}.

\begin{lemma} \label{lemma:psi_k-properties}
  Let $G$ be a finite group with $k$-periodic cohomology. If $i \ge 1$ and $\theta \in \Aut(G)$, then $\psi_{ik}(\theta) = \psi_k(\theta)^i$. In particular, $\Aut_{ik}(G) = (\Aut_k(G))^i$.
\end{lemma}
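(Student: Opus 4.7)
The plan is to exploit the ring structure on $H^*(G;\Z)$, so that the scalar by which $\theta^*$ acts on $H^{ik}(G;\Z)$ is recovered by raising the scalar on $H^k(G;\Z)$ to the $i$-th power. Concretely, I would first recall the standard characterisation of $k$-periodic cohomology: there exists a class $g \in H^k(G;\Z)$ whose cup product induces isomorphisms $g \cup (-) : H^j(G;M) \xrightarrow{\sim} H^{j+k}(G;M)$ for every $j \ge 0$ and every $\Z G$-module $M$. Applied with $M = \Z$ and $j = 0$, this shows that $g$ generates the cyclic group $H^k(G;\Z) \cong \Z/|G|$, and iterating the periodicity isomorphism shows that $g^i = g \cup \cdots \cup g$ generates $H^{ik}(G;\Z) \cong \Z/|G|$.

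Next I would use that $\theta \in \Aut(G)$ induces a graded ring endomorphism $\theta^* : H^*(G;\Z) \to H^*(G;\Z)$. Under the canonical identification $\Aut(\Z/|G|) \cong (\Z/|G|)^\times$, the definition of $\psi_k$ says precisely that $\theta^*(g) = \psi_k(\theta) \cdot g$. Since $\theta^*$ preserves cup products and scalars commute with cup products,
\[ \theta^*(g^i) \;=\; (\theta^*(g))^i \;=\; (\psi_k(\theta) \cdot g)^i \;=\; \psi_k(\theta)^i \cdot g^i, \]
and because $g^i$ is a generator of $H^{ik}(G;\Z)$ this forces $\psi_{ik}(\theta) = \psi_k(\theta)^i$, proving the first claim. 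The ``in particular'' statement then follows simply by taking images: $\Aut_{ik}(G) = \{\psi_{ik}(\theta) : \theta \in \Aut(G)\} = \{\psi_k(\theta)^i : \theta \in \Aut(G)\} = (\Aut_k(G))^i$.

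The only place where care is needed is in confirming that $\psi_k(\theta)$ really is the scalar by which $\theta^*$ acts on \emph{any} generator of $H^k(G;\Z)$, independent of the chosen generator. This is immediate once one observes that, for any finite cyclic group $A$, evaluation at a generator gives a canonical isomorphism $\Aut(A) \cong (\Z/|A|)^\times$ which is itself independent of the chosen generator. Beyond this bookkeeping I do not anticipate any genuine obstacle; the argument is essentially formal once the existence of a periodicity class $g$ is in hand, which is the content of the standard reference \cite[XII.11.1]{CE56}.
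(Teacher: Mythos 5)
The paper itself does not prove this lemma; it simply cites \cite[Lemma 6.7]{Ni20a}, so there is no in-text proof to compare against. Your argument via the cup-product ring structure on $H^*(G;\Z)$ is the standard and correct approach: $\theta^*$ is a ring endomorphism, a generator $g$ of $H^k(G;\Z)$ has $i$-th cup power $g^i$ generating $H^{ik}(G;\Z)$, and comparing $\theta^*(g^i) = (\theta^*g)^i = \psi_k(\theta)^i g^i$ with the defining property of $\psi_{ik}(\theta)$ gives the identity; the ``in particular'' is then just taking images. Your closing remark about the choice-independence of the identification $\Aut(\Z/|G|) \cong (\Z/|G|)^\times$ is also the right thing to point out.

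One small imprecision to repair: you assert that cup product with $g$ gives isomorphisms $H^j(G;M) \xrightarrow{\sim} H^{j+k}(G;M)$ for all $j \ge 0$, and then invoke the case $j=0$, $M=\Z$. This cannot hold in ordinary cohomology, since $H^0(G;\Z) \cong \Z$ is infinite while $H^k(G;\Z) \cong \Z/|G|$ is finite. The statement should be made either in Tate cohomology, where $\wh H^0(G;\Z) \cong \Z/|G|$ and the periodicity isomorphism does hold at $j=0$ (so $g = g \cup 1$ generates $\wh H^k = H^k$ since $k \ge 1$), or in ordinary cohomology restricted to $j \ge 1$, in which case you take $g$ to be an arbitrary generator of $H^k(G;\Z) \cong \Z/|G|$ (existence from \cite[XII.11.1]{CE56}) and use periodicity from degree $k$ upward to see that $g^i$ generates $H^{ik}(G;\Z)$. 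With either repair, the rest of your argument goes through unchanged.
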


\subsection{Stably free Swan modules}
\label{ss:swan-stablyfree}

From now on, $p$ will be an odd prime and we identify $Q_{8p} = \langle x,y \mid x^{2p}=y^2,yxy^{-1}=x^{-1}\rangle$.
We begin by showing the following using the results of Bentzen--Madsen \cite{BM83} which compute $T(Q_{8p})$ for $p$ an odd prime. 
For an integer $m$ coprime to $p$, let $\ord_p(m)$ denote the order of $m$ in $(\Z/p)^\times$ and let $\left(\frac{m}{p}\right) \in \{\pm 1\}$ denote the Legendre symbol for the quadratic residue of $m$ mod $p$.

\begin{prop} \label{prop:SF(Q8p)}
Let $p$ be an odd prime and let $\psi : (\Z/8p)^\times \to (\Z/4)^\times = \{ \pm 1\}$ denote reduction mod $4$. Then:
\[
\SF(Q_{8p}) =  
\begin{cases}
\{r : r \equiv \pm 1 \mod 8 \},  \hspace{17mm} \text{if {\normalfont (}$p \equiv -1 \mod 8${\normalfont)} or {\normalfont (}$p \equiv 1 \mod 8$, $\ord_p(2)$ is odd\,{\normalfont)}}\\
\{r : r \equiv \pm 1 \mod 8, \left(\frac{r}{p}\right) = 1 \},  \,\,\text{if {\normalfont (}$p \equiv -3 \mod 8${\normalfont)} or {\normalfont (}$p \equiv 1 \mod 8$, $\ord_p(2)$ is even{\normalfont)}} \\
\{r : r \equiv \pm 1 \mod 8, \psi(r) \cdot \left(\frac{r}{p}\right) = 1 \}, \,\,\text{if $p \equiv 3 \mod 8$.}
\end{cases}
\]
\end{prop}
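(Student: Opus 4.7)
The plan is to deduce the description of $\SF(Q_{8p}) = \ker(S)$ directly from the explicit computation of the Swan subgroup $T(Q_{8p}) = \IM(S)$ in Bentzen-Madsen, via the exact sequence
\[
1 \to \SF(Q_{8p}) \to (\Z/8p)^\times \xrightarrow{S} T(Q_{8p}) \to 1.
\]
Under the Chinese remainder isomorphism $(\Z/8p)^\times \cong (\Z/8)^\times \times (\Z/p)^\times$ (of order $4(p-1)$), the three subgroups displayed in the proposition have orders $2(p-1)$, $p-1$ and $p-1$, hence indices $2$, $4$, $4$. Bentzen-Madsen show that $|T(Q_{8p})| = 2$ in the first case and $|T(Q_{8p})| = 4$ in the other two, so these indices agree. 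It therefore suffices, in each case, to verify that the order-$2$ characters of $(\Z/8p)^\times$ chosen in the statement are precisely those cutting out $\ker(S)$.

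Concretely, Bentzen-Madsen compute $T(Q_{8p})$ from the Milnor pullback square for $\Z Q_{8p}$, with corners $\Z Q_8$ and a maximal order $\Lambda$ in $\Q(\zeta_p) \otimes_\Q \Q Q_8$, by analysing the cokernel of the induced map on unit groups. One character of $T(Q_{8p})$ always comes from the $\Z Q_8$-corner and is detected by the class of $r$ in $(\Z/8)^\times/\{\pm 1\}$, giving the common condition $r \equiv \pm 1 \pmod 8$ across all three cases. The second character comes from the $\Lambda$-corner and its precise form is governed by the splitting of the prime $2$ in $\Q(\zeta_p)$. When $p \equiv -1 \pmod 8$, or when $p \equiv 1 \pmod 8$ with $\ord_p(2)$ odd, this second character collapses, because $(\Z[\zeta_p]/2)^\times$ already realises the target via the Frobenius, and so no further condition is imposed on $\SF$. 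When $p \equiv -3 \pmod 8$, or when $p \equiv 1 \pmod 8$ with $\ord_p(2)$ even, the second character is detected purely by the Legendre symbol $(r/p)$. When $p \equiv 3 \pmod 8$, a local obstruction at $2$ couples the two factors, and one obtains the diagonal relation $\psi(r) \cdot (r/p) = 1$.

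The main obstacle is case three, where the kernel condition is genuinely diagonal: neither $\psi(r) = 1$ nor $(r/p) = 1$ characterises $\ker(S)$ on its own. Establishing this amounts to tracking how the class of $-1 \in (\Z[\zeta_p])^\times$ in the local unit group at $2$ enters the pullback; in the regime $p \equiv 3 \pmod 8$ this class carries nontrivial data that links the mod-$4$ character to the Legendre symbol. Distinguishing case three carefully from $p \equiv -3 \pmod 8$, where the product description survives, is the delicate step. Once it is in place, the remainder is index bookkeeping against the orders of $T(Q_{8p})$ provided by Bentzen-Madsen, together with \cref{lemma:ZG/N-unit-to-isomorphism} to exhibit explicit stably free (indeed free) Swan modules in the claimed kernel.
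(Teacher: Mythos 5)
Your starting point is right---the result is deduced from the Bentzen--Madsen computation of $T(Q_{8p})$---and your observation that the three displayed subgroups have indices $2$, $4$, $4$ in $(\Z/8p)^\times$, matching $|T(Q_{8p})|$, is a correct and useful sanity check. But the argument as written has a genuine gap: knowing the index of $\ker(S)$ does not determine which index-$4$ subgroup it is, and your account of which two order-$2$ characters cut it out (``one character always comes from the $\Z Q_8$-corner\dots the second character comes from the $\Lambda$-corner and its precise form is governed by the splitting of the prime $2$ in $\Q(\zeta_p)$\dots'') is a narrative of what a proof would need to show, not a proof. You acknowledge this yourself when you call the $p\equiv 3\bmod 8$ case ``the main obstacle'' and ``the delicate step'' and then defer it. The alternative you suggest---exhibiting explicit free Swan modules in the claimed subgroup via \cref{lemma:ZG/N-unit-to-isomorphism}, then closing by index-counting---is also far from routine: deciding freeness of a single Swan module is exactly the kind of computation that occupies most of Section 2 of the paper for $Q_{56}$ alone.

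The paper sidesteps all of this by citing a sharper result: \cite[Addendum~3.6]{BM83} records not just $|T(Q_{8p})|$ (Theorem~3.5) but the explicit kernels $\ker(S_1)=\{r: r\equiv\pm1\bmod 8\}$ and $\ker(S_2)$ in each case, so the proposition is literally $\SF(Q_{8p})=\ker(S_1)\cap\ker(S_2)$ and nothing remains to be re-derived. Your write-up reads as if only the group order of $T(Q_{8p})$ were available; once the Addendum is in hand there is no character-matching to do and no delicate coupling in the $p\equiv3\bmod 8$ case to track.
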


\begin{remark}
This contradicts \cite[Theorem 1]{LT73}, which claims erroneously that $(N,r)$ is always stably free over $Q_{8p}$ provided $r \equiv \pm 1 \mod 8$. For example, if $p = 5$ and $r=7 \in (\Z/8p)^\times$, then $r \equiv -1 \mod 8$ but $(N,r)$ is not stably free since $p \equiv -3 \mod 8$ and $\left(\frac{r}{p}\right) = -1$.
\end{remark}

\begin{proof}
In \cite[Theorem 3.5]{BM83}, it is shown that
$T(Q_{8p}) \cong S(1) \oplus S(p)$
where $S(1) \cong \Z/2$ and 
\[
S(p) =
\begin{cases}
\Z/2, & \text{if {\normalfont (}$p \equiv \pm 3 \mod 8${\normalfont)} or {\normalfont (}$p \equiv 1 \mod 8$ and $\ord_p(2)$ is even{\normalfont)}} \\
0, & \text{if {\normalfont (}$p \equiv -1 \mod 8${\normalfont)} or {\normalfont (}$p \equiv 1 \mod 8$ and $\ord_p(2)$ is odd{\normalfont)}.}	
\end{cases}
 \]
Let $S = (S_1,S_2) : (\Z/8p)^\times \to T(Q_{8p}) \cong S(1) \oplus S(p)$ denote the two components of the Swan map. 
In \cite[Addendum 3.6]{BM83}, it is shown that $\ker(S_1) = \{r : r \equiv \pm 1 \mod 8\}$ and that
\[ \ker(S_2) = 
\begin{cases}
	\{ r : \left(\frac{r}{p}\right) = 1\}, & \text{if $p \equiv -3 \mod 8$, or $p \equiv 1 \mod 8$, $\ord_p(2)$ is even} \\
	\{ r : \psi(r) \cdot \left(\frac{r}{p}\right) = 1\}, & \text{if $p \equiv 3 \mod 8$.}
\end{cases}
 \]
The result follows since $\SF(Q_{8p}) = \ker(S) = \ker(S_1) \cap \ker(S_2)$.
\end{proof}

\subsection{Free Swan modules}
\label{ss:swan-free}

We will now compute $\FF(Q_{8p})$.
First note that this coincides with $\IM(\varepsilon : (\Z Q_{8p}/N)^\times \to (\Z/8p)^\times)$ and so it suffices to compute the augmentations of units in $(\Z Q_{8p}/N)^\times$. However, we will take a different approach, simply viewing $(N,r)$ as a projective $\Z Q_{8p}$-module and using pullback square methods to determine when it is free.

In \cite[Section 11]{Sw83}, Swan gave a general strategy for classifying projective $\Z Q_{8p}$-modules using the pullback square: 
\begin{equation}
\begin{tikzcd}
	\Z Q_{8p} \ar[r,"a_2"] \ar[d,"a_1"] & \LL \ar[d,"b_2"] \\
	\Z D_{4p} \ar[r,"b_1"] & \F_2 D_{4p} 
\end{tikzcd}
\label{eq:square1}
\end{equation}
where $\LL = \Z Q_{8p}/(x^{2p}+1)$, where we identify $D_{4p} = \langle x,y \mid x^{2p}=y^2=1,yxy^{-1}=x^{-1}\rangle$, and where each map is the natural map $x \mapsto x$, $y \mapsto y$.
This approach was used to completely classify the projective $\Z Q_{8p}$-modules for $p = 3, 5$ \cite[Theorem 11.14]{Sw83}.

By Milnor patching \cite{Mi71} applied to the diagram in~\cref{eq:square1}, a projective $\Z Q_{8p}$-module $P$ with $(a_1)_\#(P) \cong \Z D_{4p}$ and $(a_2)_\#(P) \cong \LL$ has an associated unit $u_P \in \F_2 D_{4p}^\times$ such that $P$ is determined up to isomorphism by the image of $u_P$ in the double coset quotient $\Z D_{4p}^\times \backslash \F_2 D_{4p}^\times /{\LL^\times}$.
The aim of this section will be to establish the following.

\begin{prop}\label{prop:swansetup}
Let $p$ be an odd prime and let $r \in \Z$, $\gcd(r, 8p) = 1$.
Let $\zeta_p$ be a primitive $p$th root of unity, let $\lambda_p = \zeta_p+\zeta_p^{-1}$, let $R_p = \Z[\lambda_p]$ and let $\mathfrak p$ be the unique prime ideal of $R_p$ over $p$.
\begin{clist}{(i)}
\item
    We have $(a_1)_\#((N, r)) \cong \Z D_{4p}$ and $(a_2)_\#((N, r)) \cong \Lambda$.
\item
Let $s,t \in \Z$, $s \geq 1$ be such that $rs = 1+t|G|$ and $m = (s-1)/2$. The isomorphism class of $(N, r)$ is determined by the double coset $[u_{(N, r)}] \in \Z D_{4p}^\times \backslash \F_2 D_{4p}^\times /{\LL^\times}$, where
\[ u_{(N,r)} = 1+(1+x^r + \dotsb +  (x^r)^{m-1})x^{r-1}(x+y) + tN \in \F_2 D_{4p}^\times. \]
\item
There exist maps $\rho_1 \colon \F_2 D_{4p}^\times \to \Z/2$, $\rho_2 \colon \F_2 D_{4p}^\times \to (R_p/2R_p)[C_2]^\times/(1 + \mathfrak p R_p C_2)$ inducing a bijection
\[
\overline \rho \colon \Z D_{4p}^\times \backslash \F_2 D_{4p}^\times /{\LL^\times}
\longrightarrow \left(\Z/2 \oplus \frac{(R_p/2R_p)[C_2]^\times}{1+\mathfrak{p}R_pC_2}\right) \Big/ (\rho_1, \rho_2)(\Lambda^\times), \quad [u] \mapsto [(\rho_1(u),\rho_2(u))]
\] 
with $\overline \rho([1]) = 0$.
In particular, $(N, r)$ is free if and only if $\overline \rho([u_{(N, r)}]) = 0$.
\item
We have
\[ \rho_1(u_{(N,r)}) = 
\begin{cases}
0, & r \equiv \pm 1 \mod 8 \\
1, & r \equiv \pm 3 \mod 8. 
\end{cases}
\]
\end{clist}
\end{prop}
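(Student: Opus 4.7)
The plan is to address the four parts in sequence, using Milnor patching for \cref{eq:square1} together with an explicit application of Lemma 2.2.

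For part (i), the $\Lambda$-side follows from the observation that $\sum_{j=0}^{3} y^j = (1+y)(1+y^2) = (1+y)(1+x^{2p}) = 0$ in $\Lambda$, hence $a_2(N) = 0$ and $(a_2)_\#((N,r)) = r\Lambda \cong \Lambda$ via multiplication by $r$ (injective since $\Lambda$ is $\Z$-torsion-free). On the $\Z D_{4p}$-side, the image ideal $(2N_D, r)$ coincides with the Swan module $(N_D, r)$ in $\Z D_{4p}$, via the identity $N_D = \tfrac{r+1}{2}\cdot 2N_D - r \cdot N_D$ (valid since $r$ is odd). Since $\Q D_{4p}$ satisfies the Eichler condition, $\Z D_{4p}$ enjoys locally free cancellation, so the stably free module $(a_1)_\#((N,r))$ (stably free from $(N,r)\oplus(N,r^{-1}) \cong \Z Q_{8p}^2$) is actually free.

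For part (ii), the $\Lambda$-side generator of $(a_2)_\#((N,r)) = r\Lambda$ is simply $r$. On the $\Z D_{4p}$-side, apply Lemma 2.2 with $\wt u_s \in \Z D_{4p}$ taken as the image of $1 + (1 + x^r + \cdots + (x^r)^{m-1})x^{r-1}(x + y) + tN \in \Z Q_{8p}$. A direct augmentation check gives $\varepsilon(\wt u_s) \equiv 1 + 2m \equiv s \pmod{4p}$, and one verifies $\wt u_s$ is a unit modulo $N_D$; Lemma 2.2 then produces the generator $\wt u_s r - tN$ of $(N_D, r)$ (the parameter $tN$ rather than a separate $\Z D_{4p}$-parameter is correct, since $rs = 1 + 2t\cdot 4p$ and $N = 2N_D$). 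The patching unit in $\F_2 D_{4p}^\times$ is the ratio of the two generators modulo $2$; since $r \equiv 1$ and $tN = 2t N_D \equiv 0 \pmod 2$, this reduces precisely to $\wt u_s \bmod 2$, yielding the stated formula.

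For part (iii), exploit $D_{4p} \cong D_{2p} \times C_2$ (valid for $p$ odd, via the central involution $x^p$) to obtain $\F_2 D_{4p} \cong \F_2 D_{2p} \otimes_{\F_2} \F_2[C_2]$. The Wedderburn decomposition $\Q D_{2p} \cong \Q \oplus \Q \oplus M_2(\Q(\lambda_p))$ descends modulo $2$ and, after tensoring with $\F_2[C_2]$, induces a block decomposition of (the semisimple quotient of) $\F_2 D_{4p}$. The map $\rho_1$ projects onto a linear $\F_2[C_2]$-component and then onto $\Z/2$ (absorbing the image of $\Z D_{4p}^\times$), while $\rho_2$ projects onto the matrix block over $(R_p/2R_p)[C_2]$, modulo the subgroup $1+\mathfrak p R_p C_2$ (again absorbing the image of $\Z D_{4p}^\times$). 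The claimed bijection for $\overline{\rho}$ then follows from Milnor's $K_1$ exact sequence for the pullback, together with explicit analysis of the images of $\Z D_{4p}^\times$ and $\Lambda^\times$ in $\F_2 D_{4p}^\times$; $\overline{\rho}([1]) = 0$ is immediate.

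For part (iv), project $u_{(N,r)}$ to the abelianization $\F_2 D_{4p}^{\mathrm{ab}} \cong \F_2[u, v]/(u^2, v^2)$ (where $u = \bar x - 1$, $v = \bar y - 1$, using $D_{4p}^{\mathrm{ab}} \cong C_2 \times C_2$). Since $r$ is odd, $x^r \mapsto 1 + u$ and $x^{r-1}(x + y) \mapsto u + v$; the geometric sum $\sum_{i=0}^{m-1}(x^r)^i$ reduces according to $m \bmod 4$, which in turn is determined by $r \bmod 8$ via $s \equiv r \pmod 8$ and $m = (s-1)/2$. A case split yields $u_{(N,r)}^{\mathrm{ab}} \in \{\, 1,\ 1+u+v,\ 1+uv,\ 1+u+v+uv\,\}$ for $r \equiv 1, 3, -3, -1 \pmod 8$ respectively. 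In the basis $A = 1+u$, $B = 1+v$, $C = 1+uv$ of $(\F_2 D_{4p}^{\mathrm{ab}})^\times \cong (\Z/2)^3$, these decompose as $\mathrm{id}, ABC, C, AB$; since $\rho_1$ factors through the projection to the $C$-component, its values are $0, 1, 1, 0$, matching the claim. The main obstacle is part (iii): verifying that $(\rho_1, \rho_2)$ with the displayed targets really does cut out the double coset, which requires detailed analysis of the unit groups of $\Z D_{4p}$, $\Lambda$, and $\F_2 D_{4p}$ beyond superficial Wedderburn considerations.
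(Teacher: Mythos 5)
Your overall strategy mirrors the paper's: apply Milnor patching to the square~\eqref{eq:square1}, exhibit the patching unit explicitly, and decompose the double coset quotient via maps $\rho_1,\rho_2$. Parts (i) and (iv) are essentially sound as alternatives (using the Eichler condition for $\Z D_{4p}$ in place of the explicit free generator, and the abelianisation to $\F_2 V$, which is what the paper's $\rho_1$ amounts to). However, there are two genuine issues.

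First, and most significantly, part (iii) is not actually proved. You yourself flag this: ``The main obstacle is part (iii): verifying that $(\rho_1,\rho_2)$ with the displayed targets really does cut out the double coset, which requires detailed analysis \dots\ beyond superficial Wedderburn considerations.'' This is precisely where the content lies. The paper's route is to observe that the Eichler condition makes $b_1(\Z D_{4p}^\times)$ normal in $\F_2 D_{4p}^\times$ (so the double coset set is $\coker(\Lambda^\times \to \F_2 D_{4p}^\times/\Z D_{4p}^\times)$), and then to invoke Swan's detailed description of $\F_2 D_{4p}^\times/\Z D_{4p}^\times$ from \cite[Lemma~11.1, Lemma~10.5]{Sw83}, which constructs $\rho_1,\rho_2$ exactly as in~\eqref{eq:rhos} (via the quotient to $\F_2 V$ and the map $\F_2 D_{4p} \twoheadrightarrow M_2(Z/2Z)$ followed by $\det$, $Z = \Z[x+x^{-1}]$). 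Your sketch gestures at a decomposition $\F_2 D_{4p} \cong \F_2 D_{2p} \otimes_{\F_2}\F_2[C_2]$ (which is valid, since $D_{4p}\cong D_{2p}\times C_2$ for $p$ odd via $x^p$), but this is a different decomposition from Swan's and you would still need to carry out the non-trivial unit-group analysis to identify the quotient $\F_2 D_{4p}^\times/\Z D_{4p}^\times$ with $\Z/2 \oplus (R_p/2R_p)[C_2]^\times/(1+\mathfrak p R_p C_2)$ and to check $\overline\rho$ is well-defined on $\Lambda^\times$-orbits.

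Second, the bookkeeping of the $tN$ term in (ii)/(iv) is not consistent with the stated formula. The paper's convention (confirmed by the worked example $15\cdot15 = 1+8\cdot 28$ and by the line ``since $\lvert G\rvert = 4p$'' in the proof of (iv)) is $rs = 1 + t\cdot 4p$ with $N$ meaning $N_{D_{4p}}$ in the formula for $u_{(N,r)}$, so the term $tN$ is $tN_{D_{4p}}$, which is \emph{not} always zero mod~$2$; the pair $(m\bmod 4, t\bmod 2)$ together determines $\rho_1(u_{(N,r)})$, and this joint dependence is what makes the formula independent of the choice of $s$. Your claim ``$tN = 2tN_D \equiv 0 \pmod 2$'' amounts to a different normalisation ($t$ taken relative to $\lvert Q_{8p}\rvert = 8p$), under which the displayed $+tN$ term in the statement would be vacuous. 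Relatedly, your assertion in (iv) that ``$m\bmod 4$ is determined by $r\bmod 8$ via $s\equiv r\pmod 8$'' is not automatic: $s$ is only pinned down mod $4p$, hence $s\bmod 8$ is not determined by $r$. One \emph{can} choose $s$ with $s\equiv r\pmod 8$ (compatible with $s\equiv r^{-1}\pmod{4p}$ since $r^{-1}\equiv r\pmod 4$), and this choice forces $t$ even, after which your computation is correct and matches the paper; but that choice, and the fact that $\rho_1(u_{(N,r)})$ is independent of it (because $u_{(N,r)}$ for different $s$ differ by an element of $b_1(\Z D_{4p}^\times)$, which $\rho_1$ kills), should be made explicit, whereas the paper's argument handles all $(s,t)$ directly via the pair $(m\bmod 4, t\bmod 2)$.
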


The rest of this section is devoted to the proof of Proposition~\ref{prop:swansetup}.
We begin by determining free generators over $\Z D_{4p}$ for which will use the following result.

\begin{lemma} \label{lemma:ZG/N-unit-dihedral}
Let $n \ge 2$ and let $D_{2n} = \langle x,y \mid x^{n}=y^2=1,yxy^{-1}=x^{-1}\rangle$ denote the dihedral group of order $2n$. For each $r \ge 1$ such that $(r,2n)=1$ write $k = (r-1)/2$ and define
\[ u_r = 1+x+ \cdots + x^k + y + xy + \cdots + x^{k-1}y \in \Z D_{2n} / N_{D_{2n}}. \]
Then
$u_r$ depends only on $r \mod 2n$ (i.e.~on $k \mod n$) and
$u_r \in (\Z D_{2n} / N_{D_{2n}})^\times$ is a unit with inverse
\[ u_r^{-1} = 1+(1+x^{2k+1}+(x^{2k+1})^2 + \dotsb + (x^{2k+1})^{m-1})x^{2k}(x+y), \]
where $m = (s-1)/2$ and $s \ge 1$ is such that $rs \equiv 1 \mod 2n$.  We have $\varepsilon(u_r) = r \in (\Z/2n)^\times$.
\end{lemma}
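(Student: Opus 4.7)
My plan is to verify the three assertions of the lemma by direct computation in the quotient ring $\Z D_{2n}/N_{D_{2n}}$, exploiting the identity $N_{D_{2n}} = s_x(1+y)$, where $s_x = 1+x+\cdots+x^{n-1}$, together with the basic relations $y x^i = x^{-i}y$, $y^2 = 1$ and $x \cdot s_x = s_x$. The augmentation claim $\varepsilon(u_r) = r$ is immediate, as $u_r$ is a sum of $(k+1) + k = 2k+1 = r$ group elements, each with augmentation $1$.

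For the claim that $u_r$ depends only on $k \bmod n$, I would show that $u_{r+2n} - u_r \in \Z \cdot N_{D_{2n}}$. Incrementing $k$ by $n$ appends the extra terms $x^{k+1} + \cdots + x^{k+n}$ and $x^k y + \cdots + x^{k+n-1}y$, which respectively equal $x^{k+1} s_x = s_x$ and $x^k s_x \cdot y = s_x \cdot y$; their sum is $s_x(1+y) = N_{D_{2n}}$.

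For invertibility, write $u_r = A + By$ with $A = 1+x+\cdots+x^k$ and $B = 1+x+\cdots+x^{k-1}$, and let the proposed inverse be $v = 1 + T x^{r-1}(x+y)$ with $T = 1+x^r+\cdots+x^{(m-1)r}$. Expanding $u_r \cdot v$ and separating the $y$-free from the $y$-containing contributions via $y x^j = x^{-j}y$ produces several geometric sums in $x^{\pm r}$. The central input is the congruence $rs \equiv 1 \bmod 2n$, which rearranges to $mr + k \equiv 0 \bmod n$; this allows the partial sums that arise to be shifted by multiples of $n$ so as to close up into complete cycles $s_x$, which when paired with the appropriate $(1+y)$ factor collapse to multiples of $N_{D_{2n}}$. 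To avoid verifying both $u_r v \equiv 1$ and $v u_r \equiv 1$, I would note that $\Z D_{2n}/N_{D_{2n}}$ embeds into the finite-dimensional $\Q$-algebra $\Q D_{2n}/N_{D_{2n}}$, which is Artinian, so a one-sided inverse is automatically two-sided.

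The main obstacle is the bookkeeping in the invertibility step: the six summands arising from $(A+By)(1 + T x^r + T x^{r-1}y)$ give geometric progressions in both $x^r$ and $x^{-r}$, and their recombination into complete cycles $s_x$ is the crux of the argument. Once $mr \equiv -k \bmod n$ is applied to identify which progressions close up, the cancellations modulo $N_{D_{2n}}$ are routine; however, getting the starting indices and the interaction between the $y$-twisted and untwisted pieces correct requires some care.
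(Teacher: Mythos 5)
Your proposal handles the dependence claim and the augmentation claim in essentially the same way as the paper (showing $u_{r+2n}-u_r = s_x(1+y) = N$ and counting group elements). The invertibility step, however, takes a genuinely different route. The paper proves the stronger multiplicative identity
\[
u_{rs} \;=\; \bigl(1+(1+x^{2k+1}+\dotsb+(x^{2k+1})^{m-1})x^{2k}(x+y)\bigr)\,u_r
\]
for \emph{any} $s$ coprime to $2n$, via the single closed-form identity $x^k(x+y)u_r = \Sigma_{2k}(x+y)$ (where $\Sigma_i = 1 + x + \dotsb + x^i$) followed by a telescoping sum over $j = 0,\dotsc,m-1$; specialising $rs \equiv 1 \bmod 2n$ then gives $v u_r = u_1 = 1$. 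Your proposal instead expands $u_r v$ directly via the decomposition $u_r = A + By$, pushes $y$ to the right, and matches up the resulting geometric progressions in $x^{\pm r}$ against multiples of $s_x$ using $mr + k \equiv 0 \bmod n$. Both approaches work. The paper's version buys you a cleaner argument and the extra structural fact that the $u_r$'s are multiplicative up to the action of the inverse-like elements; your version is more direct but, as you correctly flag, requires careful index bookkeeping to see that each residue class of exponents mod $n$ appears exactly $(2km+k+m)/n$ times (with one extra at $0$ in the $y$-free part). Your observation that one may avoid checking both sides by embedding $\Z D_{2n}/N$ into the Artinian algebra $\Q D_{2n}/N$ is correct and is something the paper leaves implicit; note that the embedding does require the (easy but worth saying) remark that $\Z D_{2n}/\Z N$ is torsion-free because $N$ is a primitive vector in $\Z D_{2n}$.
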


\begin{remark}
That $u_r \in (\Z D_{2n} / N_{D_{2n}})^\times$ is a unit is an exercise in \cite[Exercise 53.3]{CR87}. However, the identities listed in the hint below the exercise are false and so would lead to an incorrect formula for $u_r^{-1}$.
\end{remark}

\begin{proof}
To show $u_r$ depends only on $r \mod 2n$, we can verify directly that $u_{r+2n} = u_r + N \equiv u_r \in \Z D_{2n} / N_{D_{2n}}$ for all $r$.
We now claim that, for $s \ge 1$ such that $(s,2n)=1$ and $m = (s-1)/2$, we have
\[ u_{rs} = (1+(1+x^{2k+1}+(x^{2k+1})^2 + \dotsb + (x^{2k+1})^{m-1})x^{2k}(x+y))u_r. \]
This implies the formula for $u_r^{-1}$ since $rs \equiv 1 \mod 2n$ implies that $u_{rs} = u_1 = 1$.

For each $i \ge 1$, let $\Sigma_i = 1+ x+ \dotsb + x^i \in \Z D_{4p}$ and $\Sigma_i^{-1} = 1+ x^{-1} + \dotsb + x^{-i}$. Note that $u_r = \Sigma_k + \Sigma_{k-1}y$. Using the identities $y \Sigma_i = \Sigma_i^{-1} y$, $x^{i}\Sigma_i^{-1} = \Sigma_i$ and $\Sigma_i+x^{i+1} \Sigma_j = \Sigma_{i+j+1}$, we have
\[ x^{k}(x+y)u_r = x^{k+1}(\Sigma_k + \Sigma_{k-1}y) + x^{k}(\Sigma_k^{-1}y + \Sigma_{k-1}^{-1}) = \Sigma_{2k}(x+y). \]	
By summing this identity, it follows that
\[ (1+(1+x^{2k+1}+(x^{2k+1})^2 + \dotsb +  (x^{2k+1})^{m-1})x^{2k}(x+y))u_r = \Sigma_{k+m(2k+1)} + \Sigma_{k+m(2k+1)-1}y = u_{rs} \]  	
since $2(k+m(2k+1))+1 = (2k+1)(2m+1) = rs$.	
\end{proof}

We will now prove \cref{prop:swansetup}~(i)~and~(ii).

\begin{lemma}
Let $s,t \in \Z$, $s \geq 1$ be such that $rs = 1+t|Q_{8p}|$ and $m = (s-1)/2$. The isomorphism class of $(N, r)$ is determined by the double coset $[u_{(N, r)}] \in \Z D_{4p}^\times \backslash \F_2 D_{4p}^\times /{\LL^\times}$, where
\[ u_{(N,r)} = 1+(1+x^r + \dotsb +  (x^r)^{m-1})x^{r-1}(x+y) + tN \in \F_2 D_{4p}^\times. \]
\end{lemma}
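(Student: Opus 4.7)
My plan is to apply the Milnor patching recipe directly: the double coset $[u_{(N,r)}]$ arises as the ratio of isomorphisms $\alpha \colon \Z D_{4p} \to (a_1)_\#((N,r))$ and $\beta \colon \LL \to (a_2)_\#((N,r))$ once both have been pushed into $\F_2 D_{4p} \otimes_{\Z Q_{8p}} (N,r)$, which embeds into $\F_2 D_{4p}$ via $\lambda \otimes z \mapsto \lambda \cdot (b_i a_i)(z)$.

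To construct $\alpha$, I would identify $(a_1)_\#((N,r))$ with the Swan ideal $(N_{D_{4p}}, r) \subset \Z D_{4p}$ using part (i). The same $s$ with $rs = 1 + 8pt$ also satisfies $rs \equiv 1 \pmod{4p}$, so \cref{lemma:ZG/N-unit-dihedral} applied to $D_{4p}$ supplies a unit $u_r \in (\Z D_{4p}/N_{D_{4p}})^\times$ whose inverse lifts to $\wt u_s = 1 + (1 + x^r + \dotsb + (x^r)^{m-1})x^{r-1}(x+y)$. Feeding this into \cref{lemma:ZG/N-unit-to-isomorphism} for $G = D_{4p}$ gives $\alpha(1) = \wt u_s r - 2tN_{D_{4p}}$, since $(rs-1)/|D_{4p}| = 2t$. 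For $\beta$, I would use that $a_2(N_{Q_{8p}}) = 0$ in $\LL$, because $x^{2p} \equiv -1$ forces $\sum_{i=0}^{4p-1} x^i = 0$, so the image of $(N,r)$ under $a_2$ is the principal left ideal $\LL \cdot r$. Projectivity of $(N,r)$ makes the canonical map $(a_2)_\#((N,r)) \to \LL$ injective with image $\LL r$, and because $r$ is a central non-zero-divisor in $\LL$, right multiplication by $r$ provides $\LL \xrightarrow{\sim} \LL r$; composing, we obtain $\beta$ with $\beta(1) = 1 \otimes r$.

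Reducing modulo $2$, the element $(b_1 \otimes \alpha)(1)$ becomes $\wt u_s r - 2t N_{D_{4p}} \equiv \wt u_s \pmod 2$ since $r$ is odd and $2t$ is even, while $(b_2 \otimes \beta)(1)$ becomes $r \equiv 1 \pmod 2$. The Milnor unit $u_{(N,r)} \in \F_2 D_{4p}^\times$ determined by $(b_1 \otimes \alpha)(1) = u_{(N,r)} \cdot (b_2 \otimes \beta)(1)$ is therefore the reduction of $\wt u_s$ modulo $2$, matching the stated formula; the summand $tN$ is best read as $tN_{Q_{8p}}$, which is $0$ in $\F_2 D_{4p}$ under either reduction map and is recorded only to fix a convenient integral lift used later for the computation of $\rho_1$ and $\rho_2$. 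The main obstacle I anticipate is the bookkeeping between the $Q_{8p}$-parameter $t$ and its $D_{4p}$-counterpart $2t$, and verifying that $\alpha(1)$ really is a unit in $\Z D_{4p}$; the latter follows from the identity $\wt u_s \wt u_r = 1 + 2t N_{D_{4p}}$, which yields $\alpha(1) \cdot \wt u_r = r$ and $\alpha(1) \cdot N_{D_{4p}} = N_{D_{4p}}$, confirming that $\alpha$ is a genuine $\Z D_{4p}$-module isomorphism.
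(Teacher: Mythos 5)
Your overall strategy — identifying $a_1((N,r))$ with the Swan ideal $(N_{D_{4p}},r)$, $a_2((N,r))$ with $\LL r$, invoking \cref{lemma:ZG/N-unit-dihedral} and \cref{lemma:ZG/N-unit-to-isomorphism} to get a free generator, and then reading off the Milnor unit — is exactly the paper's, and that part is fine. The problem is your treatment of the summand $tN$. Working with $s,t$ satisfying $rs=1+8pt$ you correctly get that $(rs-1)/\lvert D_{4p}\rvert = 2t$, so \cref{lemma:ZG/N-unit-to-isomorphism} over $D_{4p}$ gives the generator $\wt u_s r - 2tN_{D_{4p}}$, whose image under $b_1$ is just $\wt u_s$ with no norm term. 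You then explain away the stated $tN$ by reading $N$ as $N_{Q_{8p}}$ (which indeed vanishes in $\F_2 D_{4p}$) and calling it a bookkeeping artefact. That is not the intended reading: in the displayed formula $N$ is the norm element of $D_{4p}$, and the term is used essentially in the proof of part (iv) (\cref{lem:rho1u}), where $tN$ becomes $t(1+x+y+xy)\in \F_2 V$, which is nonzero precisely when $t$ is odd. Under your reading the $tN$ term is always zero and the lemma would contain a pointless summand, which should have been a red flag.

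What is actually going on is that the constraint in the statement should read $rs = 1 + t\lvert D_{4p}\rvert = 1+4pt$, not $rs = 1+t\lvert Q_{8p}\rvert$; the proof of \cref{lem:rho1u} explicitly takes $\lvert G\rvert = 4p$, and the arithmetic in \cref{lemma:rho2u} (e.g.\ $15\cdot 15 = 1 + 8\cdot 28$, giving $t=8$) confirms this. With that reading, \cref{lemma:ZG/N-unit-to-isomorphism} over $D_{4p}$ gives generator $\wt u_s r - tN_{D_{4p}}$, and $b_1(\wt u_s r - tN_{D_{4p}}) = \wt u_s - tN_{D_{4p}} = \wt u_s + tN_{D_{4p}}$ in characteristic $2$, exactly matching the displayed formula including its $tN$ term. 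One can also check that this version is invariant under replacing $s$ by $s+4p$: then $m\mapsto m+2p$ shifts $\wt u_s$ by $N_{D_{4p}}$, while $t\mapsto t+r$ shifts $tN$ by $N_{D_{4p}}$, and the two cancel — whereas under the $\lvert Q_{8p}\rvert$ reading the formula is not invariant under $s\mapsto s+8p$, another sign that reading is not the intended one. So: right approach, correct instinct that the $t$-versus-$2t$ bookkeeping is the sticking point, but the resolution is to correct the modulus on $s,t$, not to declare $tN$ vacuous; as written your argument only reproduces the stated formula when $t$ happens to be even.
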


\begin{proof}
We will use the version of Milnor patching for ideals given in \cite[Lemma 2.3]{HN25}. 
We have $a_2((N,r)) = \LL \cdot N + \LL \cdot r = \LL \cdot r \le \LL$ since $N = (1+x^{2p})(1+ x + \cdots + x^{2p})(1+y) = 0 \in \LL$ and $a_1((N,r)) = (2N_{D_{4p}},r) \le \Z D_{4p}$ where $N_{D_{4p}} = \sum_{g \in D_{4p}} g$. Since $r$ is odd, we can write $r=2k+1$ so that 
\[N_{D_{4p}} = r N_{D_{4p}} - k(2N_{D_{4p}}) \in (2N_{D_{4p}},r)\]
and so $a_1((N,r)) = (N_{D_{4p}},r)$ is a Swan module. 

Let $s,t \in \Z$, $s \geq 1$ be such that $rs = 1+t|Q_{8p}|$, let $u_r \in (\Z D_{4p}/N_{D_{4p}})^\times$ be as defined in \cref{lemma:ZG/N-unit-dihedral}, let $u_s = u_r^{-1} \in (\Z D_{4p}/N_{D_{4p}})^\times$ and let $\wt u_s \in \Z D_{4p}$ be the unique lift of $u_s$ such that $\varepsilon(\wt u_s) = s \in \Z$.
By \cref{lemma:ZG/N-unit-to-isomorphism}, we have that 
$(N_{D_{4p}},r) = \Z D_{4p} \cdot (\wt u_s r - tN)$.
Since $r$ is odd, we have that  $b_2(r) = 1 \in \F_2 D_{4p}^\times$. Thus the conditions of \cite[Lemma 2.3]{HN25} are satisfied and $(N,r)$ is determined by $[u_{(N, r)}] \in \Z D_{4p}^\times \backslash \F_2 D_{4p}^\times /{\LL^\times}$ where $u_{(N,r)} = b_1(\wt u_s r - tN)b_2(r)^{-1} = \wt u_s + tN$. The result now follows from the formula for $\wt u_s$ given in \cref{lemma:ZG/N-unit-dihedral}.
\end{proof}

We now determine the maps $\rho_1$ and $\rho_2$ used in the description of the double coset quotient $\Z D_{4p}^\times \backslash \F_2 D_{4p}^\times /{\LL^\times}$ as in \cref{prop:swansetup}~(iii).
Since $D_{4p}$ satisfies the Eichler condition, \cite[Theorem A18]{Sw83} implies that $b_1(\Z D_{4p}^\times)$ is normal in $\F_2 D_{4p}^\times$. Thus we obtain a bijection 
\[ \Z D_{4p}^\times \backslash \F_2 D_{4p}^\times /{\LL^\times} \xrightarrow[]{\cong} \coker \left(\LL^\times \to {\F_2D_{4p}^\times}/{\Z D_{4p}^\times}\right). \]

We now give an explicit description of $\F_2 D_{4p}^\times/\Z D_{4p}^\times$, following~\cite{Sw83}.
Let $V = C_2^2 = \langle x, y \mid x^2=y^2=1,[x,y]=1 \rangle$ denote the Klein four group. Recall that there is a surjective group homomorphism $f : D_{4p} \twoheadrightarrow V$, $x \mapsto x$, $y \mapsto y$ and that
we define $\lambda_p = \zeta_p+\zeta_p^{-1}$ and $R_p = \Z[\lambda_p]$. 
Consider the subring $Z = \Z[x+x^{-1}] \le \Z D_{4p}/(\Phi_{4p}(x^2))$ and consider the ideal $\mathfrak{p} = (\lambda_p^2-4) \le R_p$, which is the unique prime ideal of $R_p$ over $p$. 

There are canonical ring homomorphisms
\[ \F_2 D_{4p} \twoheadrightarrow M_2(Z/2Z), \quad x \mapsto \left(\begin{smallmatrix} z & 1\\ 1 & 0 \end{smallmatrix}\right), \, y \mapsto \left(\begin{smallmatrix} 0 & 1\\ 1 & 0 \end{smallmatrix}\right) \] 
 and
\[ Z \xrightarrow[]{\cong} R_p C_2, \quad \alpha(z) \mapsto \left(\frac{\alpha(\lambda_p) + \alpha(-\lambda_p)}{2}\right) + \left(\frac{\alpha(\lambda_p) - \alpha(-\lambda_p)}{2}\right)t, \]
where we take $C_2 = \langle t \mid t^2=1 \rangle$.
Finally, let $\rho_1$ and $\rho_2$ be the compositions
\begin{equation}\label{eq:rhos}
\begin{aligned}
\rho_1 &: \F_2 D_{4p}^\times \xrightarrow[]{f_*} \F_2 V^\times \twoheadrightarrow {(\F_2 V)^\times}/V = \Z/2 \cdot (1+x+y) \cong \Z /2 ,
\\
\rho_2 &: \F_2 D_{4p}^\times \to \GL_2(Z/2Z) \xrightarrow[]{\det} (Z/2Z)^\times \xrightarrow[]{\cong} (R_p/2R_p)[C_2]^\times  \twoheadrightarrow \frac{(R_p/2R_p)[C_2]^\times}{1+\mathfrak{p}R_pC_2}.
\end{aligned} 
\end{equation}

\begin{lemma}\label{prop:rho1rho2}
The maps $\rho_1$ and $\rho_2$ induce an isomorphism of abelian groups
\[
(\overline \rho_1, \overline \rho_2) \colon \F_2 D_{4p}^\times/\Z D_{4p}^\times
\xrightarrow[]{\cong} \Z/2 \oplus \frac{(R_p/2R_p)[C_2]^\times}{1+\mathfrak{p}R_pC_2}.
\]
\end{lemma}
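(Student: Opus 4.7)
I plan to prove the lemma in two stages: first verify that $(\rho_1, \rho_2)$ vanishes on the image of $b_1(\Z D_{4p}^\times) \subseteq \F_2 D_{4p}^\times$, so that $(\overline\rho_1, \overline\rho_2)$ is well-defined on the quotient, and then show that the induced map is a bijection. The underlying structural observation is that $\rho_1$ and $\rho_2$ arise from the two surjections $\F_2 D_{4p} \twoheadrightarrow \F_2 V$ (from the abelianization $D_{4p} \twoheadrightarrow V$) and $\F_2 D_{4p} \twoheadrightarrow M_2(Z/2Z)$ (from a two-dimensional representation), which jointly detect the semisimple components of $\F_2 D_{4p}$ relevant to the double-coset computation preceding the lemma.

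For the vanishing step: any $u \in \Z D_{4p}^\times$ maps to $f_*(u) \in \Z V^\times = \{\pm v : v \in V\}$ by Higman's classical theorem applied to the elementary abelian group $V \cong C_2 \times C_2$. Reducing modulo $2$ gives an element of $V \subset (\F_2 V)^\times$, trivial in $(\F_2 V)^\times/V \cong \Z/2$, so $\rho_1(u) = 0$. For $\rho_2$, the image of $u$ in $\GL_2(Z)$ has determinant in $Z^\times \cong (R_p C_2)^\times$, a unit group generated (up to sign) by $R_p^\times$ and $t \in C_2$; both reduce trivially in $(R_p/2R_p)[C_2]^\times/(1+\mathfrak p R_p C_2)$, so $\rho_2(u) = 0$.

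For bijectivity, I would use the Wedderburn-style decomposition of $\F_2 D_{4p}$ modulo its Jacobson radical $J$, together with the Eichler condition for $D_{4p}$ already invoked just above the lemma. Principal units $1 + J$ lift to $\Z D_{4p}^\times$ by Hensel-type arguments, so the analysis reduces to the semisimple quotient, whose unit group decomposes according to the factors $\F_2 V$ and matrix algebras. Surjectivity then follows by constructing explicit unit lifts of generators of the target; injectivity follows by identifying the image of $b_1(\Z D_{4p}^\times)$ in each Wedderburn component and matching it against the kernel of $(\rho_1, \rho_2)$, with an order count to confirm equality. As a byproduct, the quotient $\F_2 D_{4p}^\times/\Z D_{4p}^\times$ is shown to be abelian.

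The main obstacle will be the precise analysis of the $M_2(Z/2Z)$ factor: identifying which elements of $\GL_2(Z/2Z)$ arise as images of units of $\F_2 D_{4p}$, showing their determinants fill out exactly $(R_p/2R_p)[C_2]^\times/(1 + \mathfrak p R_p C_2)$, and matching the kernel with $b_1(\Z D_{4p}^\times)$. This extends Swan's explicit computations in \cite[Section 11]{Sw83} (carried out there for small odd primes $p$) to general odd $p$, and requires careful handling of the ramification of both $2$ and $p$ in $R_p$, along with the explicit identification $Z \cong R_p C_2$ furnished earlier in the subsection.
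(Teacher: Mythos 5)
The paper's proof is a single citation: the isomorphism is exactly the content of \cite[Lemma 11.1]{Sw83} (established there for every odd prime $p$, via the general machinery of \cite[Lemma 10.5]{Sw83}), so nothing needs to be reproved. Your proposal takes the opposite tack of re-deriving the statement from scratch, and while the overall plan (well-definedness, then bijectivity via the semisimple decomposition of $\F_2 D_{4p}$ and the Eichler condition) mirrors what Swan does, the execution has a real gap and also misjudges the state of the art: you describe extending Swan's ``small odd primes'' computations to general $p$ as the main obstacle, but Swan's Lemma 11.1 already covers arbitrary odd $p$; only the complete classification in \cite[Theorem 11.14]{Sw83} is restricted to $p = 3, 5$.

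The concrete gap is in the vanishing of $\rho_2$ on $b_1(\Z D_{4p}^\times)$. You assert that the determinant of a global unit lands in $Z^\times \cong (R_p C_2)^\times$, and that this unit group is ``generated (up to sign) by $R_p^\times$ and $t \in C_2$.'' That description of $(R_p[C_2])^\times$ is false in general: under $a + bt \mapsto (a+b, a-b)$ one has $(R_p[C_2])^\times \cong \{(u,v) \in R_p^\times \times R_p^\times : u \equiv v \bmod 2\}$, which is strictly larger than $\pm\langle R_p^\times, t\rangle$ whenever $R_p$ has infinite unit rank (and it does for $p \geq 7$). Indeed the paper's own \cref{lemma:helpergeneral} and \cref{lemma:p7} work with the maximal order $\Gamma$ precisely because the structure of $R_p[C_2]^\times$ is delicate. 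Moreover, even granting that determinants land in $Z^\times$, it is not clear that $Z^\times$ dies under the composite $Z^\times \to (Z/2Z)^\times \cong (R_p/2R_p)[C_2]^\times \to (R_p/2R_p)[C_2]^\times/(1+\mathfrak{p}R_pC_2)$: a unit of $Z^\times$ need not reduce to $1$ modulo $\mathfrak{p}$, so you cannot simply place it in $1 + \mathfrak{p}R_pC_2$. The actual constraint comes from the pullback/idelic analysis in Swan's Lemma 10.5 and is more subtle than a direct ``units map trivially'' argument. You should either cite Swan's Lemma 11.1 as the paper does, or carry out that local-global analysis explicitly.
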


\begin{proof}
The statement follows from the proof of \cite[Lemma 11.1]{Sw83}, where the maps are constructed using~\cite[Lemma 10.5]{Sw83}.	
\end{proof}

The following establishes item~(iv) of~\cref{prop:swansetup}, thus completing the proof.

\begin{lemma}\label{lem:rho1u}
Let $r \in \Z$ with $(r,8p)=1$. Then
\[ \rho_1(u_{(N,r)}) = 
\begin{cases}
0, & r \equiv \pm 1 \mod 8 \\
1, & r \equiv \pm 3 \mod 8. 
\end{cases}
 \]
\end{lemma}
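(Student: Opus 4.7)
The plan is to push the given formula for $u_{(N,r)}$ forward along the projection $f_\ast \colon \F_2 D_{4p}^\times \to \F_2 V^\times$ and identify the result modulo the subgroup $V$. The key simplification is that the $tN$ term in $u_{(N,r)}$ vanishes in $\F_2 D_{4p}$: since the kernel of the surjection $Q_{8p} \twoheadrightarrow D_{4p}$ is the central subgroup $\{1,y^2\}$ of order two, we have $N = b_1 a_1(N_{Q_{8p}}) = 2 N_{D_{4p}} = 0$ in $\F_2 D_{4p}$. So it suffices to compute the image of $1+(\sum_{j=0}^{m-1}(x^r)^j)x^{r-1}(x+y)$ in $\F_2 V$.

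Since $V$ has exponent two and $r$ is odd, $f_\ast$ sends $x^r \mapsto x$ and $x^{r-1} \mapsto 1$, so the problem reduces to the combinatorial task of evaluating $1 + S_m(x+y) \in \F_2 V$, where $S_m = \sum_{j=0}^{m-1} x^{j \bmod 2}$. A direct case analysis on $m \bmod 4$ (using $(1+x)(x+y) = 1+x+y+xy$ in $\F_2 V$) yields
\[
f_\ast(u_{(N,r)}) = \begin{cases} 1, & m \equiv 0 \pmod{4}, \\ 1+x+y, & m \equiv 1 \pmod{4}, \\ x+y+xy, & m \equiv 2 \pmod{4}, \\ xy, & m \equiv 3 \pmod{4}. \end{cases}
\]
Since $V = \{1,x,y,xy\}$ and its non-trivial coset in $\F_2 V^\times$ is $V \cdot (1+x+y) = \{1+x+y,\, 1+x+xy,\, 1+y+xy,\, x+y+xy\}$, the values $1$ and $xy$ give $\rho_1 = 0$ while $1+x+y$ and $x+y+xy$ give $\rho_1 = 1$.

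Finally, I translate from $m$ back to $r$. Since $s$ is chosen with $rs \equiv 1 \pmod{8}$ and every element of $(\Z/8)^\times$ is self-inverse, we have $s \equiv r \pmod{8}$, hence $m = (s-1)/2 \equiv (r-1)/2 \pmod{4}$. Thus $r \equiv 1, 3, 5, 7 \pmod{8}$ corresponds to $m \equiv 0, 1, 2, 3 \pmod{4}$, so $\rho_1(u_{(N,r)}) = 0$ precisely when $r \equiv \pm 1 \pmod{8}$. The only real obstacle is bookkeeping; the conceptual content is the vanishing of $N$ in $\F_2 D_{4p}$ and the reduction to a four-element case analysis in $\F_2 V$.
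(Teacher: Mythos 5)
Your proof reaches the correct conclusion and the combinatorics are right, but there is a real subtlety in your justification for discarding the $tN$ term, and your overall route differs from the paper's.

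In the paper's own proof the $tN$ term does \emph{not} vanish. There, $N$ is taken to mean $N_{D_{4p}}$ (they explicitly write $N = 1+x+y+xy$ in $\F_2 V$, which is $f_*(N_{D_{4p}})$) and the normalisation is $rs = 1 + t\cdot 4p$ (they write $\lvert G\rvert = 4p$ in the proof, consistently with Lemma~\ref{lemma:rho2u} where $r=s=15$, $t=8$ comes from $225 = 1 + 8\cdot 28$, not from $1+4\cdot 56$). With this convention $t$ can be odd, so the term $tN_{D_{4p}}$ survives, and the paper carries out a joint analysis on $(m \bmod 4,\, t\bmod 2)$ rather than on $m\bmod 4$ alone. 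Your reasoning that $N = b_1 a_1(N_{Q_{8p}}) = 2N_{D_{4p}} = 0$ interprets $N$ as the pushforward of $N_{Q_{8p}}$, which is not how the paper reads its own formula.

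That said, your argument is internally consistent under the \emph{other} reading of Proposition~\ref{prop:swansetup}(ii): take $\lvert G\rvert = 8p$ (as literally stated there) and choose $s$ with $rs \equiv 1 \pmod{8p}$. Then applying Lemma~\ref{lemma:ZG/N-unit-to-isomorphism} to $D_{4p}$ forces the coefficient of $N_{D_{4p}}$ to be $2t$ (even), so the $N$-contribution vanishes for that reason, not because of the pushforward of $N_{Q_{8p}}$. Under this choice of $s$ one also has $s\equiv r^{-1}\equiv r\pmod 8$, so $m = (s-1)/2 \equiv (r-1)/2 \pmod 4$, and your four-case table in $\F_2 V$ (which I checked: $1$, $1{+}x{+}y$, $x{+}y{+}xy$, $xy$ for $m\equiv 0,1,2,3$) gives $\rho_1 = 0,1,1,0$, matching the statement. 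Since the value of $\rho_1$ is unchanged under right multiplication by $b_1(\Z D_{4p}^\times)$, the answer does not depend on which normalisation of $s$ is used, so both your proof and the paper's yield the lemma.

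So: correct result, a genuinely different (and arguably cleaner) route than the paper's by choosing $s$ so that the $N$-term disappears and working only with $m\bmod 4$, but the explicit reason you give for the vanishing is not quite the right one — it is the parity of the Milnor-patching coefficient of $N_{D_{4p}}$, not the image of $N_{Q_{8p}}$, that does the work.
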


\begin{proof}
Let $s, t \in \Z$, $s \geq 1$ with $rs = 1 + t \cdot \lvert G \rvert$ and define $m = (s - 1)/2$.
Since $N = (1 + x + y + xy)$ and $x^r = x$ in $\F_2[V]$, we have $u_{(N, r)} = 1 + (\sum_{i=0}^{m-1} x^i) \cdot (x + y) + t(1 + x + y + xy) \in \F_2[V]$.
This depends only on $m \bmod 4$ and $t \bmod 2$, and in fact
\[ \rho_1(u_{(N, r)}) = 0 \ \Leftrightarrow \ (m \mod 4, t \mod 1) \in \{ (0, 0), (1, 1), (2, 1), (3, 0)\}.\]
Since $\lvert G \rvert = 4p$ we have $rs \equiv 1 + 4t \mod 8$. If $r \equiv 1 \bmod 8$, then $m = (s - 1)/2 \equiv 2t \bmod 8$. Hence $m \equiv 0, 2 \bmod 4$ and $t$ is even and odd respectively. Thus we have $\rho_2(u_{(N, r)}) = 0$. The other cases follow analogously.
\end{proof}

\subsection{Proof of \cref{thmx:SF-vs-F}}
\label{ss:swan-proof}

We will now establish the following, which implies \cref{thmx:SF-vs-F}.

\begin{thm}\label{cor:N15}
The Swan module $(N, 15)$ of $\Z[Q_{56}]$ is a non-free stably free $\Z[Q_{56}]$-module.
\end{thm}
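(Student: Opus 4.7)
The plan is to verify stable freeness directly from \cref{prop:SF(Q8p)} and to refute freeness by evaluating the explicit invariant supplied by \cref{prop:swansetup}. Since $p = 7 \equiv -1 \mod 8$, \cref{prop:SF(Q8p)} gives
\[ \SF(Q_{56}) = \{r \in (\Z/56)^\times : r \equiv \pm 1 \mod 8\}, \]
and as $15 \equiv -1 \mod 8$ this immediately yields that $(N,15)$ is stably free. The real work lies in showing $(N,15)$ is not free.

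To do this I would first compute $u_{(N,15)} \in \F_2 D_{28}^\times$ using \cref{prop:swansetup}~(ii). Taking $s = 15$ (so that $15 \cdot 15 = 225 = 1 + 4 \cdot 56$), one has $t = 4$ and $m = 7$, and after reducing the exponents of $x$ modulo $14$ inside $D_{28}$, this yields
\[ u_{(N,15)} = 1 + (1+x+x^2+x^3+x^4+x^5+x^6)(x+y) \in \F_2 D_{28}. \]
Because $15 \equiv -1 \mod 8$, \cref{prop:swansetup}~(iv) gives $\rho_1(u_{(N,15)}) = 0$. Thus by \cref{prop:swansetup}~(iii) and the description of $(\overline\rho_1,\overline\rho_2)$ in \cref{prop:rho1rho2}, it suffices to show that the class of $(0,\rho_2(u_{(N,15)}))$ is nonzero in
\[ \left(\Z/2 \oplus \frac{(R_7/2R_7)[C_2]^\times}{1+\mathfrak{p}R_7C_2}\right) \Big/ (\rho_1,\rho_2)(\Lambda^\times). \]

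The next step is to compute $\rho_2(u_{(N,15)})$ explicitly. Pushing $u_{(N,15)}$ through the reduction $\F_2 D_{28} \twoheadrightarrow M_2(Z/2Z)$ specified before \cref{eq:rhos}, taking determinants, and transporting through the isomorphism $Z/2Z \cong (R_7/2R_7)[C_2]$, produces a concrete element of $(R_7/2R_7)[C_2]^\times$ to be reduced modulo $1+\mathfrak{p}R_7C_2$. Here $R_7 = \Z[\zeta_7+\zeta_7^{-1}]$ is the real cyclotomic ring of rank $3$ and $\mathfrak{p} = (\lambda_7^2-4)$; the quotient $(R_7/2R_7)[C_2]^\times/(1+\mathfrak{p}R_7C_2)$ is a finite abelian group that can be enumerated explicitly.

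The hard step is to determine the image of $(\rho_1,\rho_2)(\Lambda^\times)$ where $\Lambda = \Z Q_{56}/(x^{14}+1)$, and then to check that $(0,\rho_2(u_{(N,15)}))$ does not lie in it. I expect this to be the main obstacle: the unit group $\Lambda^\times$ is infinite and, as alluded to in the introduction, requires a further pullback decomposition (presumably \cref{eq:square2}) breaking $\Lambda$ into more tractable pieces (such as a maximal order in a quaternion algebra over $\Q(\zeta_7+\zeta_7^{-1})$ together with a quotient controlling the conductor, possibly linked through a residue ring at the prime above $2$). I would then compute a finite set of generators for $\Lambda^\times$ modulo those elements lying in $\ker(\rho_1)\cap\ker(\rho_2)$, push them through $(\rho_1,\rho_2)$ to get a finite list of relations, and verify by a finite check in the finite group $\Z/2 \oplus (R_7/2R_7)[C_2]^\times/(1+\mathfrak{p}R_7C_2)$ that the class of $(0,\rho_2(u_{(N,15)}))$ does not arise. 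Once this nonvanishing is established, \cref{prop:swansetup}~(iii) implies that $(N,15)$ is not free, completing the proof.
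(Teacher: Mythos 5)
Your plan is correct and follows essentially the same route as the paper's proof: stable freeness is read off from \cref{prop:SF(Q8p)}, then $u_{(N,15)}$ is computed, $\rho_1(u_{(N,15)})=0$ is noted, and non-freeness is reduced to showing the class of $\rho_2(u_{(N,15)})$ survives modulo $\rho_2(\Lambda^\times)$, which is determined via the pullback square \cref{eq:square2}. One small correction to your speculative description of that pullback: \cref{eq:square2} glues $\Z[i,j]$ and $\Z[\zeta_{4p},j]$ over $\F_p[i,j]$ (residue at $p$, not at the prime above $2$, and the quaternion order is over $\Q(\zeta_{4p})$ rather than $\Q(\zeta_7+\zeta_7^{-1})$), and the paper's key finding (\cref{lemma:rho2units}) is that $\rho_2(\Lambda^\times)=\{0\}$, so the final check collapses to verifying $\rho_2(u_{(N,15)}) \neq 0$, which is \cref{lemma:rho2u}(i).
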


The aim is to apply the criterion from~\cref{prop:swansetup}~(iii), which requires evaluating $u_{(N, r)}$ under the maps $\rho_1$ and $\rho_2$ from \cref{eq:rhos} and determining $\rho_i(\Lambda^\times)$ for $i = 1,2$. This will be done through a series of lemmas.
Some apply to $Q_{8p}$ for $p$ arbitrary (\cref{lemma:helpergeneral,lemma:helpergeneral2}) but, whilst we would like to obtain a general statement for $\FF(Q_{8p})$, we restrict certain calculations to the case $p = 7$ (\cref{lemma:p7,lemma:rho2u,lemma:p7_2,lemma:rho2units}) due to number theoretic obstacles.

We begin by determining
${(R_p/2R_p)[C_2]^\times}/(1+\mathfrak{p}R_pC_2)$.
Let $F = R_p/2R_p$ and let $R_pC_2 \le \Gamma \le \Q(\lambda_p)[C_2]$ be the unique maximal $R_p$-order.

\begin{lemma}\label{lemma:helpergeneral}
Assume that $\langle 2 \rangle = (\Z/p\Z)^\times/\{\pm 1\}$.
The following maps are isomorphisms:
\begin{clist}{(i)}
\item
$F^\times \times F \to F[C_2]^\times, \, (a, b) \mapsto a + ab(1 + t)$
 with inverse $a + bt \mapsto (a + b, b/(a + b))$.
\item
$(\Gamma/2\Gamma)^\times \to F^\times \times F^\times, \, \overline{a + bt} \mapsto (\overline{a + b}, \overline{a - b})$.
    \item
$(\Gamma/2\Gamma)^\times/(R_p[C_2]/2\Gamma)^\times \to F^\times, \, \overline{a + bt} \to \overline{a + bt}\cdot \overline{a - bt}^{-1}$.
\end{clist}
\end{lemma}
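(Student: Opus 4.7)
The plan is to unwind the definitions, reducing everything to elementary computations over a field. First I would verify that the hypothesis $\langle 2 \rangle = (\Z/p\Z)^\times/\{\pm 1\}$ is exactly the condition that $2$ is inert in $R_p = \Z[\lambda_p]$: a prime of $R_p$ above $2$ has residue degree equal to the order of $2$ in $(\Z/p\Z)^\times/\{\pm 1\}$, so inertness (residue degree $[R_p:\Z] = (p-1)/2$) corresponds precisely to $2$ generating this quotient. Consequently $F = R_p/2R_p$ is a finite field of order $2^{(p-1)/2}$, which is used throughout.

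For (i), since $\Char F = 2$ we have $F[C_2] = F[t]/(t^2-1) = F[t]/(t-1)^2$, a local ring whose square-zero maximal ideal is $(t-1)$, with residue map the augmentation $\varepsilon\colon a + bt \mapsto a+b$. Hence $a + bt \in F[C_2]^\times$ iff $a + b \in F^\times$. The two stated maps are then mutually inverse by direct computation in characteristic $2$: for instance, $(a,b) \mapsto a + ab(1+t) = a(1+b) + abt$ has augmentation $a(1+b) + ab = a + 2ab = a \in F^\times$, and applying the inverse yields $(a, ab/a) = (a,b)$; the other composite is similar.

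For (ii), use the semisimple decomposition $\Q(\lambda_p)[C_2] \cong \Q(\lambda_p) \times \Q(\lambda_p)$, $a + bt \mapsto (a+b, a-b)$, afforded by the orthogonal idempotents $e_\pm = (1 \pm t)/2$. Its maximal $R_p$-order is $R_p \times R_p$, so $\Gamma \cong R_p \times R_p$ under this iso; reducing mod $2$ (which is exact on the free $R_p$-module $\Gamma$) yields $\Gamma/2\Gamma \cong F \times F$ via the stated formula, and (ii) follows on passing to units.

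For (iii), observe that the composite $R_p[C_2] \hookrightarrow \Gamma \twoheadrightarrow \Gamma/2\Gamma \cong F \times F$ sends $a + bt$ (with $a,b \in R_p$) to $(\overline{a+b}, \overline{a-b}) = (\overline{a+b}, \overline{a+b})$ since $\overline{-b} = \overline{b}$ in $F$; hence the image of $R_p[C_2]$ in $\Gamma/2\Gamma$ is exactly the diagonal $\Delta(F)$, and the image of its units is $\Delta(F^\times)$. The quotient $(F^\times \times F^\times)/\Delta(F^\times)$ is isomorphic to $F^\times$ via $(u,v) \mapsto uv^{-1}$. To recognise this quotient map as $\overline{a + bt} \mapsto \overline{a+bt} \cdot \overline{a-bt}^{-1}$, note that the involution $t \mapsto -t$ on $\Gamma$ swaps the two factors of $R_p \times R_p$, so $\overline{a - bt}$ corresponds to $(\overline{a-b}, \overline{a+b})$; the product $\overline{a+bt} \cdot \overline{a-bt}^{-1}$ therefore has coordinates $\bigl(\overline{(a+b)/(a-b)},\, \overline{(a-b)/(a+b)}\bigr)$, which lies on the anti-diagonal $\{(x, x^{-1})\} \subseteq F^\times \times F^\times$ and is identified with $\overline{(a+b)/(a-b)} \in F^\times$ under the projection to the first coordinate. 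The main care required is only in this last identification and in correctly interpreting $\overline{a - bt}$ inside $\Gamma/2\Gamma$ (where the sign is meaningful because elements of $\Gamma$ need not lie in $R_p[C_2]$) rather than inside $R_p[C_2]/2R_p[C_2]$; apart from this bookkeeping there is no real obstacle, since the entire lemma is a direct consequence of the semisimple decomposition once $F$ is known to be a field.
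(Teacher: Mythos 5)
Your proof is correct and follows essentially the same route as the paper's: inertness of $2$ makes $F$ a field, (i) comes from the local ring structure of $F[t]/(1+t)^2$, (ii) from the idempotent decomposition $\Gamma \cong R_p \times R_p$, and (iii) from identifying the image of $R_p[C_2]$ in $\Gamma/2\Gamma$ with the diagonal. You simply spell out a few computational checks (the mutual inverses in (i), the coordinates of $\overline{a \pm bt}$ in (iii)) that the paper leaves implicit.
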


\begin{proof}
The assumption implies that $2$ is inert in $R_p$ and that $F$ is a finite field of order $2^{(p-1)/2}$.

(i) This follows from $R_2[C_2] \cong R_2[t]/(1 + t^2)$ and the fact that there is a split exact sequence
\[ 1 \to (1 + t)/(1 + t)^2 \to (F[t]/(1 + t)^2)^\times \to (F[t]/(1 + t))^\times \to 1. \]
  
(ii) This follows using the isomorphism $\Gamma \to R_p \times R_p$, $a + bt \mapsto (a + b, a - b)$.
  
(iii) Note that $2\Gamma$ is generated by $1 + t, 1 - t$ and hence $F \to R_p[C_2]/2 \Gamma$, $\overline a \mapsto \overline a$ is an isomorphism. Thus the image of $(R_p[C_2]/2\Gamma)^\times$ under the isomorphism in (ii) is the diagonal of $F^\times \times F^\times$.
\end{proof}

\begin{lemma}\label{lemma:p7}
For $p = 7$ the following holds:
\begin{clist}{(i)}
\item
$\ker(R_p[C_2]^\times \to (R_p/\mathfrak p R_p)[C_2]^\times) = (g_1 g_3^3 g_5, g_2 g_3^{18}g_4 g_6, g_3^{21}, g_3^9 g_6^3)$, where $g_1 = t$, $g_2 = -t$,
\[ g_{3} = \frac{\lambda_p + 1}2 + \frac{\lambda_p - 1}2 t, \,\, g_{4} = \frac{\lambda_p + 1}2 - \frac{\lambda_p - 1}2 t, \,\, g_{5} = \frac{\lambda_p^2}2 + \frac{\lambda_p^2 - 2}2 t \,\,\,\, \text{\normalfont and} \,\,\,\, g_{6} = \frac{\lambda_p^2}2 - \frac{\lambda_p^2 - 2}2 t.\]
\item
The map
\[ {(R_p/2R_p)[C_2]^\times}/({1+\mathfrak{p}R_pC_2}) \to \Z/2\Z, \, \overline{a + b t} \mapsto c \]
where $b/(a + b) = c_0 + c_1 \overline \lambda + c_2 \overline \lambda^2$ and $c = c_0 + c_1 + c_2$, is an isomorphism.
\end{clist}
\end{lemma}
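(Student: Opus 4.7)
The plan is to transport everything through the isomorphism $F[C_2]^\times \cong F^\times \times F$ of Lemma~\ref{lemma:helpergeneral}(i), under which $\overline{a+bt} \mapsto (a+b,\,b/(a+b))$.

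First, I would identify the assignment in (ii) with the composition $F[C_2]^\times \cong F^\times \times F \xrightarrow{\pi_2} F \xrightarrow{\tr} \F_2$, where $\pi_2$ is projection to the second factor and $\tr$ is the absolute trace of $F/\F_2$. For $p = 7$, the minimal polynomial $X^3+X^2-2X-1$ of $\lambda$ reduces modulo~$2$ to the irreducible polynomial $X^3+X^2+1 \in \F_2[X]$, so $F = \F_2[\overline\lambda]/(\overline\lambda^3+\overline\lambda^2+1) \cong \F_8$. A direct Frobenius orbit computation gives $\tr(1) = \tr(\overline\lambda) = \tr(\overline\lambda^2) = 1$; hence by $\F_2$-linearity $\tr(c_0+c_1\overline\lambda+c_2\overline\lambda^2) = c_0+c_1+c_2$, so the assignment equals $\tr \circ \pi_2$. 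This is a surjective group homomorphism $F[C_2]^\times \to \F_2$: for instance, $\overline{1+(\overline\lambda+1)t}$ has second coordinate $\overline\lambda^4 = \overline\lambda^2+\overline\lambda+1$, of trace~$1$.

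Next, I would show the map factors through the quotient by the image $H$ in $F[C_2]^\times$ of $(1+\mathfrak{p}R_7 C_2) \cap R_7[C_2]^\times$. By part~(i), $H$ is generated by the images of $u_1 := g_1g_3^3 g_5$, $u_2 := g_2g_3^{18}g_4g_6$, $u_3 := g_3^{21}$ and $u_4 := g_3^9 g_6^3$. For each $u_i$, I would (a) multiply the $g_j$ in $\Gamma = R_7 \times R_7$ via the identifications $g_1 \leftrightarrow (1,-1)$, $g_2 \leftrightarrow (-1,1)$, $g_3 \leftrightarrow (\lambda,1)$, $g_4 \leftrightarrow (1,\lambda)$, $g_5 \leftrightarrow (\lambda^2-1,1)$ and $g_6 \leftrightarrow (1,\lambda^2-1)$ induced by $t \mapsto \pm 1$, (b) recover the representation $u_i = a+bt \in R_7[C_2]$ via $a = (x+y)/2$, $b = (x-y)/2$ (integrality supplied by~(i)), (c) reduce modulo $2R_7$, and (d) apply the map. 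The trace should vanish in each case, showing $H \subseteq F^\times \times \ker(\tr)$, a subgroup of order $7 \cdot 4 = 28$ in the group $F[C_2]^\times$ of order~$56$.

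Injectivity of the induced map amounts to $|H| \ge 28$, i.e.\ that the four images $\overline{u_i} \in F^\times \times F$ generate $F^\times \times \ker(\tr)$. This reduces to: (a) the first components $\overline{a_i+b_i}$ generate $F^\times = \langle \overline\lambda\rangle \cong \Z/7\Z$, and (b) the second components $\overline{b_i/(a_i+b_i)}$ span the $2$-dimensional $\F_2$-subspace $\ker(\tr) \subset F$. Both can be verified directly from the explicit formulas of the previous step. The main obstacle will be the computational bookkeeping: although each of $g_3, g_4, g_5, g_6$ lies in $\Gamma$ but typically not in $R_7[C_2]$ (their coefficients $(\lambda\pm 1)/2$ and $\lambda^2/2$ carry denominator~$2$), the four listed products $u_i$ do lie in $R_7[C_2]^\times$ by~(i), and one must check the congruence $x \equiv y \pmod{2R_7}$ at each stage before recovering $(a,b)$ and reducing modulo~$2$ using the relations $\overline\lambda^3 = \overline\lambda^2+1$ and $\overline\lambda^7 = 1$ in $F$.
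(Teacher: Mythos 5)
Your treatment of part~(ii) is sound and essentially matches the paper's: both arguments push everything through the isomorphism $F[C_2]^\times \cong F^\times \times F$, identify the asserted map as a (surjective) group homomorphism, compute the images in $F^\times \times F$ of the four generators of the kernel subgroup from part~(i), and verify that they generate precisely $F^\times \times \ker(\tr)$, a subgroup of index~$2$. Your observation that the second-coordinate assignment is the absolute trace of $\F_8/\F_2$ is a nice conceptual gloss on the sum $c_0+c_1+c_2$, and your numerical checks ($\tr(1)=\tr(\overline\lambda)=\tr(\overline\lambda^2)=1$, the example $\overline{1+(\overline\lambda+1)t}$, the identifications $g_i \leftrightarrow$ pairs in $R_7\times R_7$, the count $|F^\times\times\ker\tr|=28$) are all correct.

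However, there is a genuine gap: part~(i) is never proved. Your proposal opens with a plan for~(ii) and then, at the key step, writes ``By part~(i), $H$ is generated by\ldots,'' taking the explicit generating set for $\ker(R_7[C_2]^\times \to (R_7/\mathfrak p R_7)[C_2]^\times)$ as an input rather than establishing it. But part~(i) is the heavier half of the lemma: it requires identifying $\Gamma^\times$ (via $h_7^+=1$ and Washington's description of the cyclotomic unit group to get $R_7^\times = \langle -1,\lambda_7,\lambda_7^2-1\rangle$), invoking the H\"ochsmann--K\"ulshammer/Jespers--Parmenter description $R_7[C_2]^\times = \ker\bigl(\Gamma^\times \to (\Gamma/2\Gamma)^\times/(R_7[C_2]/2\Gamma)^\times\bigr)$ together with \cref{lemma:helpergeneral}~(iii) to cut $\Gamma^\times$ down to $R_7[C_2]^\times$, and then a second kernel computation modulo $\mathfrak p$. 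None of this appears in your proposal. Since your argument for~(ii) depends on that generating set, the proposal as written is incomplete even if every computation in it is correct.
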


\begin{proof}
(i) Let $\Gamma$ be the maximal order of $R_p[C_2]$. As $h_p^+ = 1$ (see for example~\cite{Linden1992}), we have $R_p^\times = \langle -1, \lambda_p, \lambda_p^2 - 1 \rangle$ by \cite[Lemma 8.1, Theorem 8.2]{Washington1997}.
Since $R_p \times R_p \to \Gamma$, $(a, b) \mapsto (a + b)/2 + (a - b)/2 \cdot t$ is an isomorphism,
the group $\Gamma^\times$ is generated by $g_i$, $1 \leq i \leq 6$.
By~\cite[Satz 10]{HK1994} (see also~\cite[Proposition 6.8]{JP2020}) we have
    \[ R_p[C_2]^\times = \ker(\Gamma^\times \to (\Gamma/2\Gamma)^\times/(R_p[C_2]/2\Gamma)^\times). \]
    Lemma~\ref{lemma:helpergeneral}~(iii) provides an explicit isomorphism
    $(\Gamma/2\Gamma)^\times/(R_p[C_2]/2\Gamma)^\times \cong F^\times$
    and the images of the generators of $\Gamma^\times$ under this map are
    \[ \overline 1, \overline 1, \overline \lambda_p, \overline \lambda_p^2 + 1 \, (\,= \overline{\lambda_p}^3), \overline{\lambda_p}^6, \overline{\lambda_p}^4. \]
    Since $F^\times = \langle \overline{\lambda_p} \rangle$, a routine kernel computation implies that
    $R_p[C_2]^\times = \langle g_1, g_2, g_3^7, g_3^4 g_5, g_3 g_4, g_3^3 g_6 \rangle$.
    
As $R_p/\mathfrak p R_p \cong \F_p$ has characteristic coprime to $2$, we know that
\[ (R_p/\mathfrak p R_p)[C_2]^\times \to (R_p/\mathfrak p R_p)^\times \times (R_p/\mathfrak p R_p)^\times, \, a + bt \mapsto (a + b, a - b) \]
is an isomorphism. Now $\lambda_p \equiv 2$ modulo $\mathfrak p$ and $3$ is a primitive root modulo $7$. The images of the generators of $R_p[C_2]^\times$ computed before are equal to
\[ (3^3, 1), (1, 3^3), (3^2, 1), (3, 1), (1, 3^2), (1, 3) \]
and hence the claim follows again by a kernel computation.

(ii) We have $F = R_p/2R_p \cong \F_{8}$. By \cref{lemma:helpergeneral} we know that $F[C_2]^\times \to F^\times \times F, \ a + bt \to (a + b, b/(a + b))$
is an isomorphism. Furthermore, $F^\times = \langle \overline{\lambda_p} \rangle$ and $F = \F_2[\overline \lambda_p]$.
The images of the generators of $\ker(R_p[C_2]^\times \to (R_p/\mathfrak p R_p)[C_2]^\times)$ determined in (i) under this map are
\[ (1, 1 + \overline \lambda_p^2), (\overline \lambda_p^4, 1 + \overline \lambda_p), (1, \overline \lambda_p + \overline \lambda_p^2), (\overline \lambda_p^5, 0), \]
which generate the subgroup of $F^\times \times F$ containing all elements of the form $(d, c_0 + c_1 \overline\lambda_p + c_2 \overline \lambda_p^2)$ with $d \in F^\times$ and $c_0 + c_1 + c_2 = 0$.
\end{proof}

Using this description of the codomain of $\rho_2$, we can now evaluate $u_{(N, r)}$ under $\rho_2$.
In what follows, (i) will be used in the proof of \cref{cor:N15}. However, (ii) will not be used until the proof of \cref{thmx:CW} (see \cref{cor:N9}) and is established here only for convenience.

\begin{lemma}\label{lemma:rho2u}
The following holds over $Q_{56}$:
\begin{clist}{(i)}
    \item
        $\rho_2(u_{(N, 15)}) \neq 0$.
    \item
        $\rho_2(u_{(N, 9)}) = 0$.
\end{clist}
\end{lemma}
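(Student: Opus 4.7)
The strategy is to compute $\rho_2(u_{(N,r)})$ explicitly for $r \in \{15, 9\}$ by tracking $u_{(N,r)}$ through the three maps defining $\rho_2$ in \cref{eq:rhos}, and then invoking the isomorphism of \cref{lemma:p7}(ii) to land in $\Z/2$.

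Over $\F_2$, the matrix images $X = \bigl(\begin{smallmatrix} z & 1 \\ 1 & 0 \end{smallmatrix}\bigr)$ and $Y = \bigl(\begin{smallmatrix} 0 & 1 \\ 1 & 0 \end{smallmatrix}\bigr)$ of $x$ and $y$ satisfy $X + Y = \bigl(\begin{smallmatrix} z & 0 \\ 0 & 0 \end{smallmatrix}\bigr)$, which has rank one. In both cases we have $t = 4$, so the $tN$ correction in \cref{prop:swansetup}(ii) vanishes in $\F_2 D_{28}$, and the matrix image of $u_{(N,r)}$ equals $I + A_r X^{r-1}(X+Y)$ with $A_r = \sum_{j=0}^{m-1} X^{jr}$. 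Since this correction has rank at most one, the identity $\det(I + M) = 1 + \tr(M)$ gives
\[
\det(u_{(N,r)}) \;=\; 1 \;+\; z \sum_{k=1}^{m} V_{kr} \;\in\; (Z/2Z)^\times,
\]
where $V_n \in \F_2[z]$ is the Chebyshev-like sequence defined by $V_{-1} = 1$, $V_0 = 0$, $V_{n+1} = z V_n + V_{n-1}$, which is characterised by $X^n = \bigl(\begin{smallmatrix} V_{n+1} & V_n \\ V_n & V_{n-1} \end{smallmatrix}\bigr)$.

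Since $x^{14} = 1$ in $D_{28}$ forces $X^{14} = I$ in $M_2(Z/2Z)$, the sequence $\{V_n\}$ is $14$-periodic in $Z/2Z$, so only the residues $kr \bmod 14$ enter. For $r = 15$, the congruence $15 \equiv 1 \pmod{14}$ reduces the sum to $\sum_{k=1}^7 V_k$. For $r = 9$, the residues $9k \bmod 14$ for $k = 1, \dots, 12$ sweep out $\{1, \dots, 13\} \setminus \{5\}$; using the telescoping identity $z \sum_{k=1}^n V_k = 1 + V_n + V_{n+1}$ (which follows from $zV_k = V_{k+1} + V_{k-1}$) together with $V_{13} = 1$, $V_{14} = 0$, one obtains $\sum_{k=1}^{13} V_k = 0$ in $Z/2Z$, so the sum reduces to $V_5$.

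The final step transports $\det(u_{(N,r)})$ to $(R_p/2R_p)[C_2]$ via the isomorphism $z \mapsto \overline \lambda_p t$, writes it as $a + bt$ with $a, b \in R_p/2R_p \cong \F_2[\overline \lambda_p]/(\overline \lambda_p^3 + \overline \lambda_p^2 + 1) \cong \mathbb{F}_8$, and applies the $\F_2$-linear map of \cref{lemma:p7}(ii) to $b/(a+b)$; a short calculation using $\overline \lambda_p^7 = 1$ yields $1 \in \Z/2$ for $r = 15$ and $0 \in \Z/2$ for $r = 9$. No step is conceptually deep --- the main challenge is careful bookkeeping across the three isomorphisms and correctly matching even and odd powers of $z$ with the $1$- and $t$-parts of $(R_p/2R_p)[C_2]$.
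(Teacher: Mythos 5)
Your proof is correct and reaches the same intermediate values as the paper, namely $\det = z + z^5$ for $r = 15$ and $\det = 1 + z + z^3 + z^5$ for $r = 9$, before applying the isomorphism $z \mapsto \overline\lambda_p t$ and the $\Z/2$-valued map of \cref{lemma:p7}~(ii). The overall strategy --- push $u_{(N,r)}$ through the matrix representation, take the determinant, reduce to $\F_8[C_2]$ --- is the same as the paper's, but you organise the computation differently: rather than writing out the two specific matrix images explicitly, you observe that the correction term $A_r X^{r-1}(X+Y)$ has rank at most one (since $X+Y$ does), so $\det(I+M) = 1+\tr(M)$, and you track the trace via the Chebyshev-like sequence $V_n$ with the telescoping identity $z\sum_{k=1}^n V_k = 1 + V_n + V_{n+1}$ and the $14$-periodicity forced by $X^{14}=I$. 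This is a genuine streamlining: it makes the $r=9$ case (twelve summands) nearly as clean as $r=15$, and it makes the argument uniform in $r$. One small remark: the paper states the relation in $Z/2Z$ as $z^6+z^5+1=0$, which is a typo for $z^6+z^4+1=0$ (the mod-$2$ reduction of $z^6-5z^4+6z^2-1$, equivalently $(z^3+z^2+1)^2$); your presentation $R_p/2R_p \cong \F_2[\overline\lambda_p]/(\overline\lambda_p^3+\overline\lambda_p^2+1)$ with $z\mapsto\overline\lambda_p t$ correctly sidesteps this.
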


\begin{proof}
    Let $p = 7$ and $z = x + x^{-1} \in \Z[D_{4p}]/\Phi_p(x^2)$. Then $\Z[z]$ has rank $6$ with $z$ satisfying $z^6 - 5z^4 + 6z^2 - 1 = 0$ and $z^6 + z^5 + 1 = 0$ in $Z/2Z = \F_2[z]$.
    
    (i)
    Note that $15 \cdot 15 = 1 + 8 \cdot 28$ and hence we choose $r = 15$, $t = 8$ and $m = 7$.
    In particular, $x^r = x$ and $t \cdot N = 0$ in $\F_2[D_{4p}]$.
    Thus
    \[ u_{(N, 15)} = 1 + (1 + x + x^2 + \dotsb + x^6) \cdot (x + y) \in \F_2D_{4p}^\times.\] 
    A quick calculation shows that
    \[ u_{(N, 15)} \mapsto \begin{pmatrix} z + z^5 & 0 \\ 1 + z + z^5 & 1 \end{pmatrix} \in \GL_2(\F_2[z]), \]
    which has determinant $z + z^5$, mapping to $(\overline \lambda_p + \overline \lambda_p^5) t = t \in (R_p/2R_p)[C_2]^\times$.
    The claim follows from Lemma~\ref{lemma:p7}~(iii).
    
    (ii)  We can choose $s = 25$ and $t = 8$, so that $u_{(N, 9)} = N + x^5 + x^4y + x^{13}y$, which gets mapped to
    \[ \begin{pmatrix} 1 + z + z^3 + z^5 & 0 \\ z + z^4 & 1 \end{pmatrix} \in \GL_2(\F_2[z]). \]
    The determinant is $1 + z + z^3 + z^5$, and hence $\rho_2(u_{(N, 9)}) = 1 + \overline \lambda_p^2 t$. The claim follows again from Lemma~\ref{lemma:p7} since $\overline \lambda_p^2/(1 + \overline \lambda_p^2) = \overline \lambda_p + \overline \lambda_p^2$.
\end{proof}

We now determine generators of $\Lambda^\times$ using a pullback square for $\Lambda$ and show that these vanish under $\rho_2$.
The general strategy works for any odd prime $p$,
whereas for the computation of specific unit groups of rings of integers in number fields we restrict to $p = 7$.

Recall that $\LL = \Z Q_{8p}/(x^{2p}+1)$, where $p$ is an odd prime. In order to compute $\LL^\times$, we will use the following pullback square (see, for example, \cite[p115]{Sw83}):
\begin{equation}
\begin{tikzcd}
	\LL \ar[r,"c_2"] \ar[d,"c_1"] & \Z[\zeta_{4p},j] \ar[d,"d_2"] \\
	\Z[i,j] \ar[r,"d_1"] & \F_p[i,j] 
\end{tikzcd} \quad 
\begin{tikzcd}
	x,y \ar[r,mapsto] \ar[d,mapsto] & \zeta_{4p},j \ar[d,mapsto] \\
	i,j \ar[r,mapsto] & i,j 
\end{tikzcd}
\label{eq:square2}
\end{equation}
where $\Z[\zeta_{4p},j]$, $\Z[i,j] \le \H_{\R}$ and $\F_p[i,j] := \F_p \otimes_{\Z} \Z[i,j]$. This is induced by the factorisation $x^{2p}+1 = \Phi_4  \Phi_{4p}$ where $\Phi_4 = x^2+1$ and $\Phi_{4p} = x^{2(p-1)}-x^{2(p-2)}+ \cdots -x^2+1$, and the fact that $(\Phi_4,\Phi_{4p}) = (\Phi_4,p)$. In particular, there are ring isomorphisms $\Z[i,j] \cong \l/(\Phi_4)$, $\Z[\zeta_{4p},j] \cong \l/(\Phi_{4p})$ and $\F_p[i,j] \cong \l/(\Phi_4,\Phi_{4p})$. 

Let $N : \Z[i,j]^\times \to \Z^\times$ and $N_{\F_p} : \F_p[i,j]^\times \to \F_p^\times$ denote the quaternionic norms induced by $a+bi+cj+dij \mapsto a^2+b^2+c^2+d^2$. 
For each norm, the preimage $\alpha = a+bi+cj+dij$ of a unit is a unit with inverse given by its conjugate $\bar{\alpha} = a-bi-cj-dij$. Thus we have:
\[ \Z[i,j]^\times = \{\pm 1, \pm i, \pm j, \pm ij\}, \quad \F_p[i,j]^\times =  \{a+bi+cj+dij : a^2+b^2+c^2+d^2 \in \F_p^\times \}. \]
By the pullback square above, there is an isomorphism 
\[\LL^\times \cong \{ (a,b) \in \Z[i,j]^\times \oplus \Z[\zeta_{4p},j]^\times : d_1(a) = d_2(b) \in \F_p[i,j]^\times\} \]
induced by $(c_1,c_2) : \LL \to \Z[i,j] \times \Z[\zeta_{4p},j]$.

\begin{lemma}\label{lemma:helpergeneral2}
The following holds:
\begin{clist}{(i)}
    \item
We have $\l^\times = \{\pm 1, \pm x, \pm y, \pm xy\} \cdot \{ \beta \in \l^\times : c_1(\beta)=1\}$.
\item
    If $S \le \Lambda^\times$ satisfies $c_1(S) = \{1 \}$ and $\langle c_2(S) \rangle = \ker(\Z[\zeta_{4p}]^\times \to \F_p[i])$, then $S$ is a generating set of $\{ \beta \in \l^\times : c_1(\beta)=1\}$.
\end{clist}
\end{lemma}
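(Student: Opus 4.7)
My plan is the following. For part~(i), the argument is elementary: since $\{\pm 1, \pm x, \pm y, \pm xy\}$ consists of images of elements of $Q_{8p}$, each such element lies in $\LL^\times$, and $c_1$ sends this set bijectively onto $\Z[i,j]^\times = \{\pm 1, \pm i, \pm j, \pm ij\}$. Given $\alpha \in \LL^\times$, I would pick the unique $u \in \{\pm 1, \pm x, \pm y, \pm xy\}$ with $c_1(u) = c_1(\alpha)$; then $\beta := u^{-1}\alpha$ satisfies $c_1(\beta) = 1$, and the factorisation $\alpha = u\beta$ is exactly the desired form.

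For part~(ii), the plan is to use the Milnor pullback description of $\LL^\times$ coming from~\eqref{eq:square2}, namely
\[ \LL^\times \cong \{(a, b) \in \Z[i, j]^\times \times \Z[\zeta_{4p}, j]^\times : d_1(a) = d_2(b)\}, \]
via $(c_1, c_2)$. Under this identification, $H := \{\beta \in \LL^\times : c_1(\beta) = 1\}$ corresponds to the subgroup $\{(1, b) : d_2(b) = 1\}$, so $c_2$ embeds $H$ injectively into $\ker(d_2 \colon \Z[\zeta_{4p}, j]^\times \to \F_p[i, j]^\times)$. Granted the structural claim that $c_2(H)$ actually lies in the commutative subring $\Z[\zeta_{4p}]^\times \subset \Z[\zeta_{4p}, j]^\times$ (equivalently, every unit of $\Z[\zeta_{4p}, j]$ reducing to $1$ modulo the prime above $p$ has no $j$-component), the hypothesis forces $c_2(H) = \ker(\Z[\zeta_{4p}]^\times \to \F_p[i]^\times) = \langle c_2(S)\rangle$. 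For any $\beta \in H$ I would then write $c_2(\beta) = \prod_k c_2(s_k)^{\epsilon_k}$ with $s_k \in S$, set $\gamma := \prod_k s_k^{\epsilon_k} \in \langle S\rangle$, and observe that $c_1(\gamma) = 1 = c_1(\beta)$ (since $c_1(S) = \{1\}$) while $c_2(\gamma) = c_2(\beta)$; the injectivity of $(c_1, c_2)$ forces $\gamma = \beta$, giving $\beta \in \langle S\rangle$.

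The main obstacle is therefore the structural claim about the kernel. Since $\Q(\zeta_{4p})[j]$ (with $j^2 = -1$ and $j$ swapping $\zeta_{4p}$ and $\zeta_{4p}^{-1}$) is a totally definite quaternion algebra over the totally real field $\Q(\zeta_{4p})^+$, the unit group $\Z[\zeta_{4p}, j]^\times$ is finite, which makes the claim accessible. I would approach it by combining an analysis of the norm form $N(a + cj) = a \bar{a} + c \bar{c}$ under the congruences $a \equiv 1$, $c \equiv 0$ modulo the unique prime above $p$, together with known facts about units of $\Z[\zeta_{4p}]$; an alternative would be to invoke an explicit description of this finite unit group from the literature on definite quaternion orders, and check directly that its $\mathfrak{P}$-principal subgroup is commutative.
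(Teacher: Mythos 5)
Part~(i) is correct and essentially identical to the paper's argument: both factor a given $\alpha\in\Lambda^\times$ by multiplying by the unique element of $\{\pm 1,\pm x,\pm y,\pm xy\}$ having the same $c_1$-image.

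For part~(ii), your overall skeleton matches the paper: use the Milnor pullback to embed $H=\{\beta\in\Lambda^\times:c_1(\beta)=1\}$ via $c_2$ into $\ker(d_2)$, reduce to the ``structural claim'' that this kernel lies inside $\Z[\zeta_{4p}]^\times$, and then compare generators. You correctly isolate the structural claim as the only nontrivial point. However, the route you sketch for proving it is unsound. You assert that $\Z[\zeta_{4p},j]^\times$ is finite because the ambient quaternion algebra is totally definite over the totally real field $\Q(\lambda_{4p})$. This is false: the order $\Z[\zeta_{4p},j]$ contains $\Z[\zeta_{4p}]$, whose unit group has free rank $\phi(4p)/2-1=p-2>0$ for every odd prime $p$, so $\Z[\zeta_{4p},j]^\times$ is infinite. (Total definiteness only gives finiteness of $\mathcal{O}^\times$ modulo the central units $\Z[\lambda_{4p}]^\times$, and those are infinite here.) As a result, neither ``the unit group is finite and hence accessible'' nor ``check directly on an explicit finite unit group from the literature'' can work as stated, and the norm-form analysis under congruence conditions would have to cope with this infinite group. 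The paper instead invokes a specific structural result of Magurn--Oliver--Vaserstein, $\Z[\zeta_{4p},j]^\times=\langle \Z[\zeta_{4p}]^\times,\,j\rangle$, after which a short case analysis on the power of $j$ (using $\F_p[i]^\times\cap\langle j\rangle=\{\pm 1\}$) gives $\ker(d_2)\subseteq\Z[\zeta_{4p}]^\times$. Your proof has a genuine gap precisely at the point where this input is needed, and the false finiteness assertion means your proposed repair would not succeed.
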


\begin{proof}
(i) Since $\{\pm 1, \pm x, \pm y, \pm xy\} \le \l^\times$, we have that $\{\pm 1, \pm x, \pm y, \pm xy\} \cdot \{ \beta \in \l^\times : c_1(\beta)=1\} \le \l^\times$ and so it suffices to prove the converse inclusion.

Let $s : \Z[i,j] \to \l$ denote the function $a +bi +cj +dij \mapsto a+bx+cy+dxy$, which is a set theoretic splitting of $c_1$.
The map $c_1 : \l^\times \to \Z[i,j]^\times$ is surjective since $s(\Z[i,j]^\times) = \{\pm 1, \pm x, \pm y, \pm xy\} \le \l^\times$, i.e.~$\{\pm 1, \pm x, \pm y, \pm xy\} \le \l^\times$ maps onto $\Z[i,j]^\times = \{\pm 1, \pm i, \pm j, \pm ij\}$.
If $\alpha \in \l^\times$, then $s(c_1(\alpha^{-1}))\alpha \in \{ \beta \in \l^\times : c_1(\beta)=1\}$ since $c_1(s(c_1(\alpha^{-1}))\alpha) = c_1(\alpha^{-1}) c_1(\alpha) = 1$. Since 
\[ s(c_1(\alpha^{-1})) \le s(\Z[i,j]^\times) = \{\pm 1, \pm x, \pm y, \pm xy\},\] 
we have that $\alpha \in \{\pm 1, \pm x, \pm y, \pm xy\} \cdot \{ \beta \in \l^\times : c_1(\beta)=1\}$ as required.

(ii)
We first show that $\ker(d_2 \colon \Z[\zeta_{4p, j}]^\times \to \F_p[i, j]^\times) = \ker(\Z[\zeta_{4p}]^\times \to \F_p[i])$. 
By \cite[Lemma 7.5 (b)]{MOV83}, we have $\Z[\zeta_{4p},j]^\times = \langle \Z[\zeta_{4p}]^\times, j\rangle$.
If $\alpha \cdot j^{k} \in \Z[\zeta_{4p}, j]^\times$, $\alpha \in \Z[\zeta_{4p}]^\times$, $0 \leq k \leq 3$, is contained in $\ker(d_2)$, then $d_2(j)^k = d_2(\alpha)^{-1} \in \F_p[i]^\times \cap \langle j \rangle = \{ \pm 1 \}$, that is, $k \in \{0,2\}$. It follows that $\alpha \cdot j^k \in \Z[\zeta_{4p}]^\times$.
If $\gamma \in \{ \beta \in \Lambda^\times : c_1(\beta) = 1\}$, then $d_2(c_2(\gamma)) = 1$, that is, $c_2(\gamma) \in \ker(\Z[\zeta_{4p}, j]^\times \to \F_p[i, j]) = \langle c_2(S) \rangle$. Thus the claim follows.
\end{proof}

\begin{lemma}\label{lemma:p7_2}
For $p = 7$ the kernel of the map
    \[ \Z[\zeta_{4p}]^\times \to \F_p[i]^\times, \, \zeta_{4p} \mapsto i\]
    is generated by $\alpha_6^4, \alpha_1^{22}\alpha_2, \alpha_1^{14}\alpha_3^4,\alpha_1^{22}\alpha_3\alpha_4,\alpha_1^{10}, \alpha_3\alpha_5,\alpha_1^{12}\alpha_3^6\alpha_6^{27}$,
    where
    \[ \alpha_1 = 1 + \zeta_p + \zeta_p^2, \alpha_2 = 1 + \zeta_p, \alpha_3 = 1 - \zeta_{4p}^{25}, \alpha_4 = 1 - \zeta_{4p}^{19}, \alpha_5 = 1 - \zeta_{4p}, \alpha_6 = \zeta_{4p}.\]
\end{lemma}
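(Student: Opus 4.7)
The plan is to reduce the kernel calculation to elementary linear algebra in a cyclic group, after first producing explicit generators of $\Z[\zeta_{28}]^\times$. By Dirichlet's unit theorem, since $\Q(\zeta_{28})$ is a totally imaginary field of degree $\varphi(28)=12$, the unit group has the form
\[ \Z[\zeta_{28}]^\times \;\cong\; \langle \zeta_{28} \rangle \oplus \Z^5, \]
with torsion of order $28$. I would use a standard computer algebra system (such as \textsc{Magma}, in line with the computational methods already used elsewhere in the paper) to produce five independent fundamental units of $\Z[\zeta_{28}]$ together with the torsion generator $\alpha_6 = \zeta_{28}$, giving a concrete basis of $\Z[\zeta_{28}]^\times$ as a $\Z$-module (modulo torsion).

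Next I would analyse the codomain. Since $7 \equiv 3 \bmod 4$, the element $-1$ is a quadratic non-residue modulo $7$, so $\F_7[i] \cong \F_{49}$ is a field and $\F_7[i]^\times$ is cyclic of order $48$. Fixing a generator of $\F_7[i]^\times$ and computing discrete logarithms of the images of the six generators of $\Z[\zeta_{28}]^\times$, I obtain a homomorphism
\[ \phi \colon \Z^5 \oplus \Z/28 \;\longrightarrow\; \Z/48 \]
representing the reduction map on units. The kernel of $\phi$ is a finitely generated subgroup of $\Z^5 \oplus \Z/28$ which can be read off explicitly from a Smith normal form computation applied to the $1 \times 6$ matrix of discrete logarithms augmented by the relations $28 \cdot e_6 = 0$.

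Finally, I would verify that the seven elements listed in the statement, namely $\alpha_6^4$, $\alpha_1^{22}\alpha_2$, $\alpha_1^{14}\alpha_3^4$, $\alpha_1^{22}\alpha_3\alpha_4$, $\alpha_1^{10}$, $\alpha_3\alpha_5$, $\alpha_1^{12}\alpha_3^6\alpha_6^{27}$, all lie in the kernel (by direct evaluation in $\F_7[i]$) and that their exponent vectors with respect to the chosen basis span the kernel sublattice computed in the previous step. Since the free rank is only $5$, these seven elements are necessarily $\Z$-linearly dependent, and part of the verification is to check that the resulting relations are consistent with the structure of $\ker(\phi)$; this is a routine manipulation of integer matrices. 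The main obstacle is the first step: producing a certified set of fundamental units of $\Z[\zeta_{28}]$. This is a standard but non-trivial number-theoretic task best handled by computer algebra, and once these units and their images in $\F_{49}$ are available, the remaining verifications are finite computations in $\F_{49}^\times$ and in $\Z$-modules of modest size.
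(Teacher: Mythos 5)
Your proposal is correct in outline and reaches the right structure for the argument, but it takes a genuinely different and computationally heavier route at the crucial first step. Where you propose to produce a ``certified set of fundamental units'' of $\Z[\zeta_{28}]$ by a black-box computer-algebra computation, the paper instead uses classical cyclotomic-unit theory to obtain explicit generators with no certification burden: by Washington's Corollary~4.13, $\Z[\zeta_{4p}]^\times = \langle \Z[\lambda_{4p}]^\times, 1-\zeta_{4p}, \zeta_{4p}\rangle$; the elements $\alpha_1,\dots,\alpha_4$ are independent generators of the cyclotomic units of $\Z[\lambda_{4p}]$ constructed explicitly in the cited reference; and Sinnott's theorem together with $h_{4p}^+=1$ (verified in the literature for $p=7$) shows the cyclotomic units exhaust the full real unit group. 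This both explains where the specific $\alpha_i$ come from and avoids the nontrivial issue that certified fundamental-unit computations in a degree-$12$ number field are either GRH-conditional or expensive to verify rigorously. Once the generators are pinned down, the two approaches converge: the paper also reduces everything to computing the images in $\F_{49}^\times$ (observing that they all land in $\langle 1+i\rangle$, a cyclic subgroup of order $24$) and performing a kernel computation, just as you propose. So the gap in your proposal is not logical but practical: you outsource precisely the step where the paper's theoretical input pays off, and you don't explain how your CAS-produced fundamental units would be matched up with the specific elements $\alpha_1,\dots,\alpha_6$ appearing in the statement, which is needed to even formulate the verification you describe.

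One small point of sloppiness: to compute $\ker(\phi)$ for $\phi\colon \Z^5\oplus\Z/28\to\Z/48$ by Smith normal form, one must incorporate the relation $48\,e = 0$ in the codomain as well as $28\,e_6=0$ in the domain; the ``$1\times 6$ matrix augmented by $28\,e_6=0$'' description omits the former, though the intended computation is clear.
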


\begin{proof}
By~\cite[Corollary 4.13]{Washington1997} it follows that $\Z[\zeta_{4p}]^\times = \langle \Z[\lambda_{4p}]^\times, 1 - \zeta_{4p}, \zeta_{4p}\rangle$.
The elements $\alpha_i$, $1 \leq i \leq 4$ are independent generators of the cyclotomic units of $\Z[\lambda_{4p}]$ constructed in~\cite{RK1989}. As the cyclotomic units are equal to the unit group in this case by \cite[Theorem]{Sinnott1978} (using that $h_{4p}^+ = 1$ by~\cite{Linden1992}), we conclude that $\Z[\zeta_{4p}]^\times = \langle \alpha_i \mid 1 \leq i \leq 6 \rangle$.
    Since $1$ is not a square modulo $p$, $\F_p[i]$ is a finite field.
    The images of the generators are $3 = (1 + i)^4$, $2 = (1 +i)^8$, $1 - i = (1 + i)^7, 1 +i, 1 -i$ and $i = (1 + i)^8$ respectively. As $\langle 1 + i \rangle \leq \F_p[i]^\times$ is a subgroup of order $24$, the claim follows from a kernel computation.
\end{proof}

\begin{lemma}\label{lemma:rho2units}
  For $p = 7$ we have $\rho_2(\Lambda^\times) = \{0\}$.
\end{lemma}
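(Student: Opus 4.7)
The plan is to pin down a small generating set of $\Lambda^\times$ and evaluate $\rho_2$ on each generator. By \cref{lemma:helpergeneral2}(i), $\Lambda^\times$ is generated by the torsion units $\{\pm 1, \pm x, \pm y, \pm xy\}$ together with the subgroup $\{\beta \in \Lambda^\times : c_1(\beta) = 1\}$; by part~(ii), the latter is generated by any lifts $\tilde\alpha_i \in \Lambda^\times$ with $c_1(\tilde\alpha_i) = 1$ of the seven generators $\alpha_i$ of $\ker(\Z[\zeta_{4p}]^\times \to \F_p[i]^\times)$ listed in \cref{lemma:p7_2}. Since $\rho_2$ is a group homomorphism, it suffices to verify the claim on these $8+7$ elements.

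For the torsion units, the images under $b_2$ in $\F_2 D_{4p}$ are simply $1, x, y, xy$, and a direct computation via the matrix representation in~\eqref{eq:rhos} shows that each determinant in $(Z/2Z)^\times$ reduces to the identity of $(R_p/2R_p)[C_2]^\times/(1+\mathfrak{p}R_pC_2) \cong \Z/2$ (where the identification is that of \cref{lemma:p7}(ii)). For each generator $\alpha_i$, Milnor patching on the pullback~\eqref{eq:square2} produces a unique $\tilde\alpha_i \in \Lambda^\times$ corresponding to the pair $(1, \alpha_i) \in \Z[i,j]^\times \oplus \Z[\zeta_{4p},j]^\times$; the required compatibility $d_1(1) = d_2(\alpha_i)$ is precisely the kernel condition $\alpha_i \in \ker(\Z[\zeta_{4p}]^\times \to \F_p[i]^\times)$. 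Such a lift can be written down in $\Z Q_{8p}/(x^{2p}+1)$ using CRT on the factorisation $x^{2p}+1 = \Phi_4(x)\Phi_{4p}(x)$ in $\Z[x]$; one then reduces $\tilde\alpha_i$ modulo $2\Lambda$, pushes it through the matrix map $\F_2 D_{4p} \to M_2(\F_2[z])$ as in the proof of \cref{lemma:rho2u}, takes the determinant in $(\F_2[z])^\times$, and reads off the resulting element of $\Z/2$ via \cref{lemma:p7}(ii).

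The main obstacle is the bookkeeping of these seven Milnor lifts and the associated matrix determinants. Although the $\Z/2$ target collapses each evaluation to a single parity check, the generators involve cyclotomic units of $\Z[\zeta_{28}]$ raised to moderately large powers (e.g.~$\alpha_1^{22}\alpha_2$ or $\alpha_1^{12}\alpha_3^6\alpha_6^{27}$), so the CRT computation of each lift and the subsequent reduction of determinants in $\F_2[z] \cong \F_8$ need to be organised carefully. Multiplicativity of $\rho_2$ provides one simplification --- factoring each generator into simpler pieces --- but only after ensuring the overall Milnor-compatibility is preserved. Once this bookkeeping is in place, the verification reduces to a finite, essentially mechanical computation yielding $\rho_2(\tilde\alpha_i) = 0$ for every $i$.
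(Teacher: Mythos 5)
Your overall plan---write down a generating set for $\Lambda^\times$ via \cref{lemma:helpergeneral2} and evaluate $\rho_2$ on each generator---is the same as the paper's, and the treatment of the torsion units $\pm1,\pm x,\pm y,\pm xy$ is correct. Where you diverge is in what to do with the seven kernel generators from \cref{lemma:p7_2}: you propose to construct, for each $\beta_i$ (a product such as $\alpha_1^{22}\alpha_2$ or $\alpha_1^{12}\alpha_3^6\alpha_6^{27}$), the Milnor/CRT lift $(1,\beta_i)\in\Lambda^\times$ and then push the whole thing through the matrix map, which is precisely the bookkeeping nightmare you flag as the ``main obstacle.'' The paper sidesteps this entirely: it observes that $\ker(c_2)=(\Phi_{4p}(x))$ is annihilated by the composite $\Lambda\xrightarrow{b_2}\F_2D_{4p}\to M_2(Z/2Z)$ underlying $\rho_2$, so that $\rho_2$ of the unit lift of $\beta_i$ can be computed from \emph{any} $c_2$-preimage---it need not be a unit or satisfy $c_1=1$. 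This frees one to choose the obvious monomial lifts $\tilde\alpha_1=1+x^4+x^8$, $\tilde\alpha_3=1-x^{25}$, $\tilde\alpha_6=x$, etc.\ of the six $\alpha_j$ themselves, compute $\rho_2(\tilde\alpha_j)\in\Z/2$ for each (a one-line calculation), and then read off $\rho_2$ on any $\beta_i=\prod\alpha_j^{e_j}$ by multiplicativity of the ring map and the determinant, with the exponents $e_j$ only mattering mod $2$. Your hedge about ``ensuring the overall Milnor-compatibility is preserved'' when factoring is exactly the worry this trick removes. So your route is in principle correct but considerably heavier, and it is in any case left unexecuted: you assert without calculation that each $\rho_2(\tilde\alpha_i)=0$, whereas the paper's simplification reduces the entire lemma to six trivial evaluations and a parity check.
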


\begin{proof}
  First note that $\rho_2(-1) = \rho_2(x) = \rho_2(y) = 0$ and hence from Lemma~\ref{lemma:helpergeneral2}~(ii) it follows that $\rho_2(\Lambda^\times) = \rho_2(\{ \beta \in \LL^\times : c_1(\beta) = 1\})$.
  Denote by $\alpha_1,\dotsc,\alpha_6$ and $\beta_1,\dotsc,\beta_6$ the generators of $\Z[\zeta_{4p}]^\times$ and $\ker(\Z[\zeta_{4p}]^\times \to \F_p[i]^\times)$ respectively from Lemma~\ref{lemma:p7_2}.
  By construction, there exist preimages $\tilde \beta_i$ of the $\beta_i$ under $c_2$ with $\tilde \beta_i \in \Lambda^\times$ and $c_1(\tilde{\beta_i}) = 1$. In particular, by Lemma~\ref{lemma:helpergeneral2}~(ii), such preimages $\tilde \beta_i$ generate $\Lambda^\times$.
  Since $\ker(c_2) = (\Phi_{4p}(x))$ and $\rho_2(\Phi_{4p}(x)) = 0$, it suffices to show that every $\beta_i$ has a preimage under $c_2$ that is mapped to zero under $\rho_2$.
  To establish this, we consider the following elements of $\Lambda$:
  \[ \tilde{\alpha}_1 = 1 + x^4 + x^8, \tilde{\alpha}_2 = 1 + x^4, \tilde{\alpha}_3 = 1 - x^{25}, \tilde\alpha_4 = 1 - x^{19}, \tilde\alpha_5 = 1 - x, \tilde{\alpha}_6 = x. \] 
  Then $c_2(\tilde{\alpha}_i) = \alpha_i$ for $1 \leq i \leq 6$, $\rho_2(\tilde{\alpha}_i) = 0$ for $i \in \{1,2,6\}$, and $\rho_2(\tilde{\alpha}_i) = 1$ for $i \in \{3,4,5\}$. 
  It follows that preimages of the $\beta_i$ under $c_2$ vanish under $\rho_2$. For example, $\tilde{\alpha}_1^{12} \tilde\alpha_3^6 \tilde\alpha_6^{27}$ is a preimage of $\beta_6$ under $c_2$ with
  image $\rho_2(\tilde \alpha_6) = 0$ under $\rho_2$.
\end{proof}

\begin{proof}[Proof of \cref{cor:N15}]
We have $Q_{56} = Q_{8 \cdot 7}$ and hence $p = 7$ with the notation of the preceding sections.
Let $u = u_{(N, 15)}$
and let $X = {(R_p/2R_p[C_2]^\times)}/({1+\mathfrak{p}R_pC_2})$.
Recall that by~\cref{prop:swansetup}~(iii), the maps $\rho_1$ and $\rho_2$ from~\cref{eq:rhos} induce a bijection
\[
\Z D_{4p}^\times \backslash \F_2 D_{4p}^\times /{\LL^\times}
\longrightarrow (\Z/2 \oplus X )/ (\rho_1,\rho_2)(\Lambda^\times).
\]
Now the composition of the canonical projections
$\Z/2 \oplus X \to
X
\to X/\rho_2(\Lambda^\times)
$ is trivial on $(\rho_1,\rho_2)(\Lambda^\times)$.
Thus we have an induced homomorphism
\[ \F_2 D_{4p}^\times \to \Z D_{4p}^\times \backslash \F_2 D_{4p}^\times /{\LL^\times } \to (\Z/2 \oplus X)/(\rho_1, \rho_2)(\Lambda^\times) \to X/\rho_2(\Lambda^\times),\]
which is in fact equal to $\rho_2$ by \cref{lemma:rho2units}.
Since $\rho_2(u_{28, 15}) \neq 0$ by \cref{lemma:rho2u}~(i), we conclude that $[u_{28, 15}]$ is not the trivial double coset and hence $(N, 15)$ not free by \cref{prop:swansetup}~(i).
\end{proof}

\section{Constructing CW-complexes from Swan modules} \label{s:CW}

The primary aim of this section will be to use the non-free stably free Swan module constructed in \cref{s:swan-modules} to prove \cref{thmx:CW}. In \cref{ss:CW-prelim}, we give preliminaries on the homotopy types of CW-complexes. In \cref{ss:CW-construction}, we give more details on \cref{construction:Swan} and its properties. We prove \cref{thmx:CW} in \cref{ss:CW-proof}, and then we prove \cref{thm:all-dimensions} in \cref{ss:CW-extra}.

\subsection*{Conventions} 
All sets whose elements are CW-complexes will be assumed to be sets of homotopy types. All spaces will be assumed to be connected and equipped with a choice of basepoint.

\subsection{Preliminaries on CW-complexes} \label{ss:CW-prelim}

For $n \ge 2$, a \textit{$(G,n)$-complex} is an $n$-dimensional CW-complex $X$ with fundamental group $G$ such that $\wt X$ is $(n-1)$-connected.

Let $\HT(G,n)$ denote the set of homotopy types of finite $(G,n)$-complexes. This has the structure of a graded tree with edges between each $X$ and $X \vee S^n$, and grading given by the directed Euler characteristic $\vv\chi (X) = (-1)^n \chi(X)$.
For fixed $G$ and $n$, this takes a minimal value $\chi_{\min}(G,n):= \min\{ \vv\chi(X) :  X \in \HT(G,n)\}$.
Let $\HT_{\min}(G,n) := \{ X \in \HT(G,n) : \vv\chi(X) = \chi_{\min}(G,n)\}$ denote the set of \textit{minimal} finite $(G,n)$-complexes.

It was shown by Dyer \cite[Theorem 1]{Dy78} that, if $G$ is a finite group and $n \ge 2$, then $\HT(G,n)$ has cancellation at level two, i.e.~for each $\ell \ge 2+\chi_{\min}(G,n)$, there is a unique $X \in \HT(G,n)$ with $\vv\chi(X) = \ell$. Thus, for a finite group $G$, $\HT(G,n)$ takes the general form given in \cref{figure:trees-example}, with some finite number (possibly equal to one) of vertices at levels $\ell = 0$ and $\ell=1$ respectively.

\begin{figure}[h] \vspace{-4mm} 
\begin{center}
\begin{tabular}{l}
\begin{tikzpicture}
\draw[fill=black] (-0.25,0) circle (2pt);
\draw[fill=black] (0.5,0) circle (2pt);
\draw[fill=black] (2,0) circle (2pt);
\draw[fill=black] (2,1) circle (2pt);
\draw[fill=black] (2,2) circle (2pt);
\draw[fill=black] (2,3) circle (2pt);
\draw[fill=black] (1.25,0) circle (2pt);
\draw[fill=black] (2.75,0) circle (2pt);
\draw[fill=black] (0.5,1) circle (2pt);
\draw[fill=black] (3.5,1) circle (2pt);
\draw[fill=black] (-1,1) circle (2pt);
\draw[fill=black] (5,1) circle (2pt);
\draw[fill=black] (-1,0) circle (2pt);
\draw[fill=black] (-1.75,0) circle (2pt);
\draw[fill=white] (5.85,0) circle (0pt);

\node at (2,3.8) {$\vdots$};
\draw[black] (2,1);
\draw[thick] (-0.25,0) -- (0.5,1)--(2,2) (0.5,0) -- (0.5,1) (2,0) -- (2,1) 
(2,1) -- (2,2) -- (2,3) (1.25,0)--(2,1)--(2.75,0) (3.5,1)--(2,2) (-1,1)--(2,2)--(5,1) (-1.75,0)--(-1,1)--(-1,0);
\end{tikzpicture}
\end{tabular}
\end{center}
\caption{General form of $\HT(G,n)$ when $G$ is a finite group}
\label{figure:trees-example}
\vspace{-2mm}
\end{figure}
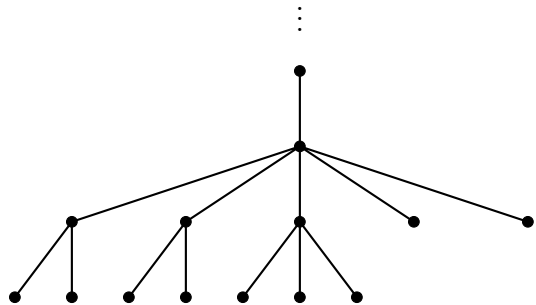

The following is a consequence of a theorem of Browning \cite[Theorem 5.4]{Br78} (see \cite[p252]{Dy79a}). Recall from the introduction that a finite group $G$ has \textit{free period $k$} if there exists a $k$-periodic resolution of finitely generated free $\Z G$-modules. 

\begin{lemma} \label{lemma:canc-except-periodic-case}
Let $n \ge 2$ and let $G$ be a finite group. If $G$ does not have free period $n+1$, then $\HT(G,n)$ has cancellation at level one, i.e.~for each $\ell \ge 1+\chi_{\min}(G,n)$, there is a unique $X \in \HT(G,n)$ with $\vv\chi(X) = \ell$.   
\end{lemma}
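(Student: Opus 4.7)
The plan is to deduce this from a theorem of Browning \cite[Theorem 5.4]{Br78}, as recorded by Dyer \cite[p252]{Dy79a}. Browning's result is a stable cancellation theorem for finite $(G,n)$-complexes whose numerical bound on $\vv\chi$ depends on whether $G$ admits a free $(n+1)$-periodic resolution of $\Z$. My job is to recognise our hypothesis as the case of his theorem that gives cancellation one level lower than Dyer's.

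The first step is to translate into algebra. Each $X \in \HT(G,n)$ is determined up to homotopy by its cellular chain complex $C_*(\wt X)$, viewed as an algebraic $n$-complex, i.e.~an exact sequence of finitely generated projective $\Z G$-modules resolving $\Z$. At level $\chi_{\min}(G,n) + k$ this chain complex has the form
\[ 0 \to \Z \to P_n \oplus \Z G^k \to P_{n-1} \to \cdots \to P_0 \to \Z \to 0, \]
where $(P_i)_i$ is the chain complex of projectives realising a minimal $(G,n)$-complex. Homotopy types at this level are thus classified by equivalence classes of such pairs (module plus $k$-invariant) modulo the combined action of $\Aut(G)$, $\Aut_{\Z G}(\pi_n)$, and the Swan subgroup $T(G) \le \wt K_0(\Z G)$. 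Dyer's cancellation at level two already handles $k \ge 2$, so it suffices to prove uniqueness at $k = 1$.

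The second step is to rule out non-trivial Swan-type perturbations at level one. By \cref{construction:Swan} (applied to a minimal $(G,n)$-complex), any additional homotopy type at level $\chi_{\min} + 1$ must arise as $X_r$ for some $r \in (\Z/|G|)^\times$ with $[(N,r)] \neq 0 \in \wt K_0(\Z G)$. Browning's theorem shows that this is impossible precisely when $G$ has no free $(n+1)$-periodic resolution: in that case the top module $P_n$ of a minimal algebraic $n$-complex is not stably free, and one obtains $P_n \oplus (N,r) \cong P_n \oplus \Z G$ so that the Swan perturbation is absorbed into $P_n \oplus \Z G$. Combined with the $\Aut(G)$-action on $k$-invariants, this collapses all potential level-one complexes onto a single homotopy type.

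The main obstacle is precisely to justify this absorption, and this is exactly what Browning's cancellation theorem delivers: it identifies the Swan subgroup as acting trivially on $(G,n)$-complexes one level above the minimum whenever the minimal algebraic $n$-complex fails to be a free resolution. Granting \cite[Theorem 5.4]{Br78}, the lemma is a direct specialisation, since the hypothesis "$G$ does not have free period $n+1$" is the defining condition for Browning's numerical bound to drop from $\chi_{\min} + 2$ to $\chi_{\min} + 1$.
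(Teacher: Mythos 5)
Your instinct to invoke Browning \cite[Theorem 5.4]{Br78} via Dyer's account \cite[p252]{Dy79a} is exactly what the paper does: the lemma is stated as a direct consequence of Browning's theorem, with no further proof given. So the overall route matches. However, your attempt to reconstruct \emph{why} Browning's theorem gives the lemma contains several errors that would need to be fixed before this could stand as a proof.

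The central problem is the algebraic model you write down. The exact sequence $0 \to \Z \to P_n \oplus \Z G^k \to P_{n-1} \to \cdots \to P_0 \to \Z \to 0$ presupposes that a minimal $(G,n)$-complex has $\pi_n \cong \Z$. But that happens precisely when $G$ \emph{does} have free period $n+1$ (this is the content of the first paragraph of the proof of \cref{thm:main-homotopy-classification}(i)), which is the opposite of the hypothesis in the lemma. In the case actually at hand, the kernel module $\pi_n$ of a minimal complex is \emph{not} $\Z$, so the Swan-module/$k$-invariant machinery of \cref{construction:Swan} and the identification $H^{n+1}(G;\pi_n) \cong \Z/|G|$ is simply not available. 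Relatedly, your claim that every complex at level $\chi_{\min}+1$ "must arise as $X_r$" is not justified: \cref{construction:Swan} modifies a given complex, but nothing shows it exhausts all homotopy types at that level when $\pi_n \not\cong \Z$. Finally, the "absorption" step $P_n \oplus (N,r) \cong P_n \oplus \Z G$ is asserted without argument; since $(N,r)$ is projective rank one and $P_n$ is not stably free, this would require a cancellation theorem (Jacobinski/Eichler-type or Browning's own machinery), not follow from the failure of stable freeness. So while the citation is right, the intermediate mechanism you describe does not reduce the lemma to Browning's theorem — it mixes in the periodic-case picture that the hypothesis explicitly rules out.
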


If $G$ is non-trivial and has free period $k=n+1$, then it has $k$-periodic cohomology and so $k$ is even (see, for example, \cite[Proposition 40.2]{Jo03a}) and $n$ is odd. It follows that, if $n \ge 2$ is even and $G$ is finite, then $\HT(G,n)$ has cancellation at level one (see also \cite[Corollary 4.7]{Ni20a}).

For $n \ge 2$ and an arbitrary finitely presented group $G$, a finite $(G,n)$-complex $X$ has an associated triple $T(X) = (\pi_1(X), \pi_n(X),k^{n+1}(X))$ where $G=\pi_1(X)$, where $\pi_n(X)$ is a $\Z G$-module (under the action coming from $\pi_n(X) \cong \pi_n(\wt X)$) and where $k^{n+1}(X) \in H^{n+1}(G;\pi_n(X))$ is the $(n+1)$th \textit{$k$-invariant} (see \cite{MW50,Dy76}). 
An abstract triple $T = (\pi_1,\pi_n,k)$ consists of a group $\pi_1$, a $\Z[\pi_1]$-module $\pi_n$ and a class $k \in H^{n+1}(\pi_1;\pi_n)$. We say two triples $T = (\pi_1,\pi_n,k)$ and $T' = (\pi_1',\pi_n',k')$ are isomorphic (written $T \cong T'$) if there exists a group isomorphism $\theta : \pi_1 \to \pi_1'$ and an isomorphism of $\Z[\pi_1]$-modules $f:\pi_n \to (\pi_n')_\theta$ such that $k \mapsto k'$ under the composition
\[ H^{n+1}(\pi_1;\pi_n) \xrightarrow[]{f_*} H^{n+1}(\pi_1;(\pi_n')_\theta) \xrightarrow[]{(\theta^{-1})^*} H^{n+1}(\pi_1';\pi_n'). \]
For more details, see \cite[p250]{Dy76}.

The following is standard and was shown by MacLane--Whitehead in the case $n=2$ \cite[Theorem 1]{MW50}, though the same argument works for all $n \ge 2$ (see \cite[p250]{Dy76}).

\begin{lemma} \label{lemma:triples}
Let $X$ and $Y$ be finite $n$-complexes with $(n-1)$-connected universal covers. Then $X \simeq Y$ if and only if $T(X) \cong T(Y)$.    
\end{lemma}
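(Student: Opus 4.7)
The forward direction is routine: a pointed homotopy equivalence $f : X \to Y$ yields $\theta = \pi_1(f)$ and an isomorphism $\pi_n(f) : \pi_n(X) \to (\pi_n(Y))_\theta$ of $\Z[\pi_1(X)]$-modules, and naturality of the $k$-invariant construction ensures these fit into an isomorphism $T(X) \cong T(Y)$.

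For the converse, my plan is to pass through the two-stage Postnikov section. Since $\wt X$ is $(n-1)$-connected, $\pi_i(X) = 0$ for $1 < i < n$, so the Postnikov section $P_n(X)$ has only $\pi_1(X)$ and $\pi_n(X)$ non-trivial and sits in a principal fibration $K(\pi_n(X),n) \to P_n(X) \to K(\pi_1(X),1)$ classified by $k^{n+1}(X) \in H^{n+1}(\pi_1(X);\pi_n(X))$. The classical classification of such two-stage Postnikov spaces (which is the content of the original MacLane--Whitehead argument and extends verbatim beyond $n=2$) shows that $P_n(X)$ is determined up to homotopy by the triple $T(X)$; moreover, any isomorphism $\phi : T(X) \to T(Y)$ of triples is realised by a homotopy equivalence $g : P_n(X) \xrightarrow{\simeq} P_n(Y)$.

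To promote $g$ to a homotopy equivalence $X \to Y$, I would choose a CW model of $P_n(Y)$ in which $Y$ sits as a subcomplex and all remaining cells have dimension $\geq n+2$ (obtained by inductively attaching cells to kill $\pi_i(Y)$ for $i > n$). The composite $X \to P_n(X) \xrightarrow{g} P_n(Y)$ can then be homotoped via cellular approximation into the $n$-skeleton of $P_n(Y)$, which coincides with $Y$, yielding $f : X \to Y$ realising $\phi$ on $\pi_1$ and $\pi_n$. To see $f$ is a homotopy equivalence I would pass to universal covers: $\wt f : \wt X \to \wt Y$ is a map of simply-connected $n$-dimensional $(n-1)$-connected CW-complexes which, by construction combined with the Hurewicz theorem, induces an isomorphism on $H_n$; the remaining homology groups vanish on both sides for trivial reasons, so Whitehead's theorem applies. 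The main bookkeeping challenge will be handling the $\pi_1$-twisting: when $\theta \neq \id$ one must make a consistent identification of $K(\pi_1(X),1)$ with $K(\pi_1(Y),1)$ in order to compare $k$-invariants, but this is precisely what the definition of isomorphism of triples encodes.
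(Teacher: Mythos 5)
Your argument is correct and reconstructs the standard MacLane--Whitehead proof via two-stage Postnikov systems: the paper itself gives no proof, instead citing \cite[Theorem 1]{MW50} for $n=2$ and \cite[p250]{Dy76} for the general case, which is precisely the argument you spell out (classify $P_n(-)$ by the triple, realise an isomorphism of triples by a homotopy equivalence of Postnikov sections, compress into the $n$-skeleton by cellular approximation, and conclude via Hurewicz and Whitehead on universal covers).
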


If $G$ is a finite group and $X$ is a finite $(G,n)$-complex, then dimension shifting gives an isomorphism $H^{n+1}(G;\pi_n(X)) \cong \wh H^0(G;\Z) \cong \Z/|G|$ which has the property that $k^{n+1}(X)$ maps to an element of $(\Z/|G|)^\times$ (see \cite[Section 2]{Dy76}). 
If $X$ and $Y$ are finite $(G,n)$-complexes for which there exists an isomorphism $\rho : \pi_n(X) \xrightarrow[]{\cong} \pi_n(Y)$ then we can choose an identification $i_Y : H^{n+1}(G;\pi_n(Y)) \xrightarrow[]{\cong} \Z/|G|$ and define $i_X = i_Y \circ H^{n+1}(G;\rho) : H^{n+1}(G;\pi_n(X)) \xrightarrow[]{\cong} \Z/|G|$.
This leads to classes $k^{n+1}(X)$, $k^{n+1}(Y) \in (\Z/|G|)^\times$ which are related by $k^{n+1}(X) = r \cdot k^{n+1}(Y)$ for some $r \in (\Z/|G|)^\times$ which is independent of the choice of $\rho$ and $i_X$ (see \cite[Section 2]{Dy76}). Thus $k$-invariants are often viewed as relative invariants (see \cite[Section 34]{Jo03a}).

\subsection{Construction and basic properties} \label{ss:CW-construction}

We begin by establishing the following, which proves that \cref{construction:Swan} is well-defined and has the properties mentioned in the introduction.

\begin{prop} \label{prop:construction-info}
Let $G$ be a finite group, let $r \in \SF(G)$, let $X$ be a finite $(G,n)$-complex and let $X_r$ be as defined in \cref{construction:Swan}. Then $X_r$ is a well-defined finite $(G,n)$-complex such that $\pi_n(X_r) \cong \pi_n(X)$ are isomorphic as $\Z G$-modules and $k^{n+1}(X_r) = r \cdot k^{n+1}(X) \in \Z/|G|$.
\end{prop}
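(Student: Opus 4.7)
The proposition has three assertions: that $X_r$ is realizable as a finite $(G,n)$-complex, that $\pi_n(X_r) \cong \pi_n(X)$ as $\Z G$-modules, and that $k^{n+1}(X_r) = r \cdot k^{n+1}(X)$. The plan is to dispose of the first two by direct verification using the definitions, then focus the main argument on the $k$-invariant identity, where the pullback structure in the construction is essential.

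For realizability, I would first check that the modified complex $m^*(C_*(\wt X))$, augmented by the second projection $p_2 \colon P \twoheadrightarrow \Z$ (where $P := C_0(\wt X) \times_{\varepsilon,m} \Z$), is exact. Exactness in positive degrees is inherited from $C_*(\wt X)$. At $P$, the pullback yields a short exact sequence $0 \to I_G \to P \xrightarrow{p_2} \Z \to 0$ with $I_G = \ker(\varepsilon)$ embedded via the first projection $p_1$, and $\IM(\partial_1,0) = I_G \times \{0\} = \ker(p_2)$. Since $r \in \SF(G)$, the top module $P \cong (N,r) \oplus \Z G^k$ is stably free, so adjoining finitely many acyclic summands $\Z G \xrightarrow{\id} \Z G$ produces a chain-homotopy-equivalent exact sequence $C_*'$ of finitely generated free $\Z G$-modules of length $n$. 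Applying \cite[Corollary 8.27]{Jo12a} realizes $C_*'$ as $C_*(\wt X_r)$ for a finite $(G,n)$-complex $X_r$, with fundamental group $G$ recorded by the $\Z G$-action. Moreover, $\pi_n(X_r) \cong H_n(C_*')$ depends only on the differentials in degrees $\ge 2$ (unaltered by $m^*$) and is unaffected by acyclic stabilization, so $\pi_n(X_r) \cong H_n(C_*(\wt X)) \cong \pi_n(X)$ as $\Z G$-modules.

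For the $k$-invariant identity, using the identification just obtained, both invariants live in $\Ext^{n+1}_{\Z G}(\Z,\pi_n(X)) = H^{n+1}(G;\pi_n(X))$ and are represented by the Yoneda classes of
\begin{align*}
E   &\colon 0 \to \pi_n(X) \to C_n(\wt X) \to \cdots \to C_0(\wt X) \xrightarrow{\varepsilon} \Z \to 0, \\
E_r &\colon 0 \to \pi_n(X) \to C_n(\wt X) \to \cdots \to C_1(\wt X) \xrightarrow{(\partial_1,0)} P \xrightarrow{p_2} \Z \to 0.
\end{align*}
The pullback equation $\varepsilon \circ p_1 = m \circ p_2$ supplies a morphism of extensions $E_r \to E$ (identity in degrees $\ge 1$, $p_1$ in degree $0$, and $m$ on $\Z$) displaying $E_r$ as the pullback of $E$ along $m \colon \Z \to \Z$. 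Hence $[E_r] = m^*[E]$ in $\Ext^{n+1}$, and since $m$ is multiplication by a lift of $r^{-1}$, the pullback acts as multiplication by $r^{-1}$ on $H^{n+1}(G;\pi_n(X))$. Under the paper's identification $H^{n+1}(G;\pi_n(X)) \cong \Z/|G|$, calibrated so that inserting a Swan module $(N,r)$ at the end of the resolution corresponds to the multiplier $r$ (matching the Swan-map convention set up in the preliminaries), this translates to $k^{n+1}(X_r) = r \cdot k^{n+1}(X) \in \Z/|G|$.

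The main obstacle is the sign/inversion bookkeeping in the last step: the Yoneda pullback naturally produces a factor of $r^{-1}$, whereas the statement requires $r$. Reconciling the two amounts to verifying that the paper's identification $H^{n+1}(G;\pi_n(X)) \cong \Z/|G|$ is normalized (as the Swan-map convention dictates) so that the factor $r^{-1}$ from pullback along $\times r^{-1}$ translates to $r$ in $\Z/|G|$ — equivalently, the construction was defined with $m = \times r^{-1}$ precisely so that the final formula reads $r \cdot k^{n+1}(X)$ rather than $r^{-1} \cdot k^{n+1}(X)$. Once this calibration is tracked, the computation is purely formal.
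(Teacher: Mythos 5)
Your proposal is correct and follows essentially the same route as the paper: verify exactness of $m^*(C_*(\wt X))$, use stable freeness (hence $r \in \SF(G)$) to stabilise to a free complex, realize it via Johnson's theorem, read off $\pi_n$, and then compare $k$-invariants via the canonical comparison map from $m^*(C_*(\wt X))$ to $C_*(\wt X)$. The paper implements the $k$-invariant comparison through a chain map $f_*$ (with $f_0 = p_1$, identity elsewhere) and cites the $k$-invariant formulation of \cite[Section 34]{Jo03a} together with \cite[Lemma 4.11]{Ni20a}; your Yoneda-pullback framing is the same argument in different language, since the morphism of long extensions $E_r \to E$ that is the identity on $\pi_n$ and $m$ on $\Z$ exhibits precisely the comparison $[E_r] = m^*[E]$.

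The one place where your writeup falls short of a complete proof is exactly where you flag it: the reconciliation of the factor $r^{-1}$ coming from pullback along $m = \times r^{-1}$ with the claimed factor $r$. As stated, your resolution is circular (``the construction was defined with $m = \times r^{-1}$ precisely so that the final formula reads $r$''). This is not merely a sign convention one can wave at: the inversion is real, and it is already visible in the construction itself, where the pullback module $C_0(\wt X) \times_{\varepsilon,m} \Z$ is identified first with $(I_G, r^{-1}) \oplus \Z G^k$ and only then with $(N,r) \oplus \Z G^k$ --- the passage from $(I_G,s)$ to $(N,s^{-1})$ is exactly where the inversion is absorbed. To close the gap one needs to actually unwind the identification $H^{n+1}(G;\pi_n(X)) \cong \Z/|G|$ used to define $k^{n+1}$ as an element of $(\Z/|G|)^\times$; this is done in \cite[Section 34]{Jo03a}, and the diagram in the proof of \cite[Lemma 4.11]{Ni20a} tracks precisely how the comparison map $f_*$ affects this identification. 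So your proof is structurally complete, but to be a genuine proof rather than a proof-outline you would need to carry out that last verification rather than assert that ``the calibration works.''
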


\begin{proof}
First note that, in the notation of \cref{construction:Swan}, the chain complex $m^*(C_*(\wt X))$ is an exact sequence and has $\coker(\partial_1,0) \cong \Z$ as $\Z G$-modules by \cite[Lemma 4.12]{Ni20a}. The chain homotopy type of $m^*(C_*(\wt X))$ depends only on the homotopy type of $X$ (rather than the choice of cell structure) since pullbacks induce chain homotopy equivalences.
As described in \cref{construction:Swan}, we have that $m^*(C_*(\wt X)) \simeq C_*'$ for $C_*'$ an exact sequence of finitely generated free $\Z G$-modules. By exactness, we have $H_i(C_*') = 0$ for $1 \le i \le n-1$ and $H_0(C_*') \cong \Z$.

Thus \cite[Corollary 8.27]{Jo12a} implies there exists a finite $n$-complex $X_r$ with fundamental group $G$ such that $C_*(\wt X_r) \simeq C_*'$. More specifically, we $X_r$ can be constructed as follows:
\begin{clist}{(a)}
\item 
Add extensions of the form $\Z G \xrightarrow[]{\cong} \Z G$ to the $C_3$ and $C_2$ terms so that $m^*(C_{* \le 2}(\wt X)) \simeq C_*(\wt X_{\mathcal{P}})$ where $X_{\mathcal{P}}$ is the finite $2$-complex associated to some finite presentation $\mathcal{P}$ for $G$.
\item
Attach cells in dimensions $\ge 3$ to $X_{\mathcal{P}}$ to realise the remaining chains. This construction first appeared in work of Swan \cite[Lemma 3.1]{Sw60-II} and is attributed to Milnor.
\end{clist}

We have $H_i(\wt X_r) \cong H_i(m^*(C_*(\wt X)))=0$ for $1 \le i \le n-1$ and so $X_r$ is a finite $(G,n)$-complex. We have $\pi_n(X_r) \cong H_n(X_r) \cong H_n(m^*(C_*(\wt X))) = H_n(\wt X) \cong \pi_n(X)$ as $\Z G$-modules.
The relationship between the $k$-invariants follows from the formulation of the $k$-invariant given in \cite[Section 34]{Jo03a} and the existence of a chain map $f_* : m^*(C_*(\wt X)) \to C_*(\wt X)$ such that $f_0 : C_0(\wt X) \times_{\varepsilon,m} \Z \to C_0(\wt X)$ is the standard map $[(a,b)] \mapsto a$, and $f_i = \id$ for $i \ge 1$. This has the property that the induced map $(f_0)_* : \Z \cong H_0(m^*(C_*(\wt X))) \to H_0(\wt X) \cong \Z$ is $m$, i.e. multiplication by $r$. See \cite[Lemma 4.11]{Ni20a} and the diagram contained in its proof.
\end{proof}

The following gives a mild extension of a result of Dyer \cite[Theorem 9.1]{Dy76}, who showed that there exist bijections as in (i) and (ii), though without an explicit description of the map.

\begin{thm} \label{thm:main-homotopy-classification}
Let $G$ be a non-trivial finite group with free period $k$ and let $n=k-1$ be such that $n \ge 3$. Fix a minimal finite $(G,n)$-complex $X$. Then there are bijections:
\begin{clist}{(i)}
\item 
  $\SF(G)/{\pm{\Aut_{k}(G)}} \xrightarrow[]{\cong} \HT_{\min}(G,n)$, \, $r \mapsto X_r$.
\item 
$\SF(G)/ (\Aut_{k}(G) \cdot \FF(G)) \xrightarrow[]{\cong} \{Y \vee S^n : Y \in \HT_{\min}(G,n)\}$, \, $r \mapsto X_r \vee S^n$.
\end{clist}	
\end{thm}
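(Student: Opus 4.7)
The plan is to reduce both statements to an equivalence on $k$-invariants by combining \cref{lemma:triples} (Mac Lane--Whitehead classification) with \cref{prop:construction-info} (which records $\pi_n(X_r) \cong \pi_n(X)$ and $k^{n+1}(X_r) = r \cdot k^{n+1}(X)$). The key preliminary fact, used throughout, is that for $G$ with free period $k=n+1$ and any minimal finite $(G,n)$-complex $X$, the homotopy module $\pi_n(X) \cong \Z$ as a $\Z G$-module. This follows because the free period hypothesis provides a finite free resolution of $\Z$ of length $n+1$, which can be realised as $C_*(\wt X_0)$ for a minimal $(G,n)$-complex $X_0$ via \cite[Corollary 8.27]{Jo12a}; by Dyer's cancellation at level two, every minimal $(G,n)$-complex has the same $\pi_n$. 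Consequently $\Aut_{\Z G}(\pi_n(X)) = \{\pm \id\}$.

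For (i) I would treat well-definedness, injectivity, and surjectivity simultaneously. Given $r, s \in \SF(G)$, \cref{lemma:triples} reduces $X_r \simeq X_s$ to the existence of $(\theta, f)$ with $\theta \in \Aut(G)$ and $f : \pi_n(X_r) \to \pi_n(X_s)_\theta$ a $\Z G$-isomorphism identifying $k$-invariants. Since $\pi_n(X_r), \pi_n(X_s) \cong \Z$, one must have $f = \epsilon \cdot \id$ for some $\epsilon \in \{\pm 1\}$, and the action of $\theta$ on $H^{n+1}(G;\Z) \cong \Z/|G|$ is multiplication by $\psi_k(\theta)$ by definition. The matching of $k$-invariants then becomes $s = \epsilon \cdot \psi_k(\theta) \cdot r$ in $(\Z/|G|)^\times$, which is precisely the equivalence modulo $\pm \Aut_k(G)$. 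For surjectivity, any $Y \in \HT_{\min}(G,n)$ has $\pi_n(Y) \cong \Z$ by the preliminary observation, and its $k$-invariant, being the Ext-class of the free resolution $C_*(\wt Y)$, is a generator of $H^{n+1}(G;\Z)$, hence equals $r \cdot k^{n+1}(X)$ for some $r \in (\Z/|G|)^\times$. Then $T(Y) \cong T(X_r)$, so $Y \simeq X_r$; since $Y$ and $X$ are both minimal, comparing Euler characteristics forces $(N,r)$ to be stably free, i.e.\ $r \in \SF(G)$.

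For (ii), the same scheme applies after stabilising by $S^n$: $\pi_n(X_r \vee S^n) \cong \Z \oplus \Z G$, and $X_r \vee S^n \simeq X_s \vee S^n$ reduces to matching $k$-invariants in $H^{n+1}(G; \Z \oplus \Z G) \cong H^{n+1}(G;\Z) \cong \Z/|G|$ (as $H^{n+1}(G;\Z G) = 0$) modulo the combined actions of $\Aut(G)$ and $\Aut_{\Z G}(\Z \oplus \Z G)$. A $\Z G$-endomorphism of $\Z \oplus \Z G$ is given by a matrix with entries in $\End_{\Z G}(\Z) = \Z$, $\Hom_{\Z G}(\Z G, \Z) = \Z$ (augmentation), $\Hom_{\Z G}(\Z, \Z G) = \Z \cdot N$, and $\End_{\Z G}(\Z G) = \Z G$. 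Such an endomorphism acts on the $k$-invariant by multiplication by its $(1,1)$-entry, and the realisable values as $\phi$ ranges over automorphisms coincide with $\varepsilon((\Z G/N)^\times) = \FF(G)$ by Swan's identification \cite[Lemma 6.3]{Sw60-II}. Combined with the $\Aut(G)$-action producing $\Aut_k(G)$, and using that $-1 \in \FF(G)$ absorbs the sign, this yields the equivalence by $\Aut_k(G) \cdot \FF(G)$.

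The main obstacle is the realisation direction in (ii): showing that every $a \in \FF(G)$ is actually the $(1,1)$-entry of some $\phi \in \Aut_{\Z G}(\Z \oplus \Z G)$, not merely of an arbitrary endomorphism. This amounts to lifting a unit $u \in (\Z G/N)^\times$ with $\varepsilon(u) = a$ to an invertible matrix over $\Z G$ of the appropriate block shape, a computation implicit in \cite[Theorem 9.1]{Dy76} that must be set up carefully in the present notation. Once done, the two bijections follow from the two algebraic equivalences established above.
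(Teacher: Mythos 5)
Your proposal is correct and follows essentially the same route as the paper: reduce to Mac Lane--Whitehead triples via \cref{lemma:triples}, use $\pi_n \cong \Z$ for minimal complexes so that $\Aut_{\Z G}(\pi_n) = \{\pm\id\}$ in (i), and in (ii) reduce to computing the image of $\Aut_{\Z G}(\Z \oplus \Z G)$ acting on $H^{n+1}(G;\Z \oplus \Z G) \cong \Z/|G|$, which is $\FF(G)$. The "main obstacle" you flag is precisely \cref{lemma:deg-aut}, which the paper addresses by citing Dyer's Proposition 4.1, so you have correctly located where the real work lies; the only small imprecision is that "Dyer's cancellation at level two" gives $\pi_n(Y) \oplus \Z G^2 \cong \Z \oplus \Z G^2$ for any minimal $Y$, from which $\pi_n(Y) \cong \Z$ still requires a short rank argument rather than following immediately.
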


In particular, in order to prove \cref{thmx:CW}, we need to find a finite group $G$ with minimal free period $k$ and an integer $i \ge 1$ such that $|\SF(G)/ (\Aut_{ik}(G) \cdot \FF(G))| > 1$.

We now give a direct proof of \cref{thm:main-homotopy-classification}. Our argument closely resembles that of \cite[Theorem 9.1]{Dy76} though the use of an explicit map $r \mapsto X_r$ (via \cref{construction:Swan}) leads to a streamlined proof.
For a $\Z G$-module $M$, let $\Aut_{\Z G}(M)$ denote the group of $\Z G$-module automorphisms of $M$. The proof of the following key lemma is elementary and involves determining $\Aut_{\Z G}(\Z \oplus \Z G)$ explicitly (see \cite[Proposition 4.1 (c)$\Rightarrow$(a)]{Dy76}).

\begin{lemma} \label{lemma:deg-aut}
Let $G$ be a finite group. Then the map
\[ \deg : \Aut_{\Z G}(\Z \oplus \Z G) \to \Aut(\wh H^0(G;\Z \oplus \Z G)) \cong (\Z/|G|)^\times  \]
has image $F(G) \le (\Z/|G|)^\times$.
\end{lemma}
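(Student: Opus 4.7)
The plan is to make $\End_{\Z G}(\Z \oplus \Z G)$ completely explicit and then read off the degree from that description. Using the standard Hom-calculations $\End_{\Z G}(\Z) = \Z$, $\Hom_{\Z G}(\Z G, \Z) = \Z \cdot \varepsilon$, $\Hom_{\Z G}(\Z, \Z G) = (\Z G)^G = \Z \cdot N$, and $\End_{\Z G}(\Z G) = \Z G$ (with the last acting by right multiplication), every $\Z G$-linear endomorphism has a unique presentation
\[ \phi(x, w) = \bigl(mx + b\varepsilon(w), \, axN + wu\bigr), \qquad m, a, b \in \Z, \ u \in \Z G. \]
After identifying $\wh H^0(G; \Z \oplus \Z G) = (\Z \oplus \Z N)/(|G|\Z \oplus \Z N) \cong \Z/|G|$ via $(x, cN) \mapsto x \bmod |G|$, a short calculation gives $\deg(\phi) = m \bmod |G|$.

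For the inclusion $\IM(\deg) \subseteq F(G)$, I would restrict $\phi$ to the $G$-invariant submodule $(\Z \oplus \Z G)^G = \Z \oplus \Z N \cong \Z^2$. In the basis $\{(1,0),(0,N)\}$, this restriction acts through the integer matrix $\bigl(\begin{smallmatrix} m & b|G| \\ a & \varepsilon(u) \end{smallmatrix}\bigr)$, and because $\phi$ is an automorphism so is the restriction, forcing its determinant to be $\pm 1$. This yields $m \cdot \varepsilon(u) \equiv \pm 1 \pmod{|G|}$. Separately, reading off the $\Z G$-component of $\phi \circ \phi^{-1} = \id$ shows $u'u \equiv 1 \pmod{\Z N}$ in $\Z G$, where $u'$ comes from $\phi^{-1}$; hence $u$ represents a unit in $\Z G/N$ and $\varepsilon(u) \in F(G)$. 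Since $-1 \in F(G)$ (take the unit $-1 \in (\Z G/N)^\times$), the relation $m \equiv \pm \varepsilon(u)^{-1}\pmod{|G|}$ places $m \bmod |G|$ inside the subgroup $F(G)$.

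For the reverse inclusion $F(G) \subseteq \IM(\deg)$, given $r \in F(G)$ I would choose $u_r, u_s \in \Z G$ whose reductions in $\Z G/N$ are mutually inverse units, with $\varepsilon(u_r)=r$ and $\varepsilon(u_s)=s$, and set $t := (rs-1)/|G|$. Both $u_r u_s$ and $u_s u_r$ reduce to $1$ in $\Z G/N$, and applying $\varepsilon$ pins the integer coefficient, giving $u_r u_s = u_s u_r = 1 + tN$ in $\Z G$. I would then exhibit the explicit pair
\[ \phi(x, w) = \bigl(rx + \varepsilon(w), \, txN + wu_s\bigr), \qquad \psi(y, w') = \bigl(sy - \varepsilon(w'), \, -tyN + w'u_r\bigr), \]
and verify $\phi \circ \psi = \id$ by a direct calculation using $Nu_s = sN$, $u_r u_s = 1+tN$, and $rs - t|G| = 1$; one checks that the first component collapses to $y$ and that the $\Z G$-component collapses to $w'$ after the $N$-terms cancel. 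Since $\deg(\phi) = r$, this establishes surjectivity onto $F(G)$.

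The only genuine subtlety is book-keeping: because $\End_{\Z G}(\Z G) = \Z G$ acts by right multiplication, composition of two such endomorphisms does not match naive matrix multiplication in the bottom-right block, and one has to carefully track the extra $N$-contributions coming from $Nv = \varepsilon(v)N$ in the cross-terms. Once the matrix presentation and its composition rule are set up correctly, both inclusions reduce to short integer identities, which is why the lemma is ``elementary''.
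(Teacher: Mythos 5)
Your proof is correct and takes essentially the same approach as the paper, which states that the argument is elementary and ``involves determining $\Aut_{\Z G}(\Z \oplus \Z G)$ explicitly'' (deferring details to Dyer's Proposition 4.1(c)$\Rightarrow$(a)); your explicit $2\times 2$ block description, the degree computation $\deg(\phi)=m$, and the two inclusions via the determinant-$\pm 1$ restriction and the explicit $\phi,\psi$ pair all match that strategy.
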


\begin{proof}[Proof of \cref{thm:main-homotopy-classification}]
(i) First note that, since $G$ has free period $n+1$, it acts freely on a finite $n$-complex $K_0$ homotopy equivalent to $S^n$ \cite{Sw60-II}. Then $K=K_0/G$ is a finite $(G,n)$-complex with $\pi_n(K) \cong \Z$ as $\Z G$-modules. Since this has no $\Z G$-summands, $K$ is minimal. 

It follows that $\pi_n(Y) \cong \Z$ for any $Y \in \HT_{\min}(G,n)$, since we have $\pi_n(Y) \oplus \Z G^r \oplus \pi_n(K) \oplus \Z G^r$ for some $r \ge 0$, which implies that $\pi_n(Y) \cong \Z$. 
Thus, given $X \in \HT_{\min}(G,n)$, we obtain a map 
\[ \phi : \HT_{\min}(G,n) \to \SF(G)/{\pm{\Aut_{k}(G)}}, \quad Y \mapsto [k^{n+1}(Y) \cdot k^{n+1}(X)^{-1}].\] This is well-defined by the remarks at the end of \cref{ss:CW-prelim} as well as \cite[Lemma 7.3]{Sw60-II} which implies $k^{n+1}(X)$, $k^{n+1}(Y) \in \SF(G)$.
For injectivity, it follows from \cref{lemma:triples} that $X_r \simeq X_s$ if and only if  $(G,\Z,r) \cong (G,\Z,s)$, which holds if and only if there exists $(\theta,f) : (G,\Z) \to (G,\Z)$ such that $(\theta^{-1})^*(f^*(r))=s$. Since $\Z_\theta = \Z$, $f : \Z \to \Z$ is a $\Z G$-isomorphism and so $f = \pm \id$, and $(\theta^{-1})^* = \psi_{k}(\theta^{-1})$. Hence $s = (\theta^{-1})^*(f^*(r)) = r \cdot (\theta^{-1})^*(f^*(1)) = r \cdot (\pm \psi_{k}(\theta^{-1}))$ for some $\theta \in \Aut(G)$, i.e.~$sr^{-1} \in \pm{\Aut_{k}(G)}$.
It is surjective since, by \cref{prop:construction-info}, $\phi(X_r) = [k^{n+1}(X_r) \cdot k^{n+1}(X)^{-1}] = [r]$ for any $r \in \SF(G)$.
Thus $\phi$ is a bijection with inverse $r \mapsto X_r$.

(ii) It follows from (i) that $\{Y \vee S^n : Y \in \HT_{\min}(G,n)\} = \{X_r \vee S^n : r \in \SF(G)\}$, where both sets denote collections of homotopy types. By \cref{lemma:triples},  $X_r \vee S^n \simeq X_s \vee S^n$ if and only if $(G,\Z \oplus \Z G,r) \cong (G,\Z \oplus \Z G,s)$, which holds if and only if there exists $(\theta,f) : (G,\Z \oplus \Z G) \to (G,\Z \oplus \Z G)$ such that $(\theta^{-1})^*(f^*(r))=s$, i.e.~$(\theta^{-1})^*(f^*(1))=sr^{-1}$. Let $u : (\Z \oplus \Z G)_\theta = \Z \oplus \Z G_\theta \to \Z \oplus \Z G$ denote the standard isomorphism. Then $u \circ f \in \Aut_{\Z G}(\Z \oplus \Z G)$, and $(\theta^{-1})^* \circ u^{-1} = \psi_{k}(\theta^{-1})$ since this map fixes the coefficient module and hence acts only via a group automorphism. Hence $sr^{-1} = (\theta^{-1})^*(f^*(1)) = \psi_{k}(\theta^{-1}) \cdot \deg(u \circ f)$. Since $u \circ f \in \Aut_{\Z G}(\Z \oplus \Z G)$ and $\theta \in \Aut(G)$ are independent and can be arbitrary, \cref{lemma:deg-aut} implies this is equivalent to $sr^{-1} \in \Aut_{k}(G) \cdot \FF(G)$. The result follows.
\end{proof}

\subsection{Proof of \cref{thmx:CW}} \label{ss:CW-proof}

Let $n \ge 1$ with $n \equiv 3 \mod 4$. Let $G = Q_{56}$, which has minimal free period $4$. Then $n=4i-1$ for some $i \ge 1$. By \cref{thm:main-homotopy-classification} (ii), we have that 
\[ \{X \vee S^n : X \in \HT_{\min}(G,n)\} \cong \SF(G)/ (\Aut_{4i}(G) \cdot \FF(G)).\]
and so it suffices to prove that $\lvert \SF(G)/ (\Aut_{4i}(G) \cdot \FF(G)) \rvert > 1$.

By \cref{prop:SF(Q8p)}, we have that 
\[ \SF(G) = \{r \in (\Z/56)^\times : r \equiv \pm 1 \mod 8\} = \{ \pm 1, \pm 9, \pm 15, \pm 17, \pm 23, \pm 25\}.\]

By \cite[Proposition 1.1]{GG04}, we have
$\Aut_4(G) = ((\Z/56)^\times)^2 = \{1,9,25\}$. By \cref{lemma:psi_k-properties}, it
follows that $\Aut_{4i}(G) = (\Aut_4(G))^i = \{1,9,25\}^i$. Since $\{1,9,25\} \cong \Z/3$ as a group, we have $\{1,9,25\}^i \cong i \cdot \Z/3$ and so $\Aut_4(G)$ is equal to $\{1\}$ if $3 \mid i$ and $\{1,9,25\}$ otherwise.

We now complete the proof subject to the following lemma.

\begin{lemma} \label{cor:N9}
The Swan module $(N, 9)$ of $\Z[Q_{56}]$ is a free $\Z[Q_{56}]$-module.
\end{lemma}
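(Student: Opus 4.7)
The plan is to apply the freeness criterion from \cref{prop:swansetup}~(iii), which asserts that $(N, r)$ is free over $\Z Q_{8p}$ if and only if $\overline{\rho}([u_{(N,r)}]) = 0$ in the quotient $(\Z/2 \oplus X)/(\rho_1, \rho_2)(\Lambda^\times)$, where $X = (R_p/2R_p)[C_2]^\times/(1 + \mathfrak{p}R_pC_2)$. Specialising to $p = 7$ and $r = 9$, it suffices to show that the pair $(\rho_1(u_{(N, 9)}), \rho_2(u_{(N, 9)}))$ already vanishes in $\Z/2 \oplus X$ before passing to the quotient.

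For the first coordinate, I would invoke \cref{lem:rho1u}: since $9 \equiv 1 \mod 8$, that lemma directly yields $\rho_1(u_{(N, 9)}) = 0 \in \Z/2$. For the second coordinate, I would appeal to \cref{lemma:rho2u}~(ii), where the explicit matrix computation in $\GL_2(\F_2[z])$ was already carried out for $r = 9$ (with $s = 25$, $t = 8$) and shown to give $\rho_2(u_{(N, 9)}) = 0$. Putting these together, $(\rho_1, \rho_2)(u_{(N, 9)}) = (0, 0)$, so $\overline{\rho}([u_{(N, 9)}]) = 0$ automatically, and \cref{prop:swansetup}~(iii) concludes that $(N, 9)$ is free.

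In effect, there is no real obstacle: all of the difficulty was already absorbed into the lemmas of \cref{ss:swan-free}. Notably, unlike the proof of \cref{cor:N15} for $(N, 15)$, this argument does not even require the computation $\rho_2(\Lambda^\times) = \{0\}$ from \cref{lemma:rho2units}, because both coordinates of $u_{(N, 9)}$ are trivial \emph{before} quotienting by the unit image. The only care needed is to double-check that the representatives $s, t, m$ used in the formula for $u_{(N, r)}$ were correctly substituted in \cref{lemma:rho2u}~(ii), and that the resulting determinant $1 + z + z^3 + z^5$ indeed maps to the trivial class of $X$ under the identification of \cref{lemma:p7}~(ii).
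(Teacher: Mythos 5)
Your proposal is correct and follows essentially the same route as the paper: apply the criterion of \cref{prop:swansetup}~(iii) and show that $\rho_1(u_{(N,9)})$ and $\rho_2(u_{(N,9)})$ both vanish, using \cref{lem:rho1u} (equivalently \cref{prop:swansetup}~(iv)) and \cref{lemma:rho2u}~(ii) respectively. Your side remark that the full computation $\rho_2(\Lambda^\times)=\{0\}$ from \cref{lemma:rho2units} is not needed here is an accurate observation, since both coordinates are zero before quotienting by $(\rho_1,\rho_2)(\Lambda^\times)$.
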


Since $9^2 \equiv 25 \mod 56$, this implies that $(N,25)$ is free since $(N,25) \cong (N,9) \otimes (N,9)$ (i.e. since $\FF(G)$ is a subgroup of $(\Z/56)^\times$). Since $\Aut_{4i}(G) \subseteq \{1,9,25\}$ as shown above, we have that $\Aut_{4i}(G) \le \FF(G)$ and so $\SF(G)/(\Aut_{4i}(G) \cdot \FF(G)) = \SF(G) /{\FF(G)}$.

By \cref{cor:N15} (i.e.~\cref{thmx:SF-vs-F}), we have that $15 \not \in \FF(G)$ and so $\lvert\SF(G)/\FF(G)\rvert > 1$. In fact, since we always have $\{\pm 1\} \subseteq \FF(G)$ and we have $\{1,9,25\} \subseteq \FF(G)$ by the argument above, it follows that $\SF(G)/{\FF(G)} = \{[1],[15]\}$.
Hence $|\{X \vee S^n : X \in \HT_{\min}(G,n)\}| =2$, i.e. there exist finite $(G,n)$-complexes $X$ and $Y$ such that $X \vee S^n \not \simeq Y \vee S^n$. 
The result now follows from the fact that, since $G$ is finite, we have $X \vee 2S^n \simeq_s Y \vee 2S^n$ by \cite[Theorem 3]{Dy81}.

It remains to prove \cref{cor:N9}. The main ingredient in the proof will be \cref{lemma:rho2u} (ii).

\begin{proof}[Proof of \cref{cor:N9}]
We have $Q_{56} = Q_{8p}$ for $p = 7$. In the notation of \cref{ss:swan-free}, recall from \cref{prop:swansetup}~(iii) that the maps $\rho_1$ and $\rho_2$ from~\cref{eq:rhos} induce a bijection
\[
\overline{\rho} : \Z D_{4p}^\times \backslash \F_2 D_{4p}^\times /{\LL^\times}
\longrightarrow (\Z/2 \oplus X )/ (\rho_1,\rho_2)(\Lambda^\times)
\]
where $X = {(R_p/2R_p[C_2]^\times)}/({1+\mathfrak{p}R_pC_2})$, and $(N, 9)$ is free if and only if $\overline \rho([u_{(N, 9)}]) = 0$.

It follows from \cref{prop:swansetup}~(iv) and \cref{lemma:rho2u} (ii) that $\rho_1(u_{(N, 9)})=0$ and $\rho_2(u_{(N, 9)})=0$. Hence $\overline{\rho}([u_{(N, 9)}])=0$, which implies that $(N,9)$ is free by \cref{prop:swansetup}~(ii).
\end{proof}

Note that, whilst we did not need to explicitly describe $X$ and $Y$ in the proof, the choice of $X \in \HT_{\min}(G,n)$ is arbitrary. Since $n \equiv 3 \mod 4$, it is well-known that there is a free action of $G = Q_{56}$ on $S^n$ by diffeomorphisms (see, for example, \cite{Wa78}) and so we can take $X = S^n/G$. This is a finite $(G,n)$-complex since $\wt X \cong S^n$, and it is minimal since $\pi_n(X) \cong \pi_n(\wt X) \cong \Z$.

\subsection{CW-complexes in dimension $n \equiv 1 \mod 4$ after a single stabilisation} \label{ss:CW-extra}

In this section, we will prove the following result, which was mentioned in the introduction.

\begin{thm} \label{thm:all-dimensions}
Let $n \ge 1$ with $n \equiv 1 \mod 4$. If $X$, $Y$ are stably equivalent finite $n$-complexes with finite fundamental group and $(n-1)$-connected universal covers, then $X \vee S^n \simeq Y \vee S^n$.
\end{thm}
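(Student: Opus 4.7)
The plan is to reduce the theorem to cancellation at level one for $\HT(G, n)$ where $G := \pi_1(X) \cong \pi_1(Y)$, and then to establish this by proving every stably free Swan module over $\Z G$ is free. First, stable equivalence gives $\vv\chi(X) = \vv\chi(Y)$, so $X, Y$ lie at the same level $\ell$ of $\HT(G,n)$. If $\ell > \chi_{\min}(G, n)$ then $X \vee S^n \simeq Y \vee S^n$ by Dyer's cancellation at level two, so we may assume $X, Y \in \HT_{\min}(G, n)$; this reduces the claim to showing $|\{Z \vee S^n : Z \in \HT_{\min}(G, n)\}| = 1$. By \cref{lemma:canc-except-periodic-case} this is automatic unless $G$ has free period $k := n+1$, so we may further restrict to that case. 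Since $n \equiv 1 \pmod 4$, $k \equiv 2 \pmod 4$, i.e., $v_2(k) = 1$. By \cref{thm:main-homotopy-classification}~(ii), the desired cancellation is equivalent to $\SF(G) = \FF(G) \cdot \Aut_k(G)$, and since both subgroups on the right are contained in $\SF(G)$, it will suffice to prove the stronger statement $\SF(G) = \FF(G)$, i.e., weak cancellation.

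The strategy for this is to verify that $G$ satisfies the Eichler condition on $\Q G$ and then apply Jacobinski's cancellation theorem. The minimal period $k_0$ of $G$ divides $k$ and is even, hence $v_2(k_0) = 1$. A $2$-group with periodic cohomology is either cyclic (contributing $2$ to $k_0$) or generalized quaternion (contributing $4$); the latter is incompatible with $v_2(k_0)=1$, so the Sylow $2$-subgroup of $G$ is cyclic. All odd Sylow subgroups of $G$ are cyclic by periodicity, so $G$ is a Z-group and hence metacyclic by Burnside's theorem: $G \cong \Z/a \rtimes_\alpha \Z/b$ with $\gcd(a,b) = 1$. The minimal period of such a group equals $2 \lvert \alpha(\Z/b) \rvert$, so the hypothesis $v_2(k_0) = 1$ forces $\lvert \alpha(\Z/b) \rvert$ to be odd.

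To verify the Eichler condition --- equivalently, that $\Q G$ has no totally definite quaternion simple summand --- it suffices to show $G$ admits no quotient isomorphic to a finite non-abelian subgroup of $\H^\times$, i.e.\ a binary polyhedral group. The groups $\wt T, \wt O, \wt I$ are non-metacyclic and thus cannot be quotients of $G$, while a surjection $G \twoheadrightarrow Q_{4m}$ for some $m \ge 2$ would force $\alpha$ to have an order-two quotient (corresponding to the inversion action of the order-$4$ element on the cyclic subgroup of order $m$ in $Q_{4m}$), contradicting that $\lvert \alpha(\Z/b) \rvert$ is odd. Jacobinski's cancellation theorem for locally free modules of rank one then implies that every stably free Swan module is free, giving $\SF(G) = \FF(G)$, which completes the proof. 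The principal technical step is the structural reduction identifying $G$ as a metacyclic Z-group with odd-order twist; once this is in hand, the Eichler condition and the application of Jacobinski's theorem are standard.
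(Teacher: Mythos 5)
Your proposal is correct and follows essentially the same route as the paper: reduce to showing $\lvert\{Z \vee S^n : Z \in \HT_{\min}(G,n)\}\rvert = 1$, dispose of the non-periodic case via Browning/Dyer, and in the remaining case show $G$ has $k$-periodic cohomology with $k \equiv 2 \bmod 4$, hence cyclic Sylows, hence is metacyclic with no binary polyhedral quotient, so it satisfies the Eichler condition and stably free Swan modules are free (the paper cites Swan for this last step where you cite Jacobinski, but these are the same cancellation theorem). Your treatment of the binary polyhedral quotient is a little more compressed than the paper's --- which normalizes $G \cong C_m \rtimes_{(r)} C_n$ with $m$ odd and argues explicitly via the reduction map $(\Z/as)^\times \to (\Z/a)^\times$ --- but the underlying argument that a $Q_{4a}$ quotient forces an even-order twist is the same.
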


By results in \cref{ss:CW-prelim}, it suffices to prove that $|\{X \vee S^n : X \in \HT_{\min}(G,n)\}|=1$. By \cref{lemma:canc-except-periodic-case}, this always holds except possibly when $G$ has minimal free period $k$ dividing $n+1$. If this is the case then \cref{thm:main-homotopy-classification} (ii) implies that there is a bijection
\[ \{X \vee S^n : X \in \HT_{\min}(G,n)\} \cong \SF(G)/(\Aut_{n+1}(G) \cdot \FF(G)).\]
Since $n \equiv 1 \mod 4$, $k$ is even (see, for example, \cite[Proposition 40.2]{Jo03a}) and $k$ divides $n+1$, we must have that $k \equiv 2 \mod 4$. We will show:

\begin{lemma} \label{lemma:groups-eichler}
Let $k \ge 2$ with $k \equiv 2 \mod 4$. If $G$ is a finite group with $k$-periodic cohomology, then $G$ satisfies the Eichler condition.
\end{lemma}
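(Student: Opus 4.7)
The plan is to reduce the statement to a careful analysis of finite non-cyclic subgroups of the real quaternions and rule them out as quotients of $G$ using the structural restrictions forced by the hypothesis $k \equiv 2 \pmod 4$.

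First, I would show that every Sylow subgroup of $G$ is cyclic. For an odd prime $p$, the Sylow $p$-subgroup inherits $k$-periodic cohomology via restriction, and a $p$-group with periodic cohomology for odd $p$ is cyclic. For $p = 2$, the Sylow $2$-subgroup $P_2$ is either cyclic or generalised quaternion; in the latter case $P_2$ has minimal period $4$, which cannot divide $k \equiv 2 \pmod 4$, so $P_2$ is cyclic. By the classical theorem that every finite group with all Sylow subgroups cyclic is metacyclic (Burnside/Zassenhaus), $G$ admits a presentation $G = \langle A, B \mid A^m = B^n = 1,\, BAB^{-1} = A^r\rangle$ with $\gcd(m,n)=1$, and the minimal period of the Tate cohomology of $G$ is $2e$, where $e$ is the order of $r$ in $(\Z/m)^\times$. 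Since this minimal period divides $k$ and $k$ is not divisible by $4$, the integer $e$ is odd.

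Next, I would translate a failure of the Eichler condition into a group-theoretic statement. Suppose for contradiction that $\Q G$ has a simple summand $A$ that is a totally definite quaternion algebra over a totally real number field $F$. The projection $G \to A^\times$ has image $\bar G = G/K$ for some normal subgroup $K$, and picking any real place of $F$ gives an embedding $A \hookrightarrow \H$, so $\bar G$ is a finite subgroup of $\H^\times$. By the classical classification, $\bar G$ is cyclic, binary dihedral $Q_{4\ell}$ with $\ell \ge 2$, or one of $\widetilde T$, $\widetilde O$, $\widetilde I$. The cyclic case is excluded because $\bar G$ generates the non-commutative algebra $A$.

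I would then rule out each remaining possibility. Since $\bar G$ is a quotient of $G$, its Sylow $2$-subgroup is a quotient of the cyclic Sylow $2$-subgroup of $G$ and so is cyclic. This excludes $\widetilde T$ and $\widetilde I$ (Sylow $2 \cong Q_8$), $\widetilde O$ (Sylow $2 \cong Q_{16}$), and $Q_{4\ell}$ for even $\ell$ (Sylow $2$ generalised quaternion). The last case, $\bar G = Q_{4\ell}$ with $\ell \ge 3$ odd, has the metacyclic structure $\Z/\ell \rtimes \Z/4$ with inversion action. Here I would argue that the image of the normal cyclic subgroup $\langle A\rangle$ is a normal cyclic subgroup of $Q_{4\ell}$, hence contained in the unique maximal cyclic normal subgroup $\langle x\rangle$; a counting argument using $|\bar G| = 4\ell$ and trivial intersection of $\langle x^2\rangle$ with the complement $\langle x^i y\rangle$ then forces the image of $\langle A\rangle$ to be exactly $\langle x^2\rangle \cong \Z/\ell$. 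In particular $\ell \mid m$, and pushing the relation $BAB^{-1} = A^r$ to $\bar G$ gives $\bar A^r = \bar A^{-1}$, i.e.\ $r \equiv -1 \pmod \ell$. Hence the reduction of $r$ to $(\Z/\ell)^\times$ has order $2$, which must divide $e$, contradicting $e$ odd.

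The main obstacle is the final case $Q_{4\ell}$ with $\ell$ odd $\ge 3$: these are the only finite non-cyclic subgroups of $\H^\times$ with cyclic Sylow $2$-subgroup, and so they are not ruled out by the Sylow-theoretic argument alone. It is precisely here that the hypothesis $k \equiv 2 \pmod 4$ is used beyond just forcing Sylow $2$ to be cyclic, namely via the stronger conclusion that the metacyclic action order $e$ is odd.
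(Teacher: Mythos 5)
Your proof is correct and follows essentially the same route as the paper's: show the Sylow subgroups are all cyclic, apply Burnside/Zassenhaus to get a split metacyclic structure $C_m \rtimes_{(r)} C_n$ with period $2e$ where $e = \ord_m(r)$, deduce $e$ is odd, reduce failure of the Eichler condition to a surjection $G \twoheadrightarrow Q_{4\ell}$ with $\ell \geq 3$ odd, and derive the contradiction from $\ell \mid m$ and $r \equiv -1 \pmod{\ell}$. The differences are cosmetic — you prove the auxiliary steps directly (period restricts to subgroups; the $Q_{4\ell}$ quotient constraints) where the paper cites \cite{DM85} and \cite{Ni20b}, and you phrase the final contradiction as ``$2 \mid e$'' rather than ``$4 \mid \per(G)$''. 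One small note: your ``counting argument'' forcing the image of $\langle A\rangle$ to be $\langle x^2\rangle$ tacitly assumes $m$ is odd (a normalization the paper makes explicit via \cite[p165]{Jo03a}); without that the image could be all of $\langle x\rangle$, but then $2\ell \mid m$ and still $r \equiv -1 \pmod{\ell}$, so the conclusion is unaffected.
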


By \cite{Sw62}, this implies that $\Z G$ has projective cancellation and so stably free Swan modules are free. Hence $\SF(G)=\FF(G)$ and so $|\{X \vee S^n : X \in \HT_{\min}(G,n)\}|=1$, which completes the proof of \cref{thm:all-dimensions}.

\begin{proof}[Proof of \cref{lemma:groups-eichler}]
By \cite[Theorem 1.10]{DM85}, we have $\per(G) = \lcm_{H \in \mathcal{H}}(\per(H))$ where $\per(\cdot)$ denotes the minimal cohomological period of a group with periodic cohomology and $\mathcal{H}$ denotes the set of subgroups $H \le G$ which are $p$-hyperelementary for some prime $p$. 

If $Q_{2^n} \le G$ for some $n \ge 3$ then, since $Q_{2^n}$ is $2$-hyperelementary, we have $4 = \per(Q_{2^n}) \mid \per(G)$. This is a contradiction since $\per(G) \equiv 2 \mod 4$.
Since $G$ has periodic cohomology, its Sylow subgroups are cyclic or quaternionic \cite[XII.11.6]{CE56}. Since $Q_{2^n}$ is not a subgroup of $G$, this implies that all the Sylow subgroups of $G$ are cyclic. It now follows from a theorem of Burnside (see \cite[Theorem 1.8]{DM85}) that $G$ is a split metacyclic group, i.e.~$G \cong C_m \rtimes_{(r)} C_n$ where $(n,m)=1$, $r \in \Z/m$ is such that $r^n \equiv 1 \mod m$ and $\rtimes_{(r)}$ denotes the semidirect product where the generator $y \in C_n$ acts on the generator $x \in  C_m$ via $x \mapsto x^r$.
By \cite[p165]{Jo03a}, we can assume that $m$ is odd.

Suppose for contradiction that $G$ fails the Eichler condition. Then $G \twoheadrightarrow H$ for $H$ a binary polyhedral group (see \cite[Proposition 1.5]{Ni21a}), and this induces a quotient of the Sylow $2$-subgroups $G_2 \twoheadrightarrow H_2$ (see \cite[Lemma 6.6]{Ni19}). Since $G_2$ is cyclic, $H_2$ must be cyclic and so $H \cong Q_{4a}$ for some $a \ge 3$ odd.
By \cite[Lemma 2.6]{Ni20b}, $G \twoheadrightarrow Q_{4a}$ if and only if $a \mid m$ and $r \equiv -1 \mod a$. This implies that $m=as$ for some $s \ge 1$ and, since $4 \mid |Q_{4a}| \mid |G|$ and $m$ is odd, we have $n=4t$ for some $t \ge 1$. Hence $G \cong C_{as} \rtimes_{(r)} C_{4t}$ where $a, s$ are odd and coprime to $n$, and $r \in \Z/as$ is such that $r^{4t} \equiv 1 \mod as$ and $r \equiv -1 \mod a$.

Let $\psi: \Z/4t \to (\Z/as)^\times$, $1 \mapsto r$ denote the action in the semidirect product. By \cite[p229]{DM85}, we have $\per(G) = 2 \cdot |\IM(\psi)|$. If $q : (\Z/as)^\times \twoheadrightarrow (\Z/a)^\times$ denotes reduction mod $a$, then $q(\psi(1))=-1$ and so $q(\IM(\psi)) = \{\pm 1\}$. This implies that $2 \mid |\IM(\psi)|$ and so $4 \mid \per(G)$, which is a contradiction.
\end{proof}

\section{Swan modules associated to group automorphisms}
\label{s:Swan-module-automorphisms}

In this section, we begin by recalling two facts (\cref{ss:Aut-prelim}) before determining $\IM(\psi_4)$ for the groups $Q(16,p,1)$ (\cref{ss:Aut-out}). We then use this in \cref{ss:Aut-proof} to prove \cref{thmx:automorphism} subject to calculations in \textsc{Magma} which are further explained in \cref{appendix:magma}.

\subsection{Preliminaries on the action of $\Aut(G)$} \label{ss:Aut-prelim}

Let $\Inn(G) \le \Aut(G)$ denote the group of inner automorphisms and let $\Out(G) = \Aut(G)/\Inn(G)$ denote the group of outer automorphisms. The following is an immediate consequence of the well-known fact that inner automorphisms induce the identity on group cohomology.

\begin{lemma} \label{lemma:out}
Let $G$ be a finite group with $k$-periodic cohomology. If $\theta \in \Inn(G)$, then $\psi_k(\theta) = 1$. In particular, $\psi_k$ factors as $\psi_k : \Out(G) \to (\Z/|G|)^\times$. 
\end{lemma}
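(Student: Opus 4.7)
The plan is to reduce the lemma to the classical fact that inner automorphisms act trivially on group cohomology with any coefficient module, and then just unwind the definition of $\psi_k$.

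First I would recall why, for any $\Z G$-module $M$ and any $g \in G$, the induced automorphism $c_g^* : H^n(G;M) \to H^n(G;M)$ associated to the inner automorphism $c_g : G \to G$, $h \mapsto g h g^{-1}$, is the identity. The standard proof is to model $H^n(G;M)$ via the bar resolution and exhibit an explicit cochain homotopy between the identity and the endomorphism induced by $c_g$ together with multiplication by $g$ on $M$; see for instance Brown, \emph{Cohomology of Groups}, III.8.1. Specialising to $M = \Z$ with trivial $G$-action, this says that for any $\theta \in \Inn(G)$, the map $H^k(\theta;\Z) : H^k(G;\Z) \to H^k(G;\Z)$ equals the identity.

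Next, recall that $\psi_k(\theta)$ is defined as the element of $(\Z/|G|)^\times$ corresponding to $H^k(\theta;\Z)$ under the identifications $\Aut(H^k(G;\Z)) \cong \Aut(\Z/|G|) \cong (\Z/|G|)^\times$. Since $H^k(\theta;\Z) = \id$ for $\theta \in \Inn(G)$, we obtain $\psi_k(\theta) = 1$. As $\psi_k : \Aut(G) \to (\Z/|G|)^\times$ is a group homomorphism, its kernel contains $\Inn(G)$, and so it descends to a well-defined homomorphism on the quotient $\Out(G) = \Aut(G)/\Inn(G)$, proving the second assertion.

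There is no real obstacle here: the only non-formal input is the classical cochain-homotopy argument showing that conjugation acts trivially on group cohomology, which I would either cite or sketch in one line using the bar resolution. Everything else is a direct unwinding of the definition of $\psi_k$ given just before the lemma statement.
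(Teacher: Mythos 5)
Your proof is correct and follows exactly the same route as the paper, which simply cites the well-known fact that inner automorphisms act trivially on group cohomology and unwinds the definition of $\psi_k$. You have just expanded the argument with a sketch of why that classical fact holds; no discrepancies to report.
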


The following was proven independently by Dyer \cite[Note (b) p276]{Dy76} and Davis \cite{Da83}.

\begin{prop} \label{prop:davis-dyer}
If $G$ has free period $k$, then $\Aut_k(G) \le \SF(G)$. That is, $(N,\psi_k(\theta))$ is stably free for all $\theta \in \Aut(G)$.    
\end{prop}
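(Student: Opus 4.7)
The plan is to exhibit a free $k$-periodic resolution of $\Z$ and its reparametrisation by $\theta$, both of which are free (and so have vanishing Euler class), while their difference in $\wt K_0(\Z G)$ computes precisely $[(N, \psi_k(\theta))]$.

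By hypothesis, fix an exact sequence
\[ \mathscr{R} = (0 \to \Z \to F_{k-1} \to \cdots \to F_0 \to \Z \to 0) \]
of finitely generated free $\Z G$-modules. For $\theta \in \Aut(G)$, reparametrise each term to form $\mathscr{R}_\theta$; since $\Z_\theta = \Z$ and each $(F_i)_\theta$ is a free $\Z G$-module of the same rank as $F_i$ (the underlying abelian group and a free basis are preserved, only the action is twisted through the ring automorphism induced by $\theta$), the sequence $\mathscr{R}_\theta$ is again a free resolution of $\Z$. Consequently $e(\mathscr{R}) = e(\mathscr{R}_\theta) = 0 \in \wt K_0(\Z G)$.

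The substantive step is the identity $e(\mathscr{R}_\theta) - e(\mathscr{R}) = [(N, \psi_k(\theta))]$, which together with the two vanishings above forces $[(N, \psi_k(\theta))] = 0$ and hence $\psi_k(\theta) \in \SF(G)$ for every $\theta$, giving the desired containment $\Aut_k(G) \le \SF(G)$. To establish the identity, apply the pullback construction of \cref{construction:Swan} to $\mathscr{R}$ at the chain complex level: pull back the augmentation $F_0 \to \Z$ along multiplication by a representative of $\psi_k(\theta)^{-1}$, modifying only the bottom term to $F_0 \times_{\varepsilon, m} \Z \cong (N, \psi_k(\theta)) \oplus \Z G^s$ via the isomorphism invoked in the construction. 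A comparison-theorem argument then identifies this modified projective resolution, up to chain homotopy, with $\mathscr{R}_\theta$, using that both realise the same class in $H^k(G;\Z) \cong \Z/|G|$ (namely $\psi_k(\theta)$ times that of $\mathscr{R}$, by the very definition of $\psi_k$). The Swan summand picked up by the pullback accounts exactly for the Euler class difference.

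The main obstacle is the last identification: converting the algebraic equality of extension classes in $H^k(G;\Z)$ into a chain-level equivalence between the pullback complex and $\mathscr{R}_\theta$. This is essentially the content of \cite[Proposition 3.4 \& Section 6.2]{Ni20a}, as already invoked in the discussion following \cref{cor:theta-resolutions}. Once the identity $e(\mathscr{R}_\theta) - e(\mathscr{R}) = [(N, \psi_k(\theta))]$ is in place, the conclusion is a purely formal consequence of the freeness of both resolutions.
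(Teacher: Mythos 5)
The paper itself offers no proof of this proposition; it cites Dyer and Davis for the result. Your proof is correct, and it makes explicit the argument the paper implicitly acknowledges in the discussion following \cref{cor:theta-resolutions}: the identity $e(\mathscr{R}_\theta) - e(\mathscr{R}) = [(N,\psi_k(\theta))]$ from \cite[Proposition 3.4 \& Section 6.2]{Ni20a}, applied to the special case where $\mathscr{R}$ is a free periodic resolution. The logical skeleton is sound: $(\Z G)_\theta \cong \Z G$ as $\Z G$-modules (so reparametrisation preserves freeness), both $\mathscr{R}$ and $\mathscr{R}_\theta$ are free resolutions, their Euler classes vanish, and the identity forces $[(N,\psi_k(\theta))]=0$. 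Indeed the paper's Corollary~\ref{cor:theta-resolutions} is precisely the contrapositive restated: $[(N,\psi_k(\theta))]\ne 0$ forces $e(\mathscr{R}_\theta)\ne e(\mathscr{R})$ for every projective periodic $\mathscr{R}$.

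Two small points of precision in your sketch of the identity. First, when you say the pullback-modified resolution is "identified up to chain homotopy" with $\mathscr{R}_\theta$ on the grounds that both realise the same $k$-invariant, what one actually has is a \emph{stable} chain homotopy equivalence (their ranks need not agree), which is still enough to compare Euler classes. Second, after applying iterated Schanuel to two periodic resolutions of $\Z$ one obtains an isomorphism of the form $\Z\oplus(\text{alt.\ sum})\cong\Z\oplus(\text{alt.\ sum})$, from which one cannot cancel the non-projective $\Z$ summand; the fact that Euler class is determined by the $k$-invariant genuinely uses the structure of the Yoneda-equivalence zigzag, not merely Schanuel. You defer this to \cite{Ni20a}, which is where the paper also sources it, so this is fine — just worth flagging that the "comparison theorem argument" is doing real work and is not the elementary comparison theorem for projective resolutions.
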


\subsection{Computing $\IM(\psi_4)$ for the groups $Q(16,p,1)$} \label{ss:Aut-out}

Let $p$ be an odd prime and define $Q(16,p,1) = C_p \rtimes_{(-1,1)} Q_{16}$ where $Q_{16} = \langle x,y \mid x^8=y^2, yxy^{-1}=x^{-1}\rangle$, $C_p = \langle z \mid z^p=1 \rangle$ and $xzx^{-1}=z^{-1}$, $yzy^{-1}=z$.
Let $i_{C_p} : C_p \hookrightarrow Q(16,p,1)$ and $i_{Q_{16}} : Q_{16} \hookrightarrow Q(16,p,1)$ denote the standard inclusion maps and let $q : Q(16,p,1) \twoheadrightarrow Q_{16}$ denote the standard quotient map. 
From now on, we will let $G=Q(16,p,1)$ and will write $Q_{16} =\langle x,y \rangle$ and $C_p = \langle z \rangle$ to refer to the subgroups of $G$ just defined.

The aim of this section will be to establish the following. Throughout, we will write $\psi_4^G$ to refer to the map $\psi_4$ in the case of the group $G$.

\begin{thm} \label{thm:image-psi-4}
Let $p$ be an odd prime and let $G = Q(16,p,1)$. Then $\IM(\psi_4^G) = ((\Z/16p)^\times)^2$. 
\end{thm}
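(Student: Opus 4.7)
The plan is to exploit the Sylow decomposition $G = C_p \rtimes Q_{16}$, with coprime parts of orders $16$ and $p$, to split the computation of $\psi_4^G$ into two independent pieces. Since $C_p$ is characteristic in $G$, every $\theta \in \Aut(G)$ restricts to $\theta|_{C_p}: z \mapsto z^c$ for some $c \in (\Z/p)^\times$ and descends to $\bar\theta \in \Aut(G/C_p) \cong \Aut(Q_{16})$. Conversely, a pair $(c, \bar\theta)$ lifts to $\Aut(G)$ if and only if $\bar\theta$ preserves the homomorphism $\varphi: Q_{16} \to \{\pm 1\}$ given by $x \mapsto -1$, $y \mapsto 1$ (which encodes the defining action of $Q_{16}$ on $C_p$). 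A short argument with orders of elements shows these $\bar\theta$ are exactly those of the form $x \mapsto x^a$ with $a$ odd and $y \mapsto x^{2b} y$ with $b$ arbitrary.

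I would then apply the Lyndon--Hochschild--Serre spectral sequence to the extension $1 \to C_p \to G \to Q_{16} \to 1$. Since $\gcd(|C_p|,|Q_{16}|) = 1$ forces $H^s(Q_{16}; \Z/p) = 0$ for $s > 0$, the sequence collapses in low degree, yielding
\[
H^4(G;\Z) \;\cong\; H^4(Q_{16};\Z) \oplus H^4(C_p;\Z)^{Q_{16}} \;\cong\; \Z/16 \oplus \Z/p,
\]
with the $Q_{16}$-action on $H^4(C_p;\Z)$ trivial because it is the square of the action on $C_p$. Under the CRT isomorphism $(\Z/16p)^\times \cong (\Z/16)^\times \times (\Z/p)^\times$, naturality of LHS then gives
\[
\psi_4^G(\theta) \;=\; \bigl(\psi_4^{Q_{16}}(\bar\theta),\ \psi_4^{C_p}(\theta|_{C_p})\bigr).
\]

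The mod-$p$ factor is straightforward: $z \mapsto z^c$ acts on $H^2(C_p;\Z) = \Hom(C_p, \Q/\Z)$ by multiplication by $c$, and hence on $H^4$ by $c^2$, and every $c \in (\Z/p)^\times$ is realised by an automorphism of $G$ (fix $x, y$, send $z \mapsto z^c$). Hence the mod-$p$ projection of $\IM(\psi_4^G)$ is $((\Z/p)^\times)^2$. For the mod-$16$ factor, I would identify a generator of $H^4(Q_{16};\Z)$ with the second Chern class $c_2(\rho)$ of the faithful two-dimensional complex representation $\rho = \Ind_{\langle x\rangle}^{Q_{16}}\chi$ arising from the quaternionic embedding $Q_{16} \hookrightarrow \H^\times$. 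Composition with $\bar\theta$ gives $\rho \circ \bar\theta \cong \Ind \chi^a$, the Galois twist $\sigma_a(\rho)$, and a splitting-principle computation yields $c_2(\sigma_a(\rho)) = a^2 \cdot c_2(\rho)$. Therefore $\psi_4^{Q_{16}}(\bar\theta) = a^2 \bmod 16 \in \{1,9\}$, independently of $b$ (consistently, varying $b$ corresponds to composing with an inner automorphism of $Q_{16}$, which acts trivially on cohomology by \cref{lemma:out}). Both values $1, 9$ are realised, by $a = 1$ and $a = 3$ respectively.

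Combining, the automorphism $\theta$ with $\bar\theta(x) = x^a$, $\bar\theta(y) = y$, $\theta(z) = z^c$ realises the pair $(a^2, c^2) \in \{1,9\} \times ((\Z/p)^\times)^2$, with $a \in \{1,3\}$ and $c \in (\Z/p)^\times$ independent, giving $\IM(\psi_4^G) = ((\Z/16p)^\times)^2$. The main obstacle I anticipate is the Chern class computation: that the Galois twist $\sigma_a$ acts on $c_2(\rho) \in H^4(BQ_{16};\Z) = \Z/16$ by multiplication by $a^2$ needs care because $c_1$ of a faithful character of $\langle x\rangle$ has order only $8$ while $c_2(\rho)$ has order $16$. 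The formula $c_2(\sigma_a\rho) = a^2 c_2(\rho)$ is nevertheless unambiguous modulo $16$ since $(a+8)^2 \equiv a^2 \pmod{16}$; a rigorous proof could proceed either through Brauer induction in $K(BQ_{16})$ or by combining restriction to $\langle x\rangle$ with a separate argument pinning down the value in the kernel of $\Res : H^4(BQ_{16};\Z) \to H^4(BC_8;\Z)$.
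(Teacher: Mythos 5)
Your overall strategy is the same as the paper's: split $(\Z/16p)^\times \cong (\Z/16)^\times \times (\Z/p)^\times$ via the coprime Sylow pieces, show that $\psi_4^G$ decomposes compatibly into $\psi_4^{Q_{16}}$ and $\psi_4^{C_p}$, and compute each factor. Your determination of which automorphisms of $Q_{16}$ lift to $G$ (those with $x\mapsto x^a$, $y \mapsto x^{2b}y$) agrees with the paper's Lemma~\ref{lemma:Out(G)}, and your mod-$p$ computation ($\theta|_{C_p}$ acting by $c$ on $H^2$, hence $c^2$ on $H^4$) is correct and matches the paper's use of Swan's Proposition 8.1 together with Lemma~\ref{lemma:psi_k-properties}. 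The technical tools differ slightly: you use the Lyndon--Hochschild--Serre spectral sequence for $1 \to C_p \to G \to Q_{16} \to 1$ (which collapses by coprimality) where the paper uses the injectivity of restriction to Sylow subgroups from \cite[XII.10.1]{CE56} together with an order count (Lemma~\ref{lemma:h4-split}); both give the same splitting $H^4(G;\Z) \cong H^4(Q_{16};\Z)\oplus H^4(C_p;\Z)$, and your observation that $Q_{16}$ acts trivially on $H^4(C_p;\Z)$ (being the square of the sign action) is the correct reason the $E_2$-term is what you need.

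The one place where your argument is not complete is the mod-$16$ factor. You propose to prove $\psi_4^{Q_{16}}(\alpha_a) = a^2 \bmod 16$ by identifying a generator of $H^4(Q_{16};\Z) \cong \Z/16$ with $c_2(\rho)$ for the faithful $2$-dimensional representation $\rho = \Ind_{\langle x\rangle}^{Q_{16}}\chi$ and computing $c_2(\sigma_a\rho) = a^2 c_2(\rho)$ by the splitting principle. You yourself correctly flag the difficulty: a naive splitting-principle argument or restriction to $\langle x\rangle\cong C_8$ only controls $c_2$ modulo the kernel of $\Res : H^4(Q_{16};\Z) \to H^4(C_8;\Z)$, and since $H^4(C_8;\Z)\cong\Z/8$ while $H^4(Q_{16};\Z)\cong\Z/16$, the kernel has order $2$ and is not automatically handled. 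Your two suggested remedies (Brauer induction in $K(BQ_{16})$, or a separate analysis of the kernel of restriction) are plausible but are left as sketches, so as written there is a genuine gap at this point. The paper avoids this entirely by citing Swan's Proposition 8.2, which states directly that $\psi_4^{Q_{16}}(\alpha_{i,j}) = i^2$ (well-defined mod $16$ because $(i+8)^2 \equiv i^2$), so the mod-$16$ computation becomes a one-line citation. If you want to keep the Chern-class route self-contained, you would need to pin down the extra factor of $2$, e.g.\ by also restricting to the other index-$2$ subgroups $\langle x^2, y\rangle$ and $\langle x^2, xy\rangle$ and using that the total restriction map to the three maximal subgroups is injective.
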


Our strategy will be to relate $\psi_4^G$ to $\psi_4^{Q_{16}}$ and $\psi_4^{C_p}$.
First recall that, by \cref{lemma:out}, $\psi_4^G$ factors as $\psi_4^G : \Out(G) \to (\Z/|G|)^\times$ and similarly for $Q_{16}$ and $C_p$.

\begin{lemma} \label{lemma:Out(G)}
For $i \in (\Z/8)^\times$ and $j \in (\Z/p)^\times$, let $\theta_{i,j} : x \mapsto x^i, y \mapsto y, z \mapsto z^j$. This defines an automorphism $\theta_{i,j} \in \Aut(G)$. Then:
\[ \Out(G) = \{ [\theta_{i,j}] : i \in \{1,3\}, j \in (\Z/p)^\times\} \cong C_2 \times C_{p-1} \]
where $[\theta_{i,j}]$ denotes the image of $\theta_{i,j}$ in $\Out(G) = \Aut(G)/{\Inn(G)}$.
\end{lemma}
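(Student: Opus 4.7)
My plan is to build the homomorphism $\Phi\colon(\Z/8)^\times\times(\Z/p)^\times\to\Aut(G)$, $(i,j)\mapsto\theta_{i,j}$, compute its kernel modulo $\Inn(G)$, and then match its image with $\Out(G)$ by counting. The first step is a direct check that $\theta_{i,j}\in\Aut(G)$, which reduces to verifying the defining relations of $G$; all the nontrivial identities rely on $i$ being odd. For instance, the relation $y^2=x^4$ is preserved since $x^{4i}=x^4$ when $i$ is odd, and $xzx^{-1}=z^{-1}$ gives $\theta_{i,j}(x)\theta_{i,j}(z)\theta_{i,j}(x)^{-1}=x^iz^jx^{-i}=z^{(-1)^ij}=z^{-j}=\theta_{i,j}(z^{-1})$. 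Composition then yields $\theta_{i,j}\circ\theta_{i',j'}=\theta_{ii',jj'}$, so $\Phi$ is a group homomorphism.

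To identify which $\theta_{i,j}$ are inner, I compute $c_g$ for $g=x^ay^bz^c$. One gets $c_g(z)=z^{(-1)^a}$, so an inner $\theta_{i,j}$ needs $j\in\{\pm1\}$; the formulas for $c_g(x)$ and $c_g(y)$ then force $c=0$ and $a\in\{0,4\}$, leaving only $\theta_{1,1}=\id$ and $\theta_{-1,1}=c_y$ among the $\theta_{i,j}$. Thus the induced map $\overline{\Phi}\colon(\Z/8)^\times\times(\Z/p)^\times\to\Out(G)$ has kernel $\langle(-1,1)\rangle$, its image has order $2(p-1)$, and a set of representatives is $\{[\theta_{i,j}]:i\in\{1,3\},\ j\in(\Z/p)^\times\}$.

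Finally, I verify $|\Out(G)|=2(p-1)$, which forces $\overline{\Phi}$ to be surjective. A direct computation in the normal form $x^ay^bz^c$ shows $Z(G)=\langle x^4\rangle$, so $|\Inn(G)|=8p$. For $|\Aut(G)|$, $C_p$ is characteristic (the unique Sylow $p$-subgroup) and $Q_{16}$ is a Sylow $2$-subgroup; a short calculation yields $N_G(Q_{16})=Q_{16}$, and hence $n_2=p$. Any $\phi\in\Aut(G)$ can therefore be composed with an inner automorphism so that $\phi(Q_{16})=Q_{16}$, and $|\Aut(G)|=p\cdot|\Stab_{\Aut(G)}(Q_{16})|$. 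Elements of $\Aut(Q_{16})$ send $x\mapsto x^i$, $y\mapsto x^ky$ with $i\in(\Z/8)^\times$, $k\in\Z/8$, and compatibility with $yzy^{-1}=z$ forces $k$ even; this gives $|\Stab_{\Aut(G)}(Q_{16})|=16(p-1)$, hence $|\Out(G)|=2(p-1)$. The isomorphism $\Out(G)\cong C_2\times C_{p-1}$ then follows from $(\Z/8)^\times\cong C_2\times C_2$ with $\{\pm1\}$ as one of the $C_2$ factors. The main technical hurdle is this computation of $|\Aut(G)|$: it relies on the Sylow-theoretic reduction to $Q_{16}$-preserving automorphisms together with the compatibility constraint coming from the semidirect product action of $Q_{16}$ on $C_p$.
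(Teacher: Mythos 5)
Your proof is correct, but it takes a genuinely different route from the paper's. The paper determines $\Aut(G)$ explicitly: since $C_p \le G$ is characteristic there is an induced map $\Psi : \Aut(G) \to \Aut(Q_{16})$, and the paper computes $\ker(\Psi)$ and $\IM(\Psi)$ directly (the latter requires ruling out automorphisms restricting to $\alpha_{i,j}$ with $j$ odd), exhibits a splitting, writes $\Aut(G)$ as an explicit direct product of order $16p(p-1)$, and then lists $\Inn(G)$ explicitly. You instead avoid a full parametrisation of $\Aut(G)$: you use that $\Aut(G)$ permutes the $p$ Sylow $2$-subgroups transitively (since $\Inn(G)$ already does), so by orbit–stabiliser $\lvert\Aut(G)\rvert = p\cdot\lvert\Stab_{\Aut(G)}(Q_{16})\rvert$, and you count the stabiliser by identifying it with the pairs $(\alpha,\beta) \in \Aut(Q_{16})\times\Aut(C_p)$ that are compatible with the semidirect-product action; that compatibility constraint ($k$ even in $\alpha_{i,k}$) is precisely the paper's constraint on $\IM(\Psi)$ in disguise. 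Together with the explicit computation of $\IM(\Phi)\cap\Inn(G) = \{\theta_{1,1},\theta_{-1,1}\}$, the orders match and surjectivity of $\overline\Phi$ follows by counting. What your approach buys is that you never need to analyse $\ker(\Psi)$ or verify the splitting of $\Psi$; what it costs is the Sylow argument (including checking $N_G(Q_{16})=Q_{16}$) and a slightly less explicit description of $\Aut(G)$ itself (which the paper does not strictly need for the lemma, but does present). Both arguments lean on the same external input, namely the description of $\Aut(Q_{16})$. One small point of notation: the paper's displayed presentation of $Q_{16}$ reads $x^8=y^2$, but for the group of order $16$ the relation should be $x^4 = y^2$ (with $x$ of order $8$, $y$ of order $4$); your verification that $y^2 = x^4$ is preserved shows you are working with the intended group, so this does not affect your argument.
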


\begin{proof}
We will begin by showing that
    \begin{align*} &\Aut(G) = \{ \theta_{i,j,k,\ell} : x \mapsto z^kx^i, y \mapsto x^{2j}y, z \mapsto z^\ell \mid i \in (\Z/8)^\times, j \in \Z/4, k \in \Z/p, \ell \in (\Z/p)^\times \} \\
&= (\{ \theta_{1,0,k,1} : k \in \Z/p \} \rtimes \{ \theta_{1,0,0,\ell} : \ell \in (\Z/p)^\times \}) \times (\{ \theta_{1,j,0,1} : j \in \Z/4 \} \rtimes \{ \theta_{i,0,0,1} : i \in (\Z/8)^\times \}).
\end{align*} 
We can verify directly that $\theta_{i,j,k,\ell} \in \Aut(G)$ and that they form a subgroup as described.

First note $C_p =\langle z \rangle \le G$ contains all the elements of order $p$ in $G$ and so must be a characteristic subgroup. In particular, there is an induced map $\Psi: \Aut(G) \to \Aut(Q_{16})$, $\theta \mapsto \overline{\theta}$.
Let $\varphi \in \Ker(\Psi)$.
Since $\varphi(z)$ has order $p$, we must have $\varphi(z)=z^\ell$ for some $\ell$.
Since $\overline{\varphi(x)}=\overline{x} \in G/C_p$ and $\overline{\varphi(y)}=\overline{y} \in G/C_p$, we have that $\varphi(x)=z^kx$ and $\varphi(y)=z^ty$ for some $k, t \in \Z/p$. Since $y^4=1$, we have $(z^ty)^4 = \varphi(y^4)= 1$ but $(z^ty)^4=z^{4t}$ and so $z^{4t}=1$, which implies that $t=0$ since $z$ has order $p$. This implies that $\varphi = \theta_{1,0,k,\ell}$, and so $\ker(\Psi) = \{\theta_{1,0,k,\ell} \mid k \in \Z/p,\ell \in (\Z/p)^\times\}$.

By \cite[Proposition 8.2]{Sw60-II}, we have
$\Aut(Q_{16}) = \{ \alpha_{i,j} : x \mapsto x^i, y \mapsto x^jy \mid i \in (\Z/8)^\times, j \in \Z/8\}$.
Note that $\Psi(\{\theta_{i,j,0,1} : i \in (\Z/8)^\times, j \in \Z/4\}) = \{ \alpha_{i,2j} : i \in (\Z/8)^\times, j \in \Z/4\} \le \Aut(Q_{16})$.
We claim this is the image of $\Psi$. If not, then $\alpha_{i,j} \in \IM(\Psi)$ for some $j$ odd,which implies that $\alpha_{1,1} \in \IM(\Psi)$ by composition with automorphisms of the form $\theta_{i,j,0,1}$. Suppose $\varphi \in \Aut(G)$ is such that $\Psi(\varphi) = \alpha_{1,1}$. Then $\overline{\varphi(y)}=\overline{xy} \in G/C_p$ which implies that $\varphi(y)=z^txy$ for some $t \in \Z/p$ and, as usual, $\varphi(z)=z^\ell$ for some $\ell \in (\Z/p)^\times$. Since $yzy^{-1}=z$, we have $\varphi(y)\varphi(z)\varphi(y)^{-1}=\varphi(z)$ but $(z^txy)z^\ell(z^txy)^{-1} = z^{-\ell} \ne z^{\ell}$ since the fact that $z$ has order $p$ and $\ell \in (\Z/p)^\times$ implies that $z^{2\ell}\ne 1$. This is a contradiction and so $\IM(\Psi)$ is as required. 
Next note that the map $\alpha_{i,2j} \mapsto \theta_{i,j,0,1}$ gives a splitting of $\Psi$. Since the automorphisms $\theta_{i,j,0,1}$ and $\theta_{1,0,k,\ell}$ commute for all values of $i,j,k,\ell$, this implies that $\Aut(G)$ is a direct product of the subgroups $\{\theta_{1,0,k,\ell} : k \in \Z/p, \ell \in (\Z/p)^\times\}$ and $\{ \theta_{i,j,0,1} : i \in (\Z/8)^\times, j \in \Z/4\}$. The evaluation of $\Aut(G)$ now follows from the fact that $\theta_{i,j,k,\ell} = \theta_{i,j,0,1} \circ \theta_{1,0,k,\ell}$.

In order to determine $\Out(G)$, next note that
\[ \Inn(G) = \{ \theta_{i,j,k,1} \mid i \in \{\pm 1\}, j \in \Z/4, k \in \Z/p \} \cong C_p \times (C_4 \rtimes C_2). \]
This can be shown directly by noting that conjugation by a general element $x^iy^jz^k$ for some $i \in \Z/8$, $j \in \Z/2$, $k \in \Z/p$ gives an automorphism $\varphi : x \mapsto z^{2k(-1)^i}x^{(-1)^j}, y \mapsto x^{2i}y, z \mapsto z^{(-1)^i}$.
By quotienting, it now follows immediately that
$\Out(G) = \{ \overline{\theta}_{i,0,0,j} : i \in \{1,3\}, j \in (\Z/p)^\times\}$, where the automorphism $\theta_{i,0,0,j}$ coincides with $\theta_{i,j}$ in the statement.
\end{proof}

By \cite[Proposition 8.2]{Sw60-II}, we have that 
$\Aut(Q_{16}) = \{ \alpha_{i,j} : x \mapsto x^i, y \mapsto x^jy \mid i \in (\Z/8)^\times, j \in \Z/8\}$ and it is easy to show that $\Out(Q_{16}) = \{ [\alpha_{i,j}] : i \in \{1,3\}, j \in \{0, 1\}\}$. We have $\Out(C_p) = \Aut(C_p) = \{\beta_i : z \mapsto z^i \mid i \in (\Z/p)^\times\}$. With these choices of representatives, we can now define
\[ \Out(G) \to \Out(Q_{16}) \times \Out(C_p), \quad [\theta_{i,j}] \mapsto ([\theta_{i,j}\hspace{-.75mm}\mid_{Q_{16}}], [\theta_{i,j}\hspace{-.75mm}\mid_{C_p}]) = ([\alpha_{i,1}],[\beta_j]),\]
which is a group homomorphism.

\begin{lemma} \label{lemma:h4-split}
The standard inclusion maps induce an isomorphism 
\[ (i_{Q_{16}}^*,i_{C_p}^*) : H^4(G;\Z) \xrightarrow[]{\cong} H^4(Q_{16};\Z) \oplus H^4(C_p;\Z). \]
\end{lemma}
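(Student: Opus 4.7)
The plan is to decompose $H^4(G;\Z)$ by Sylow primes and apply the standard restriction--corestriction argument. Since $|G| = 16p$ with $p$ an odd prime, $Q_{16} \le G$ is a Sylow $2$-subgroup of index $p$ and $C_p \le G$ is a Sylow $p$-subgroup of index $16$. The group $G = Q(16,p,1)$ has $4$-periodic cohomology (as already used implicitly in \cref{thmx:automorphism}), so $H^4(G;\Z) \cong \Z/|G| = \Z/16p$, and likewise $H^4(Q_{16};\Z) \cong \Z/16$ and $H^4(C_p;\Z) \cong \Z/p$. In particular, both source and target of the claimed isomorphism have order $16p$, so it suffices to prove injectivity.

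Next, I would use that $H^4(G;\Z)$, being annihilated by $|G|$, splits canonically as $H^4(G;\Z)_{(2)} \oplus H^4(G;\Z)_{(p)}$ into its $2$-primary and $p$-primary components, of orders $16$ and $p$ respectively. The identity $\mathrm{cor}_H^G \circ \mathrm{res}_H^G = [G:H]$ on $H^*(G;\Z)$ then yields: taking $H = Q_{16}$, the composite $\mathrm{cor} \circ i_{Q_{16}}^*$ is multiplication by $p$, which is invertible on the $2$-primary part, so $i_{Q_{16}}^*$ is injective on $H^4(G;\Z)_{(2)}$; moreover $i_{Q_{16}}^*$ kills $H^4(G;\Z)_{(p)}$ automatically, since $H^4(Q_{16};\Z)$ is purely $2$-primary. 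Symmetrically, $i_{C_p}^*$ is injective on $H^4(G;\Z)_{(p)}$ and zero on $H^4(G;\Z)_{(2)}$.

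Combining these, if $\alpha = \alpha_{(2)} + \alpha_{(p)}$ lies in the kernel of $(i_{Q_{16}}^*, i_{C_p}^*)$, then $i_{Q_{16}}^*(\alpha) = i_{Q_{16}}^*(\alpha_{(2)}) = 0$ forces $\alpha_{(2)} = 0$, and symmetrically $\alpha_{(p)} = 0$, so $\alpha = 0$. Hence the map is injective, and the order count from the first paragraph upgrades this to an isomorphism. The argument is routine once the periodicity $H^4(G;\Z) \cong \Z/|G|$ is noted; I do not anticipate any genuine obstacle here, as the lemma is essentially a Chinese-remainder-type statement for group cohomology across Sylow subgroups of coprime order.
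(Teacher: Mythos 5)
Your proposal is correct and follows essentially the same route as the paper: decompose $H^4(G;\Z)$ into its $2$- and $p$-primary parts, observe that the restrictions to the Sylow subgroups are injective on the corresponding primary components, and then use the periodicity identifications $H^4(G;\Z)\cong\Z/16p$, $H^4(Q_{16};\Z)\cong\Z/16$, $H^4(C_p;\Z)\cong\Z/p$ to conclude by an order count. The only difference is that you spell out the restriction--corestriction ($\mathrm{cor}\circ\mathrm{res}=[G:H]$) argument for injectivity on primary parts, whereas the paper cites this standard fact from Cartan--Eilenberg XII.10.1.
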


\begin{proof}
Since $|G| = 16p$, there is a primary decomposition $H^4(G;\Z) \cong H^4(G;\Z)_{(2)} \oplus H^4(G;\Z)_{(p)}$. The maps $i_{Q_{16}}^*$ and $i_{C_p}^*$ above factor through the induced maps $i_{Q_{16}}^* : H^4(G;\Z)_{(2)} \to H^4(Q_{16};\Z)$ and $i_{C_p}^* : H^4(G;\Z)_{(p)} \to H^4(C_p;\Z)$. By \cite[XII.10.1]{CE56}, these maps are injective since $Q_{16}$ is a Sylow $2$-subgroup and $C_p$ is a Sylow $p$-subgroup. It follows that $(i_{Q_{16}}^*,i_{C_p}^*)$ is injective.

Since $G$, $Q_{16}$ and $C_p$ all have 4-periodic cohomology, we have that $H^4(G;\Z) \cong \Z/16p$, $H^4(Q_{16};\Z) \cong \Z/16$ and $H^4(C_p;\Z) \cong \Z/p$ \cite[XII.11.1]{CE56}. So $(i_{Q_{16}}^*,i_{C_p}^*)$ is an injective map between finite groups of order $16p$, and hence is bijective.  
\end{proof}

Since $H^4(G;\Z) \cong \Z/|G|$ is cyclic, every subgroup is characteristic and so the surjective maps $i^*_{Q_{16}}$ and $i^*_{C_p}$ above induce maps on the automorphism groups. That is, there is an induced map
\[ (r_{Q_{16}},r_{C_p}) : \Aut(H^4(G;\Z)) \to \Aut(H^4(Q_{16};\Z)) \times \Aut(H^4(C_p;\Z)). \]

The following relates the two maps just defined with $\psi_4^G$, $\psi_4^{Q_{16}}$ and $\psi_4^{C_p}$.

\begin{lemma} \label{lemma:psi4-split}
There is a commutative diagram
\[
\begin{tikzcd}[column sep =20mm]
\Out(G) \ar[d,"\psi_4^G"] \ar[r,"\text{$[\theta_{i,j}] \mapsto ([\theta_{i,j}\hspace{-.75mm}\mid_{Q_{16}}],[\theta_{i,j}\hspace{-.75mm}\mid_{C_p}])$}"] & \Out(Q_{16}) \times \Out(C_p) \ar[d," \psi_4^{Q_{16}} \times \psi_4^{C_p}"] \\
\underbrace{\Aut(H^4(G;\Z))}_{\cong (\Z/16p)^\times} 
\ar[r,"\text{$(r_{Q_{16}},r_{C_p})$}"] & \underbrace{\Aut(H^4(Q_{16};\Z))}_{\cong (\Z/16)^\times} \times \underbrace{\Aut(H^4(C_p;\Z))}_{\cong (\Z/p)^\times}
\end{tikzcd}
\]
\end{lemma}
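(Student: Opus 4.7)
The plan is to derive the diagram from the naturality of $H^4(-;\Z)$ applied to the inclusions $i_{Q_{16}}$ and $i_{C_p}$, together with the definitions of $\psi_4$ and the maps $r_{Q_{16}}, r_{C_p}$.

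First I would observe that each representative $\theta_{i,j}\colon x \mapsto x^i,\, y \mapsto y,\, z \mapsto z^j$ preserves both subgroups $Q_{16} = \langle x,y\rangle$ and $C_p = \langle z\rangle$, which is immediate from the formula. Hence $\theta_{i,j}$ restricts to automorphisms $\theta_{i,j}|_H \in \Aut(H)$ satisfying $\theta_{i,j} \circ i_H = i_H \circ (\theta_{i,j}|_H)$ for $H \in \{Q_{16}, C_p\}$. Applying the contravariant functor $H^4(-;\Z)$ yields, for each such $H$, a commutative naturality square
\[
\begin{tikzcd}
H^4(G;\Z) \ar[r, "i_H^*"] \ar[d, "\theta_{i,j}^*"'] & H^4(H;\Z) \ar[d, "(\theta_{i,j}|_H)^*"] \\
H^4(G;\Z) \ar[r, "i_H^*"] & H^4(H;\Z).
\end{tikzcd}
\]

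Next I would unpack what each arrow encodes under the canonical identifications $H^4(G;\Z) \cong \Z/16p$, $H^4(Q_{16};\Z) \cong \Z/16$ and $H^4(C_p;\Z) \cong \Z/p$. By the definition of $\psi_4$, the vertical maps $\theta_{i,j}^*$ and $(\theta_{i,j}|_H)^*$ correspond to the elements $\psi_4^G([\theta_{i,j}])$ and $\psi_4^H([\theta_{i,j}|_H])$ of the respective groups of units, where passage to $\Out$ is justified by \cref{lemma:out}. Meanwhile, $r_H$ was defined as the unique automorphism satisfying $r_H(\varphi) \circ i_H^* = i_H^* \circ \varphi$ for any $\varphi \in \Aut(H^4(G;\Z))$ (which exists since $\ker(i_H^*)$ is characteristic). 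Thus the commutativity of the square above translates directly to the identity $r_H(\psi_4^G([\theta_{i,j}])) = \psi_4^H([\theta_{i,j}|_H])$.

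Finally, I would combine the two identities for $H = Q_{16}$ and $H = C_p$. Under the isomorphism $(i_{Q_{16}}^*, i_{C_p}^*)$ from \cref{lemma:h4-split}, the pair $(r_{Q_{16}}, r_{C_p})$ corresponds to the canonical reduction $(\Z/16p)^\times \to (\Z/16)^\times \times (\Z/p)^\times$. Assembling everything gives exactly the commutative diagram in the statement. The argument is entirely formal; the only thing to verify carefully is the compatibility of the two naturality squares with the splitting from \cref{lemma:h4-split}, and there is no genuine obstacle here.
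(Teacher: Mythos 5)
Your proof is correct and takes essentially the same route as the paper: both arguments rest on the naturality square $(\theta_{i,j}|_H)^* \circ i_H^* = i_H^* \circ \theta_{i,j}^*$ for $H = Q_{16}, C_p$, combined with the defining property of $r_H$ and Lemma~\ref{lemma:out} to descend to $\Out$.
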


\begin{proof}
Let $[\theta_{i,j}] \in \Out(G)$ where $i \in \{1,3\}$, $j \in (\Z/p)^\times$. It suffices to check that $r_{Q_{16}}(\theta_{i,j}^*) = (\theta_{i,j}\hspace{-.75mm}\mid_{Q_{16}})^*$ and $r_{C_p}(\theta^*) = (\theta_{i,j}\hspace{-.75mm}\mid_{C_p})^*$.
For each $\varphi \in \Aut(H^4(G;\Z))$, the values of $r_{Q_{16}}$ and $r_{C_p}$ are uniquely determined by the fact that $r_{Q_{16}}(\varphi) \circ i_{Q_{16}}^* = i_{Q_{16}}^* \circ \varphi$ and $r_{C_p}(\varphi) \circ i_{C_p}^* = i_{C_p}^* \circ \varphi$ respectively.
For each subgroup $H = C_p$ or $Q_{16}$, we have $\theta_{i,j} \circ i_H = i_H \circ (\theta_{i,j}\hspace{-.75mm}\mid_{H})$. This implies that $(\theta_{i,j}\hspace{-.75mm}\mid_{H})^* \circ i_H^* = i_H^* \circ \theta_{i,j}^*$ for $H = C_p$ or $Q_{16}$, as required.
\end{proof}

\begin{proof}[Proof of \cref{thm:image-psi-4}]
Note that we can choose the identifications $\Aut(H^4(G;\Z)) \cong (\Z/16p)^\times$, $\Aut(H^4(Q_{16};\Z)) \cong (\Z/16)^\times$ and $\Aut(H^4(C_p;\Z))\cong (\Z/p)^\times$ such that $(r_{Q_{16}},r_{C_p})$ is the standard reduction isomorphism $r : (\Z/16p)^\times \xrightarrow[]{\cong} (\Z/16)^\times  \times (\Z/p)^\times$.
It now follows from \cref{lemma:psi4-split} that 
\[ r(\IM(\psi_4^G)) = \{ (\psi_4^{Q_{16}}(\theta_{i,j}\hspace{-.75mm}\mid_{Q_{16}}), \psi_4^{C_p}(\theta_{i,j}\hspace{-.75mm}\mid_{C_p})) : i \in \{1,3\}, j \in (\Z/p)^\times\} \le \IM(\psi_4^{Q_{16}}) \times \IM(\psi_4^{C_p}) \]
and we have that $\theta_{i,j}\hspace{-.75mm}\mid_{Q_{16}} = \alpha_{i,1}$ and $\theta_{i,j}\hspace{-.75mm}\mid_{C_p} = \beta_j$, in the notation defined above.

By \cite[Proposition 8.2]{Sw60-II}, we have that $\psi_4^{Q_{16}}(\alpha_{i,j}) = i^2$. This is well-defined mod $16$ since $(i+8)^2 = i^2 + 16i+64 \equiv i^2 \mod 16$. 
By \cite[Proposition 8.1]{Sw60-II}, we have that
$\psi_2^{C_p}(\beta_j) = j$  and so $\psi_4^{C_p}(\beta_j) = (\psi_2^{C_p}(\theta_j))^2 = j^2$ by \cref{lemma:psi_k-properties}.
Thus we have 
\[ r(\IM(\psi_4^G)) = \{(i^2,j^2) : i \in \{1,3\}, j \in (\Z/p)^\times\} =  ((\Z/16)^\times)^2 \times ((\Z/p)^\times)^2 \]
and so $\IM(\psi_4^G) = ((\Z/16p)^\times)^2$, as required.
\end{proof}

\subsection{Proof of \cref{thmx:automorphism}} \label{ss:Aut-proof}

Let $k \geq 1$ with $k \equiv 4 \bmod 8$, so that $k = 4i$ with $i \ge 1$ odd. 
Let $p = 5$ and let $G = Q(16,5,1)$.  By~\cref{thm:image-psi-4} we have
$\image(\psi_4) = ((\Z/80)^\times)^2 = \{1, 9, 41, 49\}.$
By \cref{lemma:psi_k-properties} we obtain
$\image(\psi_{4i}) = \image(\psi_4)^i = \image(\psi_4)$,
where the last equality follows from the fact that the order of $((\Z/80)^\times)^2 \cong \Z/2 \times \Z/2$ is coprime to $i$. It follows that, for all $k$, there exists $\theta \in \Aut(G)$ such that $\psi_k(\theta)=9$.

To finish the proof we will show that the Swan module $(N, 9)$ is not stably free.
To this end we consider the reduced projective class group $\wt K_0(\Z G)$ of $\Z G$.
For a finitely generated projective $\Z G$-module $X$, the class $[X] \in \wt K_0(\Z G)$ is trivial if and only if $X$ is stably free.
Thus the claim that the Swan module $(N, 9)$ is not stably free is equivalent to $[(N, 9)] \neq 0$.
Due to work of Bley--Boltje~\cite{MR2282916}, there exists an algorithm for determining the structure of $\tilde K_0(\Z G)$ as an abelian group, that is, $d_1,\dotsc,d_n \in \Z$ such that $\wt K_0(\Z G) \cong A$, where $A = \Z/d_1\Z \times \dotsb \times \Z/d_n\Z$. An explicit isomorphism $\wt K_0(\Z G) \to A$ is constructed in Bley--Wilson~\cite{MR2564571}, which can be used to decide whether a given finitely generated projective $\Z G$-module is stably free.
Both algorithms have been implemented in \textsc{Magma}, which can be used to show that
$\wt K_0(\Z G) \cong (\Z/2)^3 \times (\Z/4)$
and to check that $[(N, 9)] \neq 0$. See~\cref{appendix:magma} for details.
The computations have also been verified with an independent implementation using \textsc{Oscar}\cite{Hecke,OSCAR-book}.

\begin{remark}
We also used \cref{thm:image-psi-4} and \textsc{Magma} computations to analyse $\image(\psi_4^G)$ of $G=Q(16,p,1)$ for $p \leq 19$.
We show that, for $p \in \{ 3, 7, 11, 19 \}$, the Swan module $(N, \psi_4(\theta))$ is stably free for all $\theta \in \Aut(G)$.
For $p \in \{13, 17\}$, the groups $Q(16, p, 1)$ give further examples for \cref{thmx:automorphism} for all $k$ where in both cases one can take the Swan module $(N, 25)$. Note that unlike for $p \in \{5, 17\}$, in the case $p = 13$ one has $\image(\psi_4^G) \subsetneq \image(\psi_k^G)$ for all $k > 4$ with $k \equiv 4 \bmod 8$ and $k \equiv 0 \bmod 3$.
\end{remark}

\appendix

\section{Heuristic computational exploration} \label{appendix:heuristics}

The proof of \cref{thmx:SF-vs-F} shows that $Q_{56}$ does have not have the weak cancellation property, i.e. 
there exists a non-free stably free Swan module over $\Z Q_{56}$.
This group was identified as a potential candidate after extensive computational exploration, which we sketch in this section.

Let $G$ be a finite group.
Then $G$ has the weak cancellation property if and only if $\FF(G) = \SF(G)$.
Thus, to evaluate whether a finite group $G$ is a candidate for a group without the weak cancellation property, it suffices to determine $\FF(G)$ and $\SF(G)$.

The set
\[ \SF(G) = \ker(S_G) = \{ r \in (\Z/|G|)^\times : [(N,r)] = 0 \} \]
can be found algorithmically as
in \Cref{ss:Aut-proof}; see also \Cref{appendix:magma}.
This uses an algorithm of Bley--Wilson, which is quite efficient in practice and can easily handle groups up to order $100$.

For determining 
\[ \FF(G) = \{ r \in (\Z/|G|)^\times : (N,r) \cong \Z G\} \] 
the situation is more complicated.
Although by work of Bley, Johnston and the first named author~\cite{MR2422318,MR2813368,MR4136552}, there exist algorithms to decide whether a $\Z G$-module is free, for large group orders it quickly becomes impractical, rendering it useful only for groups of order $\leq 20$.
To overcome these limitations, we developed the following \textit{heuristic} version of the aforementioned algorithm.

\begin{customtheorem}{Algorithm}[Heuristic version of Bley--Johnston] \label{alg}
Let $\l = \Z G$ for a finite group $G$.
Suppose that $I \subseteq \l$ is a locally free $\l$-submodule of rank one.
Given $0 \leq \varepsilon \le 1$, the following steps either prove that $I$ is free or provide heuristic evidence that $I$ is non-free.
\begin{clist}{(1)}
\item
Compute a maximal order $\Gamma$ containing $\l$.
\item 
Determine whether $\Gamma I$ is a free $\Gamma$-module (see {\cite[\S{}5]{MR2813368}}).
If not, then $I$ is a non-free $\Lambda$-module and we terminate the algorithm.
\item 
Compute $\beta \in \Gamma$ such that $\Gamma I = \Gamma \beta$.
\item
Determine a finite set $S \subseteq \Gamma^\times$, such that $I$ is a free $\Lambda$-module if and only if there exists $u \in S$ such that $u\beta \in \Lambda$ (see {\cite[\S{}7]{MR2813368}}).
\item \label{item:heuristic}
Choose a random subset $S' \subseteq S$ of size $\lfloor \varepsilon \cdot |S| \rfloor$ and check whether $u \beta \not\in I$ for all $u \in S'$.
If this does not hold for some $u \in S'$, then $I$ is a non-free $\Lambda$-module and we terminate the algorithm. If this holds for all $u \in S'$, then we have heuristic evidence that $I$ is non-free.
\end{clist}
\end{customtheorem}

\begin{remark}
Note that the algorithm of Bley--Johnston coincides with the case where $\varepsilon = 1$, which corresponds to the case where $S = S'$ in step \cref{item:heuristic}. That is, if $u\beta \not \in I$ for all $u \in S$, then $I$ is non-free. 
This full algorithm is not feasible except in the case of groups of small order since heuristically the cardinality of $S$ grows super-exponentially with the group order.
\end{remark}

Applying this to the set of Swan modules $(N, r)$, $r \in (\Z/|G|)^\times$, we obtain a subset ${\FF'(G)} \subseteq \FF(G)$ which is dependent on the choice of $\varepsilon$ and $S'$. Since $\FF(G)$ is a subgroup of $(\Z/|G|)^\times$, we can additionally check that the computed subset $\FF'(G)$ is itself a group, i.e. that it is closed under multiplication. If so, this provides some heuristic evidence that $\FF'(G) = \FF(G)$.

We determined $\SF(G)$ and ${\FF'(G)}$ for quaternion groups $Q_{4n}$, where $2 \leq n \leq 19$, $n \not\in \{15,18\}$, and where we chose $\varepsilon$ in each case to be as large as possible provided the computations can finish in less than two months.
The computations were performed on a single CPU core of a server equipped with an Intel Xeon 6226R processor (2.90 GHz) and 768 GB of RAM.
The result is presented in \Cref{table:experiment}.
For $n \ne 14$, we obtained $\FF'(Q_{4n}) = \SF(Q_{4n})$. Since $\FF'(Q_{4n}) \subseteq \FF(Q_{4n}) \subseteq \SF(Q_{4n})$, this implies that $\FF'(Q_{4n}) = \FF(Q_{4n})$ and so the heuristic was accurate in all these cases.
For $n = 14$, we found that $\FF'(Q_{56}) \subsetneq \SF(Q_{56})$ and verified that $\FF'(Q_{56})$ is closed under multiplication. Since $15 \in \SF(Q_{56}) \setminus \FF'(Q_{56})$, this singled out $(N,15)$ as a potential candidate for a non-free stably free $\Z Q_{56}$-module.

\begin{table}[h]
\centering
\begin{tabular}{|c|P{11.5cm}|c|} 
\hline
$n$ & $\SF(Q_{4n})$ & $\FF'(Q_{4n})$ \\
\hline
$2$ & $\{1,7\}$ & $\SF(Q_{4n})$ \\
$3$ & $\{1,5, 7, 11\}$ & $\SF(Q_{4n})$ \\
$4$ & $\{1,7, 9, 15 \}$ & $\SF(Q_{4n})$ \\
$5$ & $\{1, 3, 7, 9, 11, 13, 17, 19\}$ & $\SF(Q_{4n})$ \\
$6$ & $\{1,23\}$ & $\SF(Q_{4n})$ \\
$7$ & $\{1, 3, 5, 9, 11, 13, 15, 17, 19, 23, 25, 27\}$ & $\SF(Q_{4n})$ \\
$8$ & $\{1, 7, 9, 15, 17, 23, 25, 31\}$ & $\SF(Q_{4n})$ \\
$9$ & $\{1, 5, 7, 11, 13, 17, 19, 23, 25, 29, 31, 35\}$ & $\SF(Q_{4n})$ \\
$10$ & $\{1, 9, 31, 39\}$ & $\SF(Q_{4n})$ \\
$11$ & $\{1, 3, 5, 7, 9, 13, 15, 17, 19, 21, 23, 25, 27, 29, 31, 35, 37, 39, 41, 43\}$ & $\SF(Q_{4n})$ \\
$12$ & $\{1, 23, 25, 47\}$ & $\SF(Q_{4n})$ \\
$13$ & $\{$1, 3, 5, 7, 9, 11, 15, 17, 19,
21, 23, 25, 27, 29, 31, 33, 35, 37, 41, 43, 45, 47, 49, 51$\}$
 & $\SF(Q_{4n})$ \\
$14$ & $\{1, 9, 15, 17, 23, 25, 31, 33, 39, 41, 47, 55\}$ & $\{1, 9, 25, 31, 47, 55\}$  \\
$16$ & $\{1, 7, 9, 15, 17, 23, 25, 31, 33, 39, 41, 47, 49, 55, 57, 63\}$ & $\SF(Q_{4n})$ \\
$17$ & $\{$1, 3, 5, 7, 9, 11, 13, 15, 19, 21, 23, 25, 27, 29, 31, 33, 35, 37, 39, 41, 43, 45, 47, 49, 53, 55, 57, 59, 61, 63, 65, 67$\}$ & $\SF(Q_{4n})$ \\
$19$ & $\{$1, 3, 5, 7, 9, 11, 13, 15, 17, 21, 23, 25, 27, 29, 31, 33, 35, 37, 39, 41, 43, 45, 47, 49, 51, 53, 55, 59, 61, 63, 65, 67, 69, 71, 73, 75$\}$ & $\SF(Q_{4n})$ \\
\hline
\end{tabular}
\vspace{2mm}
\caption{Stably free and free Swan modules for small quaternion groups.}
\vspace{-2mm}
\label{table:experiment}
\end{table}

For $n \in \{15,18\}$ or $n\geq 20$, steps (1)--(3) of the algorithm did not terminate within two months, and thus it was not possible to obtain any heuristic evidence for whether stably free Swan modules are free. 
Indeed, the runtime of the Bley--Johnston algorithm depends not only on the order of the group $G$, but also on the structure of the simple components of the group algebra $\Q G$.
However, the precise relationship has not yet been established.

While these computations do not constitute a proof due to the heuristic nature of the algorithm, the group $Q_{56}$ is identified as a candidate for a group without the weak cancellation property. In fact, the computations suggest that this might in fact be the smallest quaternion group with this property.

\section{\textsc{Magma} computations}\label{appendix:magma}

The following example was obtained using \textsc{Magma} version 2.25-6.
The necessary \textsc{Magma} files are available from
\begin{center}
    \url{https://www.mathematik.uni-muenchen.de/~bley/pub.php}
\end{center}
The following lines construct the class $X$ of the Swan module $(N, r)$ of $Q(16,p,1)$ and check whether it is non-zero.

\begin{verbatim}
Attach("RelAlgKTheory.m");
Attach("INB.m");
p := 5; r := 9;
Q := FPGroup< x, y |  y*x^p*y*x^-p, x*y*x*y^-3>; // Q(16,p,1)
G := RegularRepresentation(Q, sub<Q | >);
G := Codomain(G); // Q(16,p,1) as a permutation group
cl := LocallyFreeClassgroup(G);
QG := cl`QG;
rho := RegularRep(QG);
S := SwanModule(Integers()!r, rho);
X := ClassGroupLog(cl, S);
IsIdentity(X);
\end{verbatim}

\bibliography{stablyfree.bib}
\bibliographystyle{amsalpha}

\end{document}